%Format: latex                                                                               
%Time and Date of last revision:   6 July 2018
\documentclass[12pt,titlepage]{article} 
\usepackage{amsmath}                                                                         
\usepackage{amsfonts,amssymb,amsthm}                                                         
\usepackage[all]{xy}

\newtheorem{theorem}{Theorem}[section]
\newtheorem{lemma}[theorem]{Lemma}          
\newtheorem{proposition}[theorem]{Proposition}

\theoremstyle{definition}
\newtheorem{definition}[theorem]{Definition}
\newtheorem{example}[theorem]{Example}

\theoremstyle{remark}                                                                        
\newtheorem{remark}[theorem]{Remark}    

\numberwithin{equation}{section}                                                           

\setlength{\evensidemargin}{0in}                                                             
\setlength{\oddsidemargin}{0in}                                                              
\setlength{\topmargin}{-0.5in}                                                    
\setlength{\textheight}{9.0in}                                                               
\setlength{\textwidth}{6.5in}                                                                
%symbols--three letter abbreviations for frequently-used symbols                            
\newcommand{\com}{\circ} %composition of functions                                           
\newcommand{\iso}{\cong} %isomorphism symbol                                                 
\newcommand{\heq}{\simeq} %homotopy equivalence symbol                                       
 %tensor product symbol                                            
\newcommand{\dsm}{\oplus} %direct sum symbol    
 %left half smash product symbol, as used by Waldhausen    
 %right half smash product symbol
 %twisted half smash product symbol, in amssymb                    
\newcommand{\sma}{\wedge_{\mu}} %smash product symbol   
\newcommand{\esm}{\wedge_e} %external smash product of retractive spaces                                           
\newcommand{\wed}{\vee} %wedge sum symbol 
\newcommand{\wdX}{\vee_{X}} %union along X
\newcommand{\btn}{\boxtimes} %box tensor product symbol                           
\newcommand{\dop}{\diamond} %diamond operation symbol
\newcommand{\gro}{ \mspace{-2mu} \textstyle{\int} \mspace{-2mu} }
%more symbols                                                                                
\newcommand{\id}{\ensuremath{{\rm id}}} 
                                                     
\newcommand{\Loops}{\Omega}

\newcommand{\Cat}{{\rm Cat}}                                                        
%frequently used letters, bold

\newcommand{\bN}{{\mathbf N}}                                                                  
\newcommand{\bZ}{{\mathbf Z}}
\newcommand{\bR}{{\mathbf R}}
%frequently used letters, script
\newcommand{\cC}{{\mathcal C}} 
\newcommand{\cD}{{\mathcal D}}
\newcommand{\cF}{{\mathcal F}}
\newcommand{\cR}{{\mathcal R}}
\newcommand{\cT}{{\mathcal T}}
   
%finite sets
\newcommand{\fset}[1]{\ensuremath{\mathbf{#1}}}                   
%operators    
\providecommand{\abs}[1]{\lvert#1\rvert}

\DeclareMathOperator{\Iso}{Iso}
\DeclareMathOperator*{\colim}{colim}                                                         
                                                         
\DeclareMathOperator*{\hocolim}{hocolim}                                                     
                                                          
\DeclareMathOperator{\fib}{fiber}

\DeclareMathOperator{\im}{Image}
\DeclareMathOperator{\sub}{sub}
\DeclareMathOperator{\Ind}{Ind}

\DeclareMathOperator{\Exact}{Exact}
\DeclareMathOperator{\Ob}{Ob}
\DeclareMathOperator{\rel}{rel}
%arrow commands                                                                              
\newcommand{\la}{\leftarrow}                                                                 
\newcommand{\lla}{\longleftarrow}                                                            
\newcommand{\ra}{\rightarrow}                                                                
\newcommand{\lra}{\longrightarrow}                                                           
%xypic additions
%newdir for cofibration arrow                                                          
%\newdir{ >}{{}*!/-5pt/\dir{>}}   
\newdir{ >}{{}*!/-1ex/\dir{>}}    
%use computer modern tips in xy pic
\newcommand{\UCMT}{\SelectTips{cm}{}}
\begin{document}                                                                             
\title{Segal Operations in the Algebraic $K$-theory\\of Topological Spaces}
\author{
Thomas Gunnarsson 
\thanks{We thank Friedhelm Waldhausen for suggesting this problem, and we thank
the Institut f\"{u}r Mathematik of the University of Osnabr\"{u}ck for its hospitality
and Institute faculty members for their interest and encouragement.}
\\
Department of Mathematics
\\
Lule\aa\ University of Technology
\\
97187 Lule\aa, Sweden
\and 
Ross Staffeldt 
\\Department of Mathematical Sciences\\
New Mexico State University\\
Las Cruces, NM 88003 USA}
\maketitle
\begin{abstract}
We extend earlier work of Waldhausen which defines operations 
on the algebraic $K$-theory of the one-point space.
For a connected simplicial abelian group $X$ and symmetric groups $\Sigma_n$, we  define
operations $\theta^n \colon A(X) \ra A(X{\times}B\Sigma_n)$ in the algebraic $K$-theory
of spaces.
We show that our operations can be given the structure of $E_{\infty}$-maps. 
Let $\phi_n \colon  A(X{\times}B\Sigma_n) \ra A(X{\times}E\Sigma_n) \simeq A(X)$
be the $\Sigma_n$-transfer.
We also develop an inductive procedure to compute the compositions
$\phi_n \com \theta^n$, and outline some applications. 
\end{abstract}
\section{Introduction} 
  \label{Introduction}
Let $X$ be a connected simplicial abelian group, let $\Sigma_n$ be the symmetric group on $n$ letters, 
and let $B\Sigma_n$ be the classifying space.  
Our goal is to define a family of Segal operations
\begin{equation*}
\theta^n \colon A(X) \lra A(X \times B\Sigma_n)
\end{equation*}
satisfying the properties listed in theorems \ref{mainthm1} and \ref{gencalcintro} below. 
We follow Waldhausen \cite{Waldhausen82} in our naming convention,
which can be explained as follows. 
Around 1972 Graeme Segal \cite{Segalops} defined a set of operations
in stable homotopy theory
$\theta^n \colon \pi^s_i(S^0) \lra \pi^s_i\bigl( (B{\Sigma_n})_+ \bigr)$, verified certain
properties and used the
information to give a proof of the Kahn-Priddy theorem.  
The key to the Kahn-Priddy proof is a certain relation 
satisfied by the composition of an operation followed by a transfer homomorphim. 

Waldhausen \cite{Waldhausen82} adapted the construction in 
\cite{Segalops} to define operations 
 $\theta^n \colon A(*) \lra A(B\Sigma_n)$ and
 proved these new operations have properties precisely analogous to fundamental properties
of Segal's original operations.  Consequently, Waldhausen used the same
notation and called the operations ``Segal operations.'' 

We obtain the following result.
\begin{theorem}\label{mainthm1}
For a connected simplicial abelian group $X$, there are maps $\theta^n \colon A(X) \ra A(X \times B\Sigma_n)$ which have the
following properties.
\begin{enumerate}
\item The map $\theta^1$ is the identity.
\item The combined map
\begin{equation*}
\theta = \prod_{n \geq 1} \theta^n \colon A(X) \lra \{ 1 \} \times \prod_{n \geq 1} A(X \times B\Sigma_n)
\end{equation*}
has the structure of an $E_{\infty}$-map if the target is equipped with the 
$E_{\infty}$-structure arising from certain pairings
$A(X \times B\Sigma_m) \times A(X \times B\Sigma_n) \ra A(X \times B\Sigma_{n+m})$
derived from the box-tensor operation of definition \ref{boxtensor}.
%discussed following definition \ref{SeOpmaps}.
%defined in \ref{compositionlaws}.
\end{enumerate}
\end{theorem}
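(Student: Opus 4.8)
The plan is to build both operations from a single geometric construction and then to show that this construction is multiplicative in a coherent, rather than merely homotopy-theoretic, sense. First I would fix a Waldhausen-category model $\cR(X)$ for $A(X)$, say homotopy-finite retractive spaces over $X$, and realize $\theta^n$ as the composite of the fiberwise $n$-fold smash power $Y\mapsto Y^{\sma n}$ (the smash taken over the multiplication $\mu\colon X^n\ra X$ of the simplicial abelian group, which is what makes $Y^{\sma n}$ a retractive space over $X$ rather than over $X^n$) with the passage to homotopy $\Sigma_n$-orbits $E\Sigma_n\thp_{\Sigma_n}(-)$, landing over $X\times B\Sigma_n$. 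Property (1) is then immediate: for $n=1$ the power is the identity functor, $\Sigma_1$ is trivial and $B\Sigma_1\heq *$, so $\theta^1$ is the identity after the evident identification $A(X\times B\Sigma_1)\heq A(X)$.

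The substance is part (2). Here the source carries its standard additive infinite-loop structure from the $wS_\bullet$-construction, while the target carries the graded-multiplicative structure assembled from the pairings of \ref{compositionlaws}, induced by the block inclusions $\Sigma_i\times\Sigma_j\hookrightarrow\Sigma_{i+j}$ together with $\mu$. The algebraic identity underlying the asserted $E_{\infty}$-map is the exponential law $\theta^n(Y\wdX Z)\heq\sum_{i+j=n}\theta^i(Y)\cdot\theta^j(Z)$, with $\theta^0$ the unit occupying the $\{1\}$-factor of the target. I would derive this from the $\Sigma_n$-equivariant splitting
\begin{equation*}
(Y\wdX Z)^{\sma n}\;\iso\;\bigvee_{i+j=n}\Sigma_n\thp_{\Sigma_i\times\Sigma_j}\bigl(Y^{\sma i}\sma Z^{\sma j}\bigr),
\end{equation*}
which records how the $n$ smash factors of a fiberwise coproduct distribute between the two summands; applying $E\Sigma_n\thp_{\Sigma_n}(-)$ and comparing $E\Sigma_n$ with a model for $E(\Sigma_i\times\Sigma_j)$ turns each summand into exactly one pairing term $\theta^i(Y)\cdot\theta^j(Z)$, with orbit space $B\Sigma_i\times B\Sigma_j\ra B\Sigma_{i+j}$.

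To promote this from a single equivalence to the structure of an $E_{\infty}$-map, I would not check the identity summand by summand but instead assemble all the power functors into one map of structured objects. Concretely, I would organize the two sides as special $\Gamma$-spaces or, equivalently at the category level, exhibit the total power construction as a lax symmetric monoidal functor from $(\cR(X),\wdX)$ to the Waldhausen category underlying $\prod_n A(X\times B\Sigma_n)$ with its pairing, so that applying the $K$-theory machine yields the desired $E_{\infty}$-map automatically. The bookkeeping a $\Gamma$-space map requires, namely a coherent system of maps indexed by finite pointed sets, is supplied precisely by the combinatorics of distributing $\underline{n}$ smash factors among $\underline{k}$ summands, i.e. by the functions $\underline{n}\ra\underline{k}$; this is the same combinatorial input the multiplicative machine consumes, so the splitting above is the two-summand instance of a uniform family.

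The hard part will be coherence. The equivariant splitting is canonical only up to the usual choices, and to obtain an honest $E_{\infty}$-map I must make the smash powers strictly functorial and strictly symmetric monoidal and make the homotopy-orbit constructions compatible across all $n$ at once. I would resolve this by replacing retractive spaces with a rigid point-set model in which the $n$-fold smash power is strictly associative and $\Sigma_n$-equivariant, for instance simplicial modules, or symmetric or orthogonal spectra, over the spherical group ring $\mathbf{S}[X]$, so that the distribution isomorphisms are strictly natural and satisfy the operad and $\Gamma$-space coherences on the nose. Verifying that these strictified data satisfy those coherences, and that the comparison $E\Sigma_n\heq E(\Sigma_i\times\Sigma_j)$ is made compatibly for all $(i,j)$ simultaneously, is the technical core on which the $E_{\infty}$-upgrade rests.
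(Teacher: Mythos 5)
Your proposal founders on two genuine gaps, one in the definition of $\theta^n$ and one in the passage to $K$-theory. First, the target: the Borel construction $E\Sigma_n \thp_{\Sigma_n} Y^{\sma n}$ is \emph{not} a (homotopy) finite retractive space over $X{\times}B\Sigma_n$, because away from $X$ the $\Sigma_n$-action on the full smash power has nontrivial isotropy along the fat diagonal. For instance the diagonal copy of $Y{-}X$ contributes a piece of the form $(Y{-}X)\times B\Sigma_n$, whose relative homology over $X{\times}B\Sigma_n$ is infinitely generated (already for $n=2$), so your composite does not take values in $A(X{\times}B\Sigma_n)$ at all. The paper avoids this by letting the power functor land in genuine equivariant $K$-theory $A_{\Sigma_n,\{{\rm all}\}}(X)$ of relatively finite $\Sigma_n$-spaces with arbitrary orbit types, proving a splitting of that $K$-theory over conjugacy classes of isotropy groups (theorem \ref{splitting}, via lemma \ref{makefree}, lemma \ref{divideout} and proposition \ref{AXtimesBSigman} to identify the free summand with $A(X{\times}B\Sigma_n)$), and defining $\theta^n$ as $\omega^n$ followed by projection to the free part, i.e.\ by collapsing the singular set $P^n_{n-1}Y$ (the functor $Q_0=S_{-1}/S_0$; proposition \ref{identification}). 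This collapse is not cosmetic: the transfer computation of theorem \ref{generalcalculation} depends on it.

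Second, the "apply the $K$-theory machine and the $E_\infty$-map comes out automatically" step fails because $Y\mapsto Y^{\sma n}$ is not an exact functor, so it induces nothing on $A(X)=\Loops\abs{wS_{\bullet}\cR_f(X)}$; and a lax symmetric monoidal functor out of $(\cR_f(X),\wdX)$ fed into a $\Gamma$-space machine produces an $E_\infty$-map out of the group completion of $(\abs{h\cR_f(X)},\wdX)$ --- direct-sum $K$-theory --- which is not $A(X)$. Bridging exactly this is the bulk of the paper's proof: the operations are first defined on Grayson's $G_{\bullet}$-construction using the diamond operations on \emph{filtered} objects ($\dop_{n,k}$, $\Lambda^k$, $\Xi_k$, properties (E1)--(E5), proposition \ref{input1}); the $E_\infty$-structure is then obtained on the $K$-theory of $n$-spherical objects via Segal's group completion (diagram \eqref{gpcompdiagram1}, theorem \ref{structure}; your exponential splitting is precisely the Cartan multinomial formula $\gamma^k_n$ there); and finally one passes to the limit over suspension using $\lim_n hN_{\bullet}\cR^n(X)\simeq hS_{\bullet}\cR_f(X)$. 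Your strictification remark (symmetric spectra over $\mathbf{S}[X]$, etc.) does not touch the actual difficulty in that last step: the power functors do not commute with suspension on the nose or even naively up to equivalence --- the paper proves $\omega^k(S^1\sma Y)\iso(\dop_kS^1)\sma\,\omega^k(Y)$ (proposition \ref{lont}), a \emph{twisted} suspension by the $k$-fold diamond power of $S^1$ with its permutation action, and the colimit must be formed coherently over these twisted stabilizations (the op-lax functors $\Phi_1,\Phi_2$ over $F$ and the Grothendieck construction, theorem \ref{stableoperations}). Your part (1) is fine, and your splitting $(Y\wdX Z)^{\sma n}\iso\bigvee_{i+j=n}\Ind^{\Sigma_n}_{\Sigma_i\times\Sigma_j}(Y^{\sma i}\sma Z^{\sma j})$ is the correct multiplicative input; but without (i) a mechanism defining the map on $A(X)$ rather than on direct-sum $K$-theory and (ii) the twisted suspension compatibility, neither the existence of $\theta^n$ nor the $E_\infty$ property is established.
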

The first property is a normalization condition, as satisfied by the
constructions of Segal and Waldhausen.  The second property implies that
for every $j> 0$ the operations induce homomorphisms 
$\pi_jA(X) \ra \{1\} \times \prod_{n \geq 1} \pi_jA(X{\times}B\Sigma_n)$,
when the target is given a particular algebraic structure. 
A third, algebraic, property
of Waldhausen's operations  is recalled in 
proposition \ref{modelrecursion}.   This third property is crucial in the applications
made by Segal and Waldhausen.  
Our extended operations exhibit a more technical algebraic property stated in 
 theorem \ref{gencalcintro} and theorem \ref{generalcalculation}.  

A large part of our work follows
\cite{GSAthyops}
in which many results are developed for quite general situations, 
satisfying certain technical conditions.  Part of this paper
verifies these conditions.  In order to explain 
the necessity of this technical work, we repeat several definitions from
\cite{GSAthyops} 
and quote many results.  

In section \ref{General} the main results are
proposition \ref{externalbiexactness}
and theorem \ref{symmetricbimonoid}.  For the purposes of
algebraic $K$-theory we verify exactness properties of certain
constructions; to prepare for the $E_{\infty}$-structure we 
verify coherence properties.

In section \ref{Extension} 
we recall the $G_{\bullet}$-construction
for algebraic $K$-theory
\cite{GSVWKthy}, \cite{GraysonOps}
and prepare the constructions underlying the definition of the operations in
definition \ref{omegafunctors}.

In section \ref{EinftyStructure} we 
set up to apply  general machinery, 
taking the first step toward a main result: 
For $X$ a connected simplicial abelian group, there is an operation
\begin{equation} \label{operation}
\omega = \prod_{n \geq 1} \omega^n \colon A(X) \lra \{ 1 \} \times \prod_{n \geq 1} A_{\Sigma_n , \{ {\rm all}\}} (X)
\end{equation}
which is a map of $E_{\infty}$-spaces with respect to specific algebraic structures described
in  section \ref{EinftyStructure}. The target of $\omega$ is the algebraic $K$-theory of $\Sigma_n$-spaces
retracting to $X$ (with the trivial $\Sigma_n$-action) and relatively finite with respect to $X$.
See definition \ref{equivwcofandwe}.  In the first step the $E_{\infty}$-structure is only visible
if we restrict to spherical objects.  The next section addresses this problem.

In section \ref{Suspension} we study how the functors from definition 
\ref{omegafunctors}
interact with suspension operators.  At the end of the section we complete the construction of 
the operation displayed in equation \eqref{operation}.

In  theorem \ref{splitting}, we split  $A_{\Sigma_n, \{ {\rm all}\}} (X)$ as a product of
the algebraic $K$-theory of other spaces, one of which is $A(X{\times}B\Sigma_n)$.  This corresponds to 
the subcategory of $\Sigma_n$-spaces retracting to $X$ (with the trivial $\Sigma_n$ action), relatively 
finite with respect to $X$, and with  $\Sigma_n$ acting freely outside of $X$.
We also obtain an expression for the composite functors ``projecting to the free part''
\begin{equation*}
\theta^n \colon  A(X) \stackrel{\omega_n}{\lra} A_{\Sigma_n, \{ {\rm all}\}} (X) \lra A(X{\times}B\Sigma_n).  
\end{equation*}
This expression is used in section \ref{Formulas}.

In section \ref{Transfer} we establish equivalences among various models for
equivariant $K$-theory and discuss the functors that induce transfer operations.

In section \ref{Formulas} 
our main computational result evaluates the composition 
\begin{equation*}
  A(X) \stackrel{\theta^n}{\lra} A(X{\times}B\Sigma_n) \stackrel{\phi_n}{\lra} A(X),
\end{equation*}
where $\phi_n$ is the transfer map. 
\begin{theorem}[Theorem \ref{generalcalculation}]\label{gencalcintro}
Let $X$ be a connected simplicial abelian group, thinking of the group
operation as a multiplication, 
and let $\tau^n \colon X \ra X$ be the homomorphism that raises elements to the
$n$th power.  Then
\begin{equation*}
  \phi_n \theta^n_* = (-1)^{n-1} \cdot (n{-}1)! \cdot \tau^n_*
    \colon \pi_jA(X) \ra \pi_jA(X)
\end{equation*}
for $j >0$. 
\end{theorem}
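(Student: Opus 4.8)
The plan is to reduce the statement to a purely combinatorial recursion for the operations $\phi_n\theta^n_*$ and then to solve that recursion in closed form. First I would record the base case: since $\theta^1$ is the identity by Theorem~\ref{mainthm1}, the transfer $\phi_1$ is likewise the identity, and $\tau^1$ is the identity homomorphism, the formula reads $\phi_1\theta^1_* = \tau^1_*$, matching $(-1)^0\cdot 0! = 1$. Next I would invoke Theorem~\ref{mainthm1}(2): the combined operation $\theta = \prod_n\theta^n$ is an $E_\infty$-map for the pairings of~\ref{compositionlaws}, so on homotopy groups $\theta$ is multiplicative. Composing with the splitting of Theorem~\ref{splitting}, which identifies the free part of $A_{\Sigma_n,\{\mathrm{all}\}}(X)$ with $A(X\times B\Sigma_n)$ and supplies the explicit description of $\theta^n$ as a projection, I would then feed in the transfer maps of section~\ref{Transfer}. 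The essential point is that $\theta$ is multiplicative while $\phi = \prod_n\phi_n$ is not, and the interaction of the two is governed by how the $\Sigma_n$-transfer restricts along the inclusions $\Sigma_i\times\Sigma_{n-i}\hookrightarrow\Sigma_n$.

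The engine of the computation is the generalized form of Waldhausen's algebraic property, Proposition~\ref{modelrecursion}, as extended in Theorem~\ref{generalcalculation}. I would use it to derive a recursion expressing $\phi_n\theta^n_*$ in terms of the lower compositions $\phi_m\theta^m_*$ for $m<n$ together with the power homomorphisms $\tau^m_*$. Concretely, multiplicativity of $\theta$ and the double-coset description of the restriction of $\phi_n$ to the factors $A(X\times B\Sigma_i)\times A(X\times B\Sigma_{n-i})$ should produce a relation of Newton's-identity type, so that the total invariant $\sum_{n\ge 1}\phi_n\theta^n_*\,t^n$ behaves like the logarithmic derivative of the multiplicative generating series assembled from $\theta$. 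Because $X$ is a simplicial abelian group, the group multiplication lets the partial power maps compose, and the $m$th and $(n-m)$th power homomorphisms multiply to the $n$th power homomorphism $\tau^n_*$; this is precisely the step that produces $\tau^n$ rather than a bare integer multiple of the identity, and it is exactly where the abelian-group hypothesis on $X$ is used.

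With the recursion in hand, I would verify that $a_n := (-1)^{n-1}(n-1)!\,\tau^n_*$ solves it. This reduces to the scalar identity obtained by substituting $a_n$ into the Newton-type recursion and using that $\tau^m_*\tau^{n-m}_* = \tau^n_*$ follows from $\tau$ being a homomorphism of abelian groups; the resulting relation among the coefficients $(-1)^{k-1}(k-1)!$ is the standard one reflecting $n!\,[t^n]\log(1+t) = (-1)^{n-1}(n-1)!$, the factorials entering through the orders of the symmetric groups. By uniqueness of the solution with the given initial value $a_1 = \tau^1_*$, this identifies $\phi_n\theta^n_* = (-1)^{n-1}(n-1)!\,\tau^n_*$ for every $n$ and every $j>0$.

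The main obstacle is establishing the recursion itself with the correct signs and multiplicities, rather than solving it. This demands a careful comparison, at the level of the $E_\infty$-structures of section~\ref{EinftyStructure} and the transfer models of section~\ref{Transfer}, between the multiplicative structure carried by $\theta$ and the additive (transfer) structure, while keeping track of the double cosets $\Sigma_i\backslash\Sigma_n/(\Sigma_i\times\Sigma_{n-i})$ and the free-part splitting of Theorem~\ref{splitting}. The sign $(-1)^{n-1}$ and the factor $(n-1)!$ ultimately encode the signed count of $n$-cycles in $\Sigma_n$, so the delicate point is to show that, in the composite, only the fully cyclic contribution survives the transfer while all lower partition types cancel through the recursion. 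I expect this bookkeeping, carried out in section~\ref{Formulas}, to be the technical heart of the argument.
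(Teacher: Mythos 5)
There is a genuine gap here, and it sits exactly where you locate ``the technical heart'': your recursion is never established, and as stated it cannot be. First, the engine you invoke is circular --- you appeal to Proposition~\ref{modelrecursion} ``as extended in Theorem~\ref{generalcalculation},'' but Theorem~\ref{generalcalculation} \emph{is} the statement to be proved. Second, the identity you use to solve the recursion, $\tau^m_*\tau^{n-m}_* = \tau^n_*$, is false under either reading: as composition of maps, $\tau^m \com \tau^{n-m} = \tau^{m(n-m)}$, not $\tau^n$; and while the pointwise product of the $m$th and $(n{-}m)$th power maps on the abelian group $X$ is indeed $\tau^n$, that pointwise product does not translate into the asserted relation among the induced endomorphisms of $\pi_jA(X)$. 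Third, your Newton's-identity scheme is built from products of classes $\phi_m\theta^m_*[f]$, but for $j>0$ every such product vanishes (any class on $S^j$, a suspension, can be represented on one cone while the other factor is represented on the complementary cone); so the proposed recursion degenerates rather than cancels ``lower partition types,'' and no double-coset bookkeeping for $\Sigma_i\times\Sigma_{n-i}\subset\Sigma_n$ enters the actual argument. Your numerology is right --- $(-1)^{n-1}(n-1)!$ is both the linear coefficient of $x(x-1)\cdots(x-n+1)$ and $n!\,[t^n]\log(1+t)$ --- but that coincidence is an output of the correct argument, not a substitute for it.

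What the paper actually does (Proposition~\ref{recursion1} and Theorem~\ref{generalcalculation}) is categorical rather than generating-functional: it constructs a cofibration sequence of exact functors
\begin{equation*}
  \xy \UCMT \xymatrix{
\Delta_{n-1}^n\tilde{\phi}_{n-1}{\tilde{\theta}}^{n-1} Y  \ar@{ >->}[r]
                    &   \phi_{n-1}{\theta}^{n-1} Y \sma \theta^1 Y   \ar@{->>}[r]
                           & \phi_n{\theta}^n Y }
\endxy
\end{equation*}
built from the diagonal functors $\delta^{n,k}_{n-1*}$, applies additivity (lemma~\ref{additivity}) to get $(\phi_n\theta^n)_* = (\phi_{n-1}\theta^{n-1}\sma\theta^1)_* - (\Delta_{n-1}^n\tilde{\phi}_{n-1}\tilde{\theta}^{n-1})_*$, and kills the middle term on $\pi_j$, $j>0$, by exactly the vanishing-products observation above; iterating gives $(\phi_n\theta^n)_* = (-1)^{n-1}(\mu_*\widetilde{\Delta}_{n-1}^n\cdots\widetilde{\Delta}_1^2)_*$. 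The factor $\tau^n$ then appears directly --- each composite of diagonals sends $(Y,r,s)$ to $(Y,\tau^n\com r,s)$ via formula \eqref{diagonalformula2}, with no multiplication of lower $\tau$'s --- and $(n-1)!$ is the count of index choices $1\leq k_{p}\leq p$ in the iterated $\widetilde{\Delta}$'s, not a signed count of $n$-cycles. Note also that the $E_\infty$-structure of section~\ref{EinftyStructure}, which you lean on for multiplicativity, plays no role in this computation; it is needed only for Theorem~\ref{mainthm1}. To repair your proposal you would have to replace the unproven Newton-type recursion with this cofibration sequence, which is precisely the content your sketch is missing.
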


We conclude this introduction with some comments on applications. 
First, we recall one formulation of the Kahn-Priddy theorem in stable homotopy theory.
Let $Q(X) = \Loops^{\infty}S^{\infty}(X_+)$ denote unreduced stable homotopy theory and
define reduced stable homotopy theory
$\widetilde{Q}(X) = \fib(Q(X) \ra Q(*))$, the homotopy fibre.  For each $n$ there is a
transfer map $Q(B\Sigma_n )\ra Q(E\Sigma_n) \heq Q(*)$, and, by composition, there
results a map $\widetilde{Q}(B\Sigma_n) \ra Q(*)$. The formulation of Kahn-Priddy 
theorem that we prefer is that the map
\begin{equation*}
  \pi_j(\widetilde{Q}B\Sigma_p)_{(p)} \lra \pi_j(Q(*))_{(p)}
\end{equation*}
 of homotopy groups localized at a prime $p$ is surjective for $j > 0$.

Walhausen's analogue of this result applies to the algebraic $K$-theory of the one-point 
space.  For the formulation we let $A(X)$ denote the algebraic $K$-theory of the space $X$
and let $\widetilde{A}(X) = \fib(A(X) \ra A(*))$ be the  algebraic $K$-theory of $X$ reduced relative to a point.
Manipulations formally similar to those above yield a map 
$\widetilde{A}(B\Sigma_n ) \ra A(*)$ and the analogue of the Kahn-Priddy theorem is that
the induced map
\begin{equation*}
  \pi_j(\widetilde{A}(B\Sigma_p)_{(p)} \lra \pi_j(A(*))_{(p)}
\end{equation*}
of homotopy groups localized at $p$ is surjective for $j > 0$.
In
\cite{Waldhausen87}
these operations are further developed and used to prove that the third factor $\mu(X)$ in the splitting
\begin{equation*}
  A(X) \simeq Q(X_+) \times Wh^{{\rm Diff}}(X) \times \mu(X)
\end{equation*}
is contractible, yielding the final result
  $A(X) \simeq Q(X_+) \times Wh^{{\rm Diff}}(X)$.
The significance of this fact is developed in \cite{PLmfds}.

In our situation we fix as base space a connected simplicial abelian group $X$ and define reduced 
algebraic $K$-theory relative to $X$ as 
$\widetilde{A}(X{\times}B\Sigma_n \rel X) = \fib(A(X{\times}B\Sigma_n) \ra A(X))$. 
The inclusion of a point into $B\Sigma_n$ combined with the definition of the
algebraic $K$-theory of $X{\times}B\Sigma_n$ reduced relative to $X$ yields a splitting
\begin{equation} \label{basicsplitting}
  \pi_jA(X{\times}B\Sigma_n) \iso \pi_j\widetilde{A}(X{\times}B\Sigma_n \rel X) \dsm \pi_jA(X)
\end{equation}
for any $j \geq 0$.
We have transfer maps $\phi_n \colon A(X{\times}B\Sigma_n) \ra A(X{\times}E\Sigma_n) \heq A(X)$ 
and a basic calculation in lemma \ref{transferproperty} that the composition
\begin{equation*}
  A(X) \ra A(X{\times}B\Sigma_n) \stackrel{\phi_n}{\lra} A(X)
\end{equation*}
is multiplication by $n! = \abs{\Sigma_n}$, 
where the first map is induced by inclusion of a point into $B\Sigma_n$. 

When we specialize $n$ to a prime number $p$, we have the following observations.
Make the following diagram of homotopy groups reduced mod $p$, where the
splitting \eqref{basicsplitting} appears as the middle column.
The diagonal arrow from the bottom row is multiplication by $p! = \abs{\Sigma_p}$, which is $0$ modulo $p$. 
Thus, in terms of the splitting of $\pi_jA(X{\times}B\Sigma_p)/p\bZ$ given above, 
on the second component of the image of $\theta^p_*$, the map $\phi_{p*}$ is zero.  
\begin{equation*}
  \xy \UCMT \xymatrix{
   &    \pi_j\widetilde{A}(X{\times}B\Sigma_p \rel X)/p\bZ \ar[d] \ar[dr]^{\phi_{p*}} &  
\\
  \pi_jA(X)/p\bZ   \ar[r]^(0.45){\theta^p_*} &   \pi_jA(X{\times}B\Sigma_p)/p \bZ   \ar[r]^(0.6){\phi_{p*} }    &   \pi_jA(X)/p\bZ .
\\
  &  \pi_jA(X)/p\bZ \ar[u]_{i_*} \ar[ur]_{0 } }
\endxy
\end{equation*}
Applying theorem \ref{generalcalculation},  $\phi_{p*}$ applied to the first component 
$\pi_j\widetilde{A}(X{\times}B\Sigma_p \rel X)/p$ 
of the splitting contains the image of
 $\phi_{p*}\theta^p_* = (-1)^{p-1} \cdot (p{-}1)! \cdot \tau^p_*$,
where $\tau^p \colon X \ra X$ raises elements to the $p$th power.
The numerical factors are invertible mod $p$ so that
\begin{equation*}
 \phi_{p*}\bigl( \pi_j\widetilde{A}(X{\times}B\Sigma_p \rel X)/p\bZ \bigr) \supset \im \tau^p_*,
\end{equation*}
viewing $\tau^p_*$ as an endomorphism of $\pi_jA(X)/p\bZ$.

From these calculations one extracts various additional observations.  It may happen that
the $p$th power homomorphism $\tau^p$ is an isomorphism, as in the case 
when $X$ is a connected simplicial abelian group, finite in each simplicial dimension
and $p$ is relatively prime to the order of $X_n$  for each $n$.
Then for $j > 0$, 
\begin{equation*}
  \phi_{p*} \colon  \pi_j\widetilde{A}(X{\times}B\Sigma_p \rel X)/p\bZ \lra \pi_jA(X)/p\bZ
\end{equation*}
is surjective.  The next input is the following theorem. 
\begin{theorem}
  [Theorem, \cite{Betley86}] 
If $\pi_1(X)$ is a finite group, and $\pi_i(X)$ is finitely generated for all $i \geq 2$,
then $\pi_j\bigl( A(X) \bigr) $ is finitely generated for all $j$.
\end{theorem}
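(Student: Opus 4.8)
The plan is to reduce the statement to a homological finiteness computation via Waldhausen's identification of $A(X)$ with the algebraic $K$-theory of the spherical group ring, and then to apply Serre class theory. For $X$ connected there is an equivalence $A(X) \heq K(\mathbb{S}[\Omega X])$, where $\mathbb{S}[\Omega X] = \Sigma^{\infty}_{+}\Omega X$, together with the group-completion model under which a component of $A(X)$ is equivalent to $BGL(\mathbb{S}[\Omega X])^{+}$. Since $A(X)$ is an infinite loop space, each of its components is a simple, hence nilpotent, space, so by the mod-$\mathcal{C}$ Hurewicz and Serre theorems it is enough to show that $H_j\bigl(A(X); \bZ\bigr)$ is finitely generated for every $j$, together with finite generation of $\pi_0 A(X) \iso K_0(\bZ\pi_1 X)$ (which holds because $\widetilde{K}_0$ of the integral group ring of a finite group is finite). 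As the plus construction preserves homology, the problem becomes finite generation of $H_*\bigl(BGL(\mathbb{S}[\Omega X]); \bZ\bigr)$ in each degree.

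First I would record the finiteness of the input data. The hypotheses on $X$ give $\pi_0\Omega X \iso \pi_1 X$ finite and $\pi_i \Omega X \iso \pi_{i+1}X$ finitely generated, so $\Omega X$ has finitely many components, each a simple space with finitely generated homotopy; by Serre's theorem $H_*(\Omega X; \bZ)$, and hence $\pi_*\bigl(\mathbb{S}[\Omega X]\bigr) = \pi^{s}_{*}(\Omega X_{+})$, is finitely generated in each degree. Consequently the identity component $GL_n^{0}$ of the grouplike monoid of homotopy-invertible $n{\times}n$ matrices over $\mathbb{S}[\Omega X]$ has finitely generated homotopy, and therefore finitely generated homology, in each degree. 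This is exactly where the hypothesis that $\pi_1 X$ be \emph{finite}, rather than merely finitely generated, enters: it forces $\Omega X$ to have finitely many components and makes $\bZ\pi_1 X$ an order in a semisimple $\mathbb{Q}$-algebra.

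Next I would compute $H_*\bigl(BGL_n(\mathbb{S}[\Omega X]); \bZ\bigr)$ through the fibration sequence $BGL_n^{0} \ra BGL_n(\mathbb{S}[\Omega X]) \ra BGL_n(\bZ\pi_1 X)$ that presents the classifying space of the matrix monoid over the discrete general linear group $\Gamma_n = GL_n(\bZ\pi_1 X)$. Because $\bZ\pi_1 X$ is an order in a semisimple $\mathbb{Q}$-algebra, $\Gamma_n$ is an arithmetic group, hence of type $FP_{\infty}$, so its homology with any finitely generated coefficient module is finitely generated in each degree. The Serre spectral sequence of the fibration then has finitely generated $E^2$-terms in each total degree, whence $H_j\bigl(BGL_n(\mathbb{S}[\Omega X]); \bZ\bigr)$ is finitely generated for all $n$ and $j$. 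Finally, Waldhausen's homological stability identifies $H_j\bigl(BGL(\mathbb{S}[\Omega X]); \bZ\bigr)$ with $H_j\bigl(BGL_n(\mathbb{S}[\Omega X]); \bZ\bigr)$ for $n \gg j$, so $H_*(A(X);\bZ)$ is finitely generated in each degree, and Serre class theory for the simple space $A(X)$ yields the conclusion.

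I expect the main obstacle to lie in the two deep finiteness inputs at the heart of the third step: the fact that the arithmetic groups $GL_n(\bZ\pi_1 X)$ are of type $FP_{\infty}$ with finitely generated homology, which rests on Borel--Serre reduction theory for orders, and the homological stability range for $GL_n$ of the ring spectrum $\mathbb{S}[\Omega X]$, which guarantees that $H_j(BGL)$ stabilizes to a single finitely generated group. An alternative organization routes the argument through the linearization map $A(X) \ra K(\bZ\pi_1 X)$, whose target has finitely generated homotopy by Quillen-type finiteness for the $K$-theory of orders; one must then show the homotopy fiber has finitely generated homotopy groups, which reduces, via the identification of the relative term with stable $K$-theory, to the same finite generation of $H_*(\Omega X)$ supplied by the hypotheses. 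Either route hinges on compressing the a priori unbounded $K$-theoretic data into the finitely generated homology of the loop space $\Omega X$.
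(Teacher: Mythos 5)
The paper does not actually prove this statement: it is imported verbatim from Betley's paper \cite{Betley86} and used as a black box, feeding the Nakayama-lemma argument for theorem \ref{KPgeneral}. Measured against Betley's published proof, your main route is essentially a faithful reconstruction of it: the plus-construction model $A(X) \heq \bZ \times BGL(\mathbb{S}[\Omega X])^{+}$, the observation that the hypotheses (with $\pi_1(X)$ \emph{finite}, so $\Omega X$ has finitely many components and $\bZ\pi_1(X)$ is an order in a semisimple $\mathbf{Q}$-algebra) force $\pi_*^s(\Omega X_+)$ to be degreewise finitely generated, the fibration $BGL_n^0 \ra BGL_n(\mathbb{S}[\Omega X]) \ra BGL_n(\bZ\pi_1 X)$, Borel--Serre finiteness for the arithmetic groups $GL_n(\bZ\pi_1 X)$, the Serre spectral sequence, homological stability, and Serre class theory for the simple (H-space) components, together with finite generation of $\pi_0 A(X) \iso K_0(\bZ\pi_1 X)$. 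This is the architecture of Betley's argument, and the reduction steps you carry out explicitly (Serre theory for the components of $\Omega X$, transfer of finiteness through the spectral sequence over an $FP_\infty$ group) are all sound.

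Two caveats. First, the stability input you call ``Waldhausen's homological stability'' is misattributed and slightly glossed: the Serre spectral sequence has \emph{local} coefficients $H_q\bigl(BGL_n^0\bigr)$ with the conjugation action of $GL_n(\bZ\pi_1 X)$. For finite generation alone the twisting is harmless (a group of type $FP_\infty$, or virtually so, has finitely generated homology with any coefficient module that is finitely generated over $\bZ$), but to conclude that $H_j(BGL)$ is finitely generated you must know the maps $H_j(BGL_n) \ra H_j(BGL_{n+1})$ stabilize --- a colimit of finitely generated groups need not be finitely generated --- and this is exactly where Betley invokes Dwyer's \emph{twisted} homological stability for $GL_n(\bZ\pi_1 X)$ applied to these coefficient systems of finite degree; it is not a theorem of Waldhausen, nor does untwisted stability suffice. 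Second, your ``alternative organization'' is incorrect as stated: the linearization map has natural target $K(\bZ[\Omega X])$, the $K$-theory of the simplicial group ring, not $K(\bZ\pi_1 X)$, and the homotopy fiber of linearization is not identified degreewise with stable $K$-theory --- Waldhausen's identification of stable $K$-theory with stable homotopy concerns $A^s(X)$, i.e., only the first layer of the relevant tower. Since you offer this only as an aside, it does not damage the main argument, but as written that route would not close.
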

Then Nakayama's lemma applies as in 
\cite{Waldhausen82} to lift the  result on  mod $p$ homotopy to a result on $p$-localized
homotopy. We obtain the following theorem of Kahn-Priddy type. 
\begin{theorem} \label{KPgeneral}
Let $X$ be a connected simplicial abelian group, finite in each dimension, such that the order
of $X_n$ is prime to $p$. 
  For $j > 0$ and $p$ an odd prime, the transfer induces surjections
  \begin{equation*}
    \pi_j\widetilde{A}(X{\times}B\Sigma_p \rel X)_{(p)} \lra \pi_jA(X)_{(p)}.
  \end{equation*}
on homotopy groups localized at $p$. \qed
\end{theorem}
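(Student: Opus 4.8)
The plan is to deduce the $p$-local statement from its mod-$p$ reduction by a Nakayama argument, and to prove the mod-$p$ surjectivity by combining the composition formula of Theorem \ref{generalcalculation} with the transfer computation of lemma \ref{transferproperty}. Concretely, the theorem of Betley \cite{Betley86} guarantees that $\pi_jA(X)$ is finitely generated, hence $\pi_jA(X)_{(p)}$ is a finitely generated $\bZ_{(p)}$-module. Suppose we know the reduced transfer $\phi_{p*}\colon \pi_j\widetilde{A}(X{\times}B\Sigma_p \rel X)/p\bZ \to \pi_jA(X)/p\bZ$ is surjective. Writing $M \subseteq \pi_jA(X)_{(p)}$ for the image of $\phi_{p*}$ on $p$-local homotopy, the cokernel $\pi_jA(X)_{(p)}/M$ is finitely generated and vanishes after reduction modulo $p$; Nakayama's lemma then forces it to vanish, yielding the asserted surjection. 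So the whole theorem reduces to the mod-$p$ statement.

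For the mod-$p$ surjectivity I would argue exactly as in the diagram preceding the statement. Decompose $\theta^p_*(x)$ along the splitting \eqref{basicsplitting} as $a + i_*(b)$, with $a$ in the reduced summand and $b \in \pi_jA(X)$. By lemma \ref{transferproperty} one has $\phi_{p*}(i_*(b)) = p! \cdot b$, which is zero modulo $p$; hence modulo $p$ the transfer of $\theta^p_*(x)$ is carried entirely by the reduced part, so that $\phi_{p*}(a) \equiv \phi_{p*}\theta^p_*(x)$. Theorem \ref{generalcalculation} evaluates the right-hand side as $(-1)^{p-1}\cdot(p{-}1)!\cdot\tau^p_*(x)$. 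Since $(p{-}1)! \equiv -1 \pmod p$ by Wilson's theorem and $(-1)^{p-1}=1$ for $p$ odd, the numerical coefficient is a unit modulo $p$, and therefore the image of $\phi_{p*}$ restricted to the reduced summand contains $\im(\tau^p_*)$ in $\pi_jA(X)/p\bZ$.

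Finally I would invoke the finiteness hypothesis. Because $X_n$ is a finite abelian group of order prime to $p$ in every simplicial degree, the $p$th power homomorphism $\tau^p\colon X \to X$ is a degreewise automorphism, hence an isomorphism of simplicial abelian groups, so $\tau^p_*$ is an isomorphism on $\pi_jA(X)/p\bZ$ and in particular surjective. Combined with the previous paragraph this gives the mod-$p$ surjectivity of the reduced transfer, and the Nakayama step of the first paragraph completes the argument. The genuinely hard content is not in this assembly but upstream, in Theorem \ref{generalcalculation}; within the present proof the only point requiring care is the interaction of $\theta^p_*$ with the splitting \eqref{basicsplitting} and the transfer, namely that the base-summand contribution is exactly multiplication by $p!$ and hence invisible modulo $p$, which is precisely what isolates the reduced summand as the source of the surjection.
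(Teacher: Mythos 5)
Your proposal is correct and follows essentially the same route as the paper: decompose along the splitting \eqref{basicsplitting}, observe via lemma \ref{transferproperty} that the base-summand contribution to $\phi_{p*}\theta^p_*$ is multiplication by $p!$ and hence vanishes mod $p$, use theorem \ref{generalcalculation} together with the invertibility of $(-1)^{p-1}(p{-}1)!$ mod $p$ (Wilson) to see the reduced summand surjects onto $\im\tau^p_*$, note that the dimensionwise finiteness hypothesis makes $\tau^p$ an isomorphism, and lift from mod $p$ to $p$-local via Betley's finite-generation theorem and Nakayama's lemma, exactly as in the discussion preceding the theorem in the paper. No gaps; your explicit appeal to Wilson's theorem and the careful statement of the Nakayama step are, if anything, slightly more detailed than the paper's sketch.
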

 In particular, take $X = BC_2= \bR P^{\infty}$
and $p$  an odd prime.
There are similar statements for all the lens spaces $BC_q$, $q$ prime to $p$.

A very interesting case is $X = BC_{\infty}$, the classifying space of the infinite cyclic group $C_{\infty}$. 
Of course $X \heq S^1$, and there are splittings-up-to-homotopy of infinite loop spaces
\begin{align*}
  A^{fd}(S^1) & \heq A^{fd}(*) \times BA^{fd}(*) \times N_-A^{fd}(*) \times N_+A^{fd}(*)
\intertext{and}
 A^{fd}(S^1{\times}B\Sigma_n) &
 \heq A^{fd}(B\Sigma_n) \times BA^{fd}(B\Sigma_n) \times N_-A^{fd}(B\Sigma_n) \times N_+A^{fd}(B\Sigma_n).
\end{align*}
These are studied in 
\cite{KWFT}
and the first is examined in great detail in
\cite{GKMNilOps}.
In future work we would like to understand the  operations we have constructed in terms of these splittings. 
As a first step in this direction we have shown in section \ref{EinftyStructure} that the
operations we construct are morphisms of infinite loop spaces.  
Should the $\theta$ operations be compatible with the splitting, one must then
investigate whether or not the $\theta$ operations commute with the Frobenius and Verschiebung 
operations on the Nil-terms defined in \cite{GKMNilOps}.

Our work also admits a generalization where $X$ may be any connected space.  This result is a total operation
\begin{equation*}
  \tilde{\omega} \colon  A(X) \lra \{ 1 \} \times \prod_{n \geq 1} A_{\Sigma_n, \{ {\rm all} \}} (X^n),
\end{equation*}
about which we know little at this point. Our experiments have also lead to the observation that if $G$ is a 
simplicial group, not necessarily abelian, whose realization is homotopy equivalent to a finite $CW$-complex
then there is a product structure on $A(BG)$.   This will be the subject of a later paper.
Finally, reversing the progression from Segal's original idea to Waldhausen's generalization, we can develop
operations $\theta^n \colon \pi^s_* (X_+) \ra \pi^s_*(X{\times}B\Sigma_n)$, where $X$ is again a connected 
simplicial abelian group.
\section{The symmetric bimonoidal category of retractive \\
spaces  over a connected simplicial  abelian group}
 \label{General} 
%Revised  3 July 2017
The category $\cR(X)$ is the category of retractive simplicial sets 
$(Y, r, s)$ over the simplicial set $X$, where
$r \colon Y \ra X$ is a retraction,  $s \colon X \ra Y$
is a section for $r$ and morphisms $(Y_1, r_1, s_1) \ra (Y_2, r_2, s_2)$
respect all the data.  
 A cofibration
$\xy \UCMT \xymatrix@1{(Y_1, r_1, s_1) \ar@{ >->}[r] & (Y_2, r_2, s_2)} \endxy$ 
in $\cR_f(X)$ is a map such that $Y_1 \ra Y_2$ is injective.   A weak
equivalence is a map 
$(Y_1, r_1, s_1) \ra (Y_2, r_2, s_2)$ whose realization $\abs{Y_1} \ra \abs{Y_2}$
is a homotopy equivalence of spaces. 
For algebraic $K$-theory we use the full subcategory
$\cR_f(X)$ of relatively finite retractive simplicial sets with 
cofibrations and weak equivalences. ``Relatively finite''  means 
that there are only finitely many non-degenerate simplices in 
$Y{-}X$.  For background on the terminology, see \cite[Section 1.1]{Waldhausen85}.

We aim to construct a total operation
\begin{equation*}
\theta \colon  A(X) \lra \{1\} \times \prod_{n \geq 1} A(X{\times}B\Sigma_n)
\end{equation*}
for $X$ a connected simplicial abelian group with multiplication 
$\mu \colon X{\times}X \ra X$
and to prove the operation has an $E_{\infty}$-structure. 
In order to achieve this, the elements from which the construction 
is developed must be of high quality.  
The necessary qualities are recorded in the first part of theorem \ref{symmetricbimonoid};
the second part of the theorem records algebraic properties of the 
product operation $\sma$.  We discuss first the definition of the 
product operation, prove the second part of the theorem, 
and finish this section with the proof of the first part of the theorem.

Concerning the first part of the theorem, our constructions require a coherence result for diagrams
involving sum and product operations, as provided by LaPlaza
\cite[Proposition 10]{LaPlazaCoh1}.
His coherence theorem takes as input the commutativity of
24 diagrams, reducible to a smaller, but still relatively large, subset
\cite[pp.~40--41]{LaPlazaCoh1}.  We will see that the coherence properties we need 
rest on the well-understood coherence properties of the one-point union and smash product
of pointed sets.  On the other hand, the second part of the theorem involves properties
of the operations not reducible to dimension-wise considerations. 
\begin{theorem} \label{symmetricbimonoid}  
Let $X$ be a connected simplicial abelian group. 
\begin{enumerate}
\item The triple  $\bigl( \cR(X) , \wdX, \sma \bigr)$,
where $\wdX$ denotes the operation of union along the common subspace $X$
and $\sma$ denotes the pairing \eqref{abelianpairing}, 
is a symmetric bimonoidal category. 
\item The pairing $\sma$ restricts to $\cR_f(X)$, where it is biexact, meaning exact in each variable separately.  
Explicitly, the functors defined by $-\sma Y$ and $Y \sma -$ preserve cofibrations, pushouts along cofibrations, 
 and weak equivalences.
\end{enumerate}
\end{theorem}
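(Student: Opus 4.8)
The plan is to prove the theorem in two stages corresponding to its two parts, first establishing the symmetric bimonoidal structure on $\cR(X)$ and then verifying biexactness on $\cR_f(X)$. For part (1), I would begin by pinning down the pairing $\sma$ explicitly. Since $X$ is a simplicial abelian group, an object $(Y,r,s)$ of $\cR(X)$ has, in each simplicial degree, a pointed set $Y_k/X_k$ (the quotient collapsing the section), and the multiplication $\mu \colon X\times X\ra X$ lets me form a ``fiberwise smash over $X$'' whose non-basepoint simplices are indexed by pairs of non-basepoint simplices of the two factors, with retraction supplied by $\mu$ applied to the two $X$-coordinates. The key structural observation, which I would isolate as the engine of the whole argument, is that in each simplicial degree $\wdX$ is the one-point union and $\sma$ is the smash product of the pointed sets $Y_k/X_k$, twisted only by the abelian-group structure of $X_k$ in the retraction data. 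This reduces the required coherence to degreewise coherence of $(\vee,\wedge)$ on pointed sets.

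The heart of part (1) is then the invocation of LaPlaza's coherence theorem \cite[Proposition 10]{LaPlazaCoh1}. I would set up the requisite natural isomorphisms: associativity and commutativity constraints for $\wdX$, associativity and commutativity constraints for $\sma$, the left and right distributivity isomorphisms $\delta$ relating $\sma$ over $\wdX$, and the units (the object $X$ itself for $\wdX$, and the ``free rank-one'' object $X\times S^0$, i.e. $X$ with a disjoint copy of $X$ adjoined, for $\sma$). The verification LaPlaza requires is the commutativity of his list of coherence diagrams; my strategy is to check each one degreewise, where by the observation above it becomes the corresponding diagram for pointed sets under $\vee$ and $\wedge$. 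Since the commutativity and coherence of the category of pointed sets under one-point union and smash product is classical and well understood, each diagram commutes degreewise, hence commutes as a diagram of simplicial objects. The twisting by the abelian multiplication $\mu$ only affects the retraction maps, and because $\mu$ is associative and commutative as a group operation, it is compatible with all the constraints; this is exactly where connectedness and the abelian hypothesis on $X$ are used, and it must be tracked carefully so that the distributivity isomorphisms respect the $X$-coordinates.

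For part (2), I would first check that $\sma$ restricts to $\cR_f(X)$: if $Y{-}X$ and $Y'{-}X$ each have finitely many non-degenerate simplices, then the non-degenerate simplices of $(Y\sma Y'){-}X$ are indexed by pairs of such, hence finite in number. Then, fixing $Y$, I would verify that $-\sma Y$ preserves cofibrations (an injection on non-degenerate simplices away from $X$ remains an injection after smashing, since the index set is a product), preserves weak equivalences (this follows because $|{-}\sma Y|$ can be analyzed via the filtration by skeleta of the smash factor, or directly because the realization of a fiberwise smash over a connected $X$ behaves like a product of realizations up to homotopy, so a degreewise-controlled equivalence in the first variable induces one after smashing), and preserves pushouts along cofibrations (a colimit computation, since $-\sma Y$ is defined degreewise by a smash which commutes with the relevant pushouts of pointed sets). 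The symmetric statements for $Y\sma -$ follow from the commutativity constraint established in part (1).

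I expect the main obstacle to be the preservation of weak equivalences by $-\sma Y$, since a degreewise smash product does not automatically take realization to a homotopy-meaningful construction: one must ensure that the fiberwise smash over $X$ is suitably well-behaved on realizations. The cleanest route is to reduce to the case where $Y$ is built up from free cells by a finite filtration (using relative finiteness), prove the equivalence-preservation for a single free cell, and then induct along the filtration using the gluing lemma for weak equivalences; connectedness of $X$ guarantees that the fiber of $\abs{Y}\ra\abs{X}$ is well-controlled so that the smash does not collapse homotopical information. Handling this filtration argument carefully, while keeping the retraction and section data consistent at each stage, is the technically delicate point, whereas the coherence verification of part (1), though combinatorially lengthy, is conceptually routine once the degreewise reduction to pointed sets is in place.
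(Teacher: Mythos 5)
Your overall architecture matches the theorem's needs, but for part (1) you take a genuinely different route from the paper. The paper explicitly considers and then declines the degreewise reduction you propose (``It is possible to develop the coherence properties \ldots dimensionwise and pointwise over $X$ \ldots We take a different approach here''). Instead it shows that $\wdX$ is a \emph{categorical sum} in $\cR(X)$ (via a category $\cT$ of set-level triples), which makes all additive coherence automatic and renders seventeen of LaPlaza's twenty-four diagrams routine because $\sma$ is biexact and preserves sums; commutativity of $\sma$ is then extracted from an explicit swap of the defining pushout using $r_1\cdot r_2 = r_2\cdot r_1$ (abelianness of $\mu$), associativity from a three-by-three iterated-colimit argument (lemma \ref{iteratedcolimlemma}) producing preferred one-step pushout presentations of $(Y_1\sma Y_2)\sma Y_3$ and $Y_1\sma(Y_2\sma Y_3)$, and distributivity directly from the sum property plus a degreewise count. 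Your degreewise reduction to $(\vee,\wedge)$ on the pointed sets $Y_k/X_k$ is a legitimate alternative and arguably more uniform, but the paper's route buys reusable infrastructure: the iterated-colimit lemma and the categorical-sum observation are used repeatedly later (e.g.\ in proposition \ref{externalbiexactness} and section \ref{Formulas}). For part (2) your cofibration and pushout arguments coincide with the paper's (difference-set products; colimit commutation), but for weak equivalences the paper is more direct than your cell-filtration induction: it simply applies the gluing lemma to the geometric realization of the defining pushout of $\esm$ (realization preserves the relevant pushouts and products), then one more gluing for the pushforward along $\abs{\mu}$, with no filtration needed; it also quietly factors $\sma = \mu_*\com\esm$ and quotes exactness of $\mu_*$ from Waldhausen, which shortens everything.

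One concrete error: your multiplicative unit is wrong. You propose $X\times S^0$, ``$X$ with a disjoint copy of $X$ adjoined,'' but that is the unit for the \emph{fiberwise} smash of May--Sigurdsson (external smash restricted to the diagonal), not for the paper's convolution-style product $\sma = \mu_*\com\esm$. For $\sma$, the away-from-$X$ part of $(X\times S^0)\sma Y$ in degree $k$ is $X_k\times(Y{-}X)_k$, which is not $(Y{-}X)_k$ in general, so $X\times S^0$ fails to be a unit. The correct unit is $i_{e*}(S^0) = X\wed S^0$: the space $X$ with a \emph{single} whiskered point attached at the identity element $e$, with retraction collapsing that point to $e$ (the paper's example \ref{esmaction} together with proposition \ref{internalizing}). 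Since the unit object and its unit isomorphisms $\lambda_Y,\rho_Y$ are required inputs to LaPlaza's theorem, this must be repaired before your coherence verification can proceed; with the corrected unit, the rest of your degreewise strategy goes through, though you should note that connectedness of $X$ plays no role in this theorem --- only associativity and commutativity of $\mu$ are used.
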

Our product operation $\sma$ derives from an exterior smash product $\esm$ of retractive simplicial sets,
following the exterior smash product of retractive spaces as described in 
\cite{MaySigurdssonPHT}.  Since we are working with simplicial sets, our version of the exterior
smash product has a description in terms of operations on discrete sets, applied dimensionwise. 
See the discussion at the start of the proof of part one of theorem \ref{symmetricbimonoid}.
\begin{definition}
  \label{externalsmash}
Let $(Y_i, r_i, s_i)$ be  objects of $\cR(X_i)$, for $i =1, 2$. The
exterior smash product of $(Y_1, r_1, s_1)$ with $(Y_2, r_2, s_2)$ is in $\cR(X_1{\times}X_2)$, and the underlying space 
$Y_1 \esm Y_2$ completes the following square to a pushout. 
\begin{equation}
  \label{extsmashprod}
  \xy \UCMT \xymatrix{
Y_1{\times}X_2 \cup_{X_1{\times}X_2} X_1{\times}Y_2   \ar@{ >->}[r] \ar[d]_{r_1{\times}r_2} & Y_1 \times Y_2 \ar[d]
\\
X_1 \times X_2    \ar@{ >->}[r]^{s_1\esm s_2}                                      & Y_1 \esm Y_2}
\endxy
\end{equation}
The square displays the section $s_1\esm s_2$; the retraction $r_1 \esm r_2$ arises from the 
universal property of the pushout.
\end{definition}
  Note that if both $X_1$ and $X_2$ are the one-point space, then this 
is the smash product in the category of pointed spaces.  Extending this idea, if 
$x_1 \colon \{*\} \ra X_1$ and $x_2 \colon \{*\} \ra X_2$ are two maps of the one-point space into
$X_1$ and $X_2$, and we take preimages $r_1^{-1}(x_1)$ and $r_2^{-1}(x_2)$, then these are pointed spaces,
and there is an injective map 
$r_1^{-1}(x_1) \wedge r_2^{-1}(x_2) \ra Y_1\esm Y_2$ 
over the point $(x_1, x_2) \in X_1{\times}X_2$.  This observation helps
explain the ``fiber-wise smash product'' terminology and indicates how the coherence issues for products may be
resolved at the level of pointed sets.  The examples
\ref{zeroobject} and \ref{esmaction} here play roles in the proof of part one of theorem \ref{symmetricbimonoid}.  
Also, since we work with simplicial sets, underlying the symmetric monoidal structure 
 $\bigl( \cR(X) , \wdX, \sma \bigr)$ is the symmetric monoidal structure on the category of sets.
\begin{example} \label{zeroobject}
  For any $Y_2 \in R(X_2)$, note that $X_1 \esm Y_2 \iso X_1 \times X_2$, the ``zero'' object in $\cR(X_1 \times X_2)$.
Colloquially, the exterior smash product of a terminal object with any object yields a terminal object.  Explicitly,
a natural isomorphism $\lambda^*_{Y_2} \colon X_1 \esm Y_2 \ra X_1 \times X_2$ arises from the following diagram
by mapping the pushout of the top row to the pushout of the bottom row.
\begin{equation*}
  \xy \UCMT \xymatrix{
X_1{\times}X_2 \ar[d]_{=} &   \ar[l]   X_1{\times}X_2\cup_{X_1{\times}X_2} X_1{\times}Y_2 \ar@{ >->}[r] \ar[d]_{\iso} & X_1{\times}Y_2 \ar[d]_{=}
\\
X_1{\times}X_2 &   \ar[l]    X_1{\times}Y_2 \ar@{ >->}[r] & X_1{\times}Y_2}
\endxy
\end{equation*}
\end{example}
\begin{example}\label{esmaction}
The bifunctor $\cR(*)\times \cR(X) \ra \cR(X)$ given on objects by 
$(Y_1, Y_2)\mapsto Y_1{\esm}Y_2$ defines an action of
$\cR(*)$ on $\cR(X)$ after identifying $\{*\}{\times}X$ with $X$ in the canonical way.
 This bifunctor also restricts to 
an action $\cR_f(*) \times \cR_f(X) \ra \cR_f(X)$.

This action has an identity element.  Indeed, for $S^0=\{*,*'\}$ in $\cR_f(*)$, with $r$ the constant map to the basepoint $*$,
$s$ the inclusion, and $Y \in \cR(X)$, the function
$S^0{\times}Y\ra Y$ defined by $(*, y) \mapsto sr(y)$ and $(*',y) \mapsto y$ induces an isomorphism
$S^0{\esm}Y \stackrel{\iso}{\ra}  Y$ of retractive spaces over $X$.  An inverse to this isomorphism is provided by
$y \mapsto [(*', y)] \in S^0{\esm}Y$.
\end{example}
\begin{definition}
  \label{internalsmashproduct}
Let  $X$ be a space with a multiplication $\mu \colon X^2 \ra X$.
We operate on the category $\cR(X)$,
using the pairing
\begin{equation}  \label{abelianpairing}
\sma = \mu_*\com\esm \colon \cR( X ) \times \cR (X) \stackrel{\esm}{\lra}
               \cR(X \times X) \stackrel{\mu_*}{\lra} 
                     \cR(X)
\end{equation}
where $\esm$ is the external smash product pairing defined in \eqref{extsmashprod},
and $\mu_*$ is the functor induced by the multiplication 
$\mu \colon X^2 \ra X$. 
Explicitly, $(Y_1, r_1, s_1) \sma (Y_2, r_2, s_2)$ completes the following diagram to a pushout.
\begin{equation}
  \label{intsmashprod}
  \xy \UCMT \xymatrix{
 \ar[d]_{\mu  (r_1, \id)\cup \mu (\id, r_2)}  Y_1{\times}X \cup_{X{\times}X} X{\times}Y_2   \ar@{ >->}[r] & Y_1 \times Y_2 \ar[d]
\\
 X   \ar@{ >->}[r]^s    &     Y_1 \sma Y_2 }
\endxy
\end{equation}
\end{definition}
 We use these notations to bring this section close to 
conformity with 
\cite{GSAthyops}.
Perfect conformity is not possible, for we must use both the
one-point union of pointed spaces $\wed$ and the union of two spaces along a common subspace $X$, 
denoted $\wdX$.  We also point out that the usual  notation $\wedge$ has been used in 
\cite{MaySigurdssonPHT}
for a product defined by restricting the external smash product of two spaces over $X$ to
the diagonal of $X{\times}X$.

The following lemma is used to develop properties of the smash products; the proof will be given
 after demonstrating applications in propositions \ref{extprodnaturality} and \ref{externalbiexactness}.
\begin{lemma} \label{iteratedcolimlemma}
Let  $\cC$ be a category  with cofibrations and let 
  \begin{equation}
    \label{iteratedcolim}
    \xy \UCMT \xymatrix{  
  A_2  &  \ar[l]  B_2  \ar@{ >->}[r] & C_2
\\
  A_1   \ar@{ >->}[u] \ar[d] 
            &  \ar[l]  B_1  \ar@{ >->}[r] \ar@{ >->}[u] \ar[d]  
                     &   C_1 \ar@{ >->}[u] \ar[d] 
\\
  A_0      &  \ar[l]   B_0   \ar@{ >->}[r] &  C_0 
}
\endxy
  \end{equation}
be a commutative diagram in which the canonical map from $B_2\cup_{B_1} C_1$ to $C_2$ is a cofibration.  

Passing to pushouts by columns results in a diagram in which the right-pointing arrow is a cofibration:
\begin{equation}
  \label{iteratedcolim1}
  \xy \UCMT \xymatrix{
A_0 \cup_{A_1} A_2 &  \ar[l] B_0 \cup_{B_1} B_2  \ar@{ >->}[r] &  C_0\cup_{C_1} C_2.}
\endxy
\end{equation}
The diagram
\begin{equation}
  \label{iteratedcolim2}
  \xy \UCMT \xymatrix{
A_0 \cup_{B_0} C_0 &  \ar[l] A_1 \cup_{B_1} C_1  \ar@{ >->}[r] &  A_2\cup_{B_2} C_2}
\endxy
\end{equation}
 obtained by passing to pushouts by rows has a similar property. 
\end{lemma}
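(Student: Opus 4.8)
The plan is to prove both assertions as instances of a single gluing principle for cofibrations, using only the structure of a category $\cC$ with cofibrations: that the pushout of a cofibration along an arbitrary map exists and is again a cofibration (cobase change), and that cofibrations are closed under composition. The heart of the matter is the single hypothesis that the corner map $B_2 \cup_{B_1} C_1 \to C_2$ is a cofibration; once this is in hand, everything else is bookkeeping with the pasting law for pushouts. A pleasant feature I would exploit is that the \emph{same} corner object $M = B_2\cup_{B_1}C_1$ governs both conclusions, so a single argument suffices.

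I would first treat \eqref{iteratedcolim1}, namely that $B_0 \cup_{B_1} B_2 \to C_0 \cup_{C_1} C_2$ is a cofibration. Set $M = B_2 \cup_{B_1} C_1$. Cobase change of the cofibration $B_1 \rightarrowtail B_2$ along $B_1 \to C_1$ shows $C_1 \rightarrowtail M$ is a cofibration, so the pushout $Q := C_0 \cup_{C_1} M$ exists; by hypothesis $M \rightarrowtail C_2$ is a cofibration as well. I would then factor the map in question as $B_0 \cup_{B_1} B_2 \to Q \to C_0 \cup_{C_1} C_2$ and prove each arrow is a cofibration.

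For the first arrow, two applications of the pasting law identify $Q$. Pasting the pushout square defining $M$ beneath the square defining $Q$ gives $Q \cong C_0 \cup_{B_1} B_2$, with $B_1 \to C_0$ the composite through $C_1$; pasting this identification together with the pushout square defining $B_0 \cup_{B_1} B_2$ then gives $Q \cong C_0 \cup_{B_0}\bigl(B_0 \cup_{B_1} B_2\bigr)$, where I use the commutativity $B_1 \to B_0 \to C_0 = B_1 \to C_1 \to C_0$ supplied by \eqref{iteratedcolim}. Hence $B_0 \cup_{B_1} B_2 \to Q$ is the cobase change of the cofibration $B_0 \rightarrowtail C_0$, so it is a cofibration. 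For the second arrow, the pasting law gives $C_0 \cup_{C_1} C_2 \cong \bigl(C_0 \cup_{C_1} M\bigr) \cup_M C_2 = Q \cup_M C_2$, so $Q \to C_0 \cup_{C_1} C_2$ is the cobase change of the cofibration $M \rightarrowtail C_2$ along $M \to Q$, again a cofibration. Closure under composition finishes \eqref{iteratedcolim1}, and tracing the structure maps (all of which descend from the one commutative diagram \eqref{iteratedcolim}) confirms that the composite really is the canonical comparison map.

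Finally, \eqref{iteratedcolim2} is the identical statement after relabeling: the map of spans $A_1 \leftarrow B_1 \rightarrowtail C_1$ into $A_2 \leftarrow B_2 \rightarrowtail C_2$ has center cofibration $B_1 \rightarrowtail B_2$, legs the cofibrations $A_1 \rightarrowtail A_2$ and $C_1 \rightarrowtail C_2$, and exactly the same corner map $C_1 \cup_{B_1} B_2 = B_2 \cup_{B_1} C_1 \to C_2$; so the argument above applies verbatim to show $A_1 \cup_{B_1} C_1 \to A_2 \cup_{B_2} C_2$ is a cofibration. I expect the only genuine obstacle to be organizational rather than conceptual: choosing the intermediate object $M$ and orienting the pasting squares so that each stage is visibly a cobase change of a cofibration, while keeping track of the commutativities in \eqref{iteratedcolim}. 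The lone non-formal input is the hypothesis on $B_2 \cup_{B_1} C_1 \to C_2$, and isolating precisely where it enters is what makes the otherwise symmetric pair of conclusions go through.
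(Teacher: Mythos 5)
Your proof is correct and follows essentially the same route as the paper's: you factor the comparison map through the intermediate pushout $Q \cong B_2\cup_{B_1}C_0$, exhibit each factor as a cobase change of a cofibration ($B_0\rightarrowtail C_0$, then $B_2\cup_{B_1}C_1\rightarrowtail C_2$) via the pasting law $(A\cup_B C)\cup_C D \cong A\cup_B D$, and compose. Your relabeling of spans for the row-wise case is exactly the paper's reflection of the diagram in the diagonal $A_0B_1C_2$.
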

\begin{proposition} \label{extprodnaturality}
The exterior smash product $\esm$ is functorial for pairs of maps. 
That is, given $f_1 \colon X_1 \ra X_1'$ and $f_2 \colon X_2 \ra X_2'$, the diagram
\begin{equation}
  \label{functoriality}
  \xy \UCMT \xymatrix{
\cR_f(X_1) \times \cR_f(X_2) \ar[r]^(0.57){\esm} \ar[d]_{f_{1*} \times f_{2*}}   & \cR_f(X_1{\times}X_2)  \ar[d]_{(f_1 \times f_2)_*}
\\
\cR_f(X_1') \times \cR_f(X_2') \ar[r]^(0.57){\esm} & \cR_f(X_1'{\times}X_2')
}
\endxy
\end{equation}
commutes up to natural isomorphism.
\end{proposition}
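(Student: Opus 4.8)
The plan is to prove the proposition by realizing both composites $(f_1{\times}f_2)_* \com \esm$ and $\esm \com (f_{1*}{\times}f_{2*})$ as iterated colimits of a single finite diagram and invoking lemma \ref{iteratedcolimlemma} to interchange the order in which the colimits are formed. First I would record the two ingredients as pushouts. The pushforward $f_{i*}(Y_i, r_i, s_i)$ is the cobase change $X_i' \cup_{X_i} Y_i$ of the section $s_i$ (a cofibration, since $r_i s_i = \id$) along $f_i$, with its induced retraction and section; and $Y_1 \esm Y_2$ is, by definition \ref{externalsmash}, the pushout collapsing the fat wedge $W_{12} = Y_1{\times}X_2 \cup_{X_1{\times}X_2} X_1{\times}Y_2$ onto $X_1{\times}X_2$ inside $Y_1{\times}Y_2$. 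Since the cartesian product of simplicial sets preserves pushouts in each variable, every object occurring below is a colimit of products of the sets $X_i$, $X_i'$, and $Y_i$.

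Next I would reduce both composites to the common form $X_1'{\times}X_2' \cup_{W_{12}} (Y_1{\times}Y_2)$, with left leg $(f_1{\times}f_2)\com(r_1{\times}r_2)$. For $(f_1{\times}f_2)_*(Y_1 \esm Y_2)$ this is immediate from the pasting law: stacking the pushout square defining $Y_1 \esm Y_2$ on top of the pushout square defining $(f_1{\times}f_2)_*$ exhibits the outer rectangle as the desired pushout, the intermediate object $X_1{\times}X_2$ cancelling. For $Z_1 \esm Z_2$, where $Z_i = f_{i*}Y_i$, I would first establish the key fact that the square with corners $W_{12}$, $Y_1{\times}Y_2$, $W_Z$, $Z_1{\times}Z_2$ --- where $W_Z = Z_1{\times}X_2' \cup_{X_1'{\times}X_2'} X_1'{\times}Z_2$ is the fat wedge of the $Z_i$ --- is a pushout; pasting this against the collapse square defining $Z_1 \esm Z_2$ then yields the same common form.

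This key fact is where lemma \ref{iteratedcolimlemma} does the work. I would apply it to the $3{\times}3$ grid obtained as the product of the two pushforward spans $X_i' \xleftarrow{f_i} X_i \xrightarrow{s_i} Y_i$, one span indexing the rows and the other the columns, so that the section maps become the up-pointing and right-pointing cofibrations $s_1{\times}\id$ and $\id{\times}s_2$, while $f_1,f_2$ carry the down- and left-pointing arrows. Because products preserve pushouts, forming the pushouts by columns (resp.\ rows) identifies the reduced spans \eqref{iteratedcolim1}--\eqref{iteratedcolim2} with $Z_1{\times}X_2' \leftarrow Z_1{\times}X_2 \to Z_1{\times}Y_2$ (resp.\ $X_1'{\times}Z_2 \leftarrow X_1{\times}Z_2 \to Y_1{\times}Z_2$), whose further pushout is $Z_1{\times}Z_2$; the hypothesis that $B_2 \cup_{B_1} C_1 \to C_2$ be a cofibration is here exactly the fat-wedge inclusion $W_{12} \hookrightarrow Y_1{\times}Y_2$. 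Identifying the colimit of the punctured grid (the grid with its $Y_1{\times}Y_2$ corner deleted) with $W_Z$, the total colimit $Z_1{\times}Z_2$ is the result of attaching the corner $Y_1{\times}Y_2$ along its boundary $W_{12}$, which is precisely the claim.

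I expect the main obstacle to be the bookkeeping in this last step: correctly matching the fat wedges $W_{12}$ and $W_Z$ with the appropriate partial colimits of the grid, checking that all structure maps agree on the nose, and verifying the cofibration hypotheses of lemma \ref{iteratedcolimlemma} so that the constructions remain inside $\cR_f(X_1'{\times}X_2')$. Once the common form is reached on both sides, the asserted isomorphism follows formally from the universal properties of the pushouts involved; this same formal argument delivers naturality in the pair $(Y_1, Y_2)$ and compatibility with sections and retractions, so that the comparison is an isomorphism of retractive spaces over $X_1'{\times}X_2'$.
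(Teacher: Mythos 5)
Your proposal is correct, and it reorganizes the paper's argument rather than reproducing it. The paper applies lemma \ref{iteratedcolimlemma} exactly once, to the single $3{\times}3$ diagram \eqref{naturality} whose top row is the defining span of $Y_1\esm Y_2$ and whose lower two rows are constant at $X_1{\times}X_2$ and $X_1'{\times}X_2'$: pushing out rows first yields $(f_1{\times}f_2)_*(Y_1\esm Y_2)$, columns first yields $f_{1*}Y_1\esm f_{2*}Y_2$, and the two iterated colimits are canonically isomorphic because both compute the colimit of the whole diagram. You instead run the lemma on the pure product grid $\{X_1',X_1,Y_1\}\times\{X_2',X_2,Y_2\}$, where, since $-\times B$ preserves pushouts, the column and row pushouts are \emph{literally} $Z_1{\times}B$ and $A{\times}Z_2$; from this you isolate the cocartesian square with horizontal cofibrations $W_{12}\rightarrowtail Y_1{\times}Y_2$ and $W_Z\rightarrowtail Z_1{\times}Z_2$ (where $W_{12}=Y_1{\times}X_2\cup_{X_1\times X_2}X_1{\times}Y_2$ and $W_Z$ is the fat wedge of the $Z_i=f_{i*}Y_i$), and then reduce both composites by the pasting law to the common pushout $X_1'{\times}X_2'\cup_{W_{12}}(Y_1{\times}Y_2)$ with left leg $(f_1{\times}f_2)\com(r_1{\times}r_2)$. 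What your route buys is that every intermediate identification is on the nose: in the paper's diagram the column pushouts are $W_{12}\cup_{X_1\times X_2}(X_1'{\times}X_2')$ and $Y_1{\times}Y_2\cup_{X_1\times X_2}(X_1'{\times}X_2')$, which differ dimensionwise from the displayed $W_Z$ and $Z_1{\times}Z_2$ (e.g.\ they contain $(Y_1{-}X_1)\times X_2$ rather than $(Y_1{-}X_1)\times X_2'$), so the paper's identification of the column span is really only valid after passing to the final pushout; your version also makes the compatibility of retractions and sections a formal consequence of having matched the spans exactly, and your key square (that $\esm$ commutes with cobase change along the sections) is independently reusable. What the paper's route buys is brevity: one diagram, one interchange. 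One caution: your punctured-grid step is not literally an instance of lemma \ref{iteratedcolimlemma}; identifying the colimit of the grid minus its $Y_1{\times}Y_2$ corner with $W_Z$, and the total colimit with the attachment of that corner along $W_{12}$, needs the standard decomposition of a colimit over the grid poset into a pushout over covering subposets, or, more simply, a dimensionwise check that both sides of your key square have complement $(Y_1{-}X_1)\times(Y_2{-}X_2)$ with matching face maps, in the style of the distributivity verification in the proof of theorem \ref{symmetricbimonoid}. With that bookkeeping supplied, your argument is complete.
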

\begin{proof}
For the naturality property of the external smash product, consider the diagram
\begin{equation}
  \label{naturality}
  \xy \UCMT \xymatrix{
X_1{\times}X_2  
           &    \ar[l]  Y_1{\times}X_2 \cup_{X_1\times X_2} X_1{\times}Y_2 \ar@{ >->}[r]   
                                & Y_1{\times}Y_2
\\
X_1{\times}X_2   \ar@{ >->}[u] \ar[d]_{f_1\times f_2} 
           &  \ar[l]  X_1{\times}X_2     \ar@{ >->}[u]  \ar@{ >->}[r] \ar[d]_{f_1\times f_2}  
                               & X_1{\times}X_2  \ar@{ >->}[u] \ar[d]_{f_1\times f_2}
\\
X_1'{\times}X_2'        
            &   \ar[l]  X_1'{\times}X_2'    \ar@{ >->}[r]
                                & X_1'{\times}X_2'
}
\endxy
\end{equation}  
which fulfills the hypotheses of lemma \ref{iteratedcolimlemma}.  Computing the colimits of the columns in this diagram
yields the diagram
\begin{equation*}
  \xy \UCMT \xymatrix{
  X_1'{\times}X_2' & \ar[l]_(0.72){r_1'\times r_2'}  (f_{1*}Y_1) {\times} X_2' \cup_{X_1'\times X_2'}X_1'{\times} (f_{2*}Y_2) \ar@{ >->}[r] & f_{1*}Y_1 \times f_{2*}Y_2,
}
\endxy
\end{equation*}
whose pushout is by definition $ f_{1*}Y_1 \esm f_{2*}Y_2 $.

On the other hand computing the colimits of the rows in the diagram yields the diagram
\begin{equation*}
  \xy \UCMT \xymatrix{
  X_1'{\times}X_2' &  \ar[l]_{f_1 \times f_2} X_1{\times}X_2  \ar@{ >->}[r] &   Y_1{\esm}Y_2,
}
\endxy
\end{equation*}
whose pushout is $(f_1{\times}f_2)_*(Y_1\esm Y_2)$. Since both iterative procedures compute the colimit of  diagram \eqref{naturality},
they are canonically isomorphic:
\begin{equation*}
   f_{1*}Y_1 \esm f_{2*}Y_2 \iso (f_1{\times}f_2)_*(Y_1\esm Y_2). \qedhere
\end{equation*}
\end{proof}
As a consequence, we have the following result.
\begin{proposition} \label{internalizing}
Let $X$ be a monoid with unit.  
The action of $\cR(*)$ on $\cR(X)$ set up in example \ref{esmaction} may be 
made internal to $\cR(X)$. Diagramatically, the diagram 
\begin{equation*}
  \xy \UCMT \xymatrix{
\cR(*) \times \cR(X) \ar[r]^{i_{e*}\times \id}  \ar[dr]^{\esm}  & \cR(X) \times \cR(X) \ar[d]^{\sma}
\\
  &  \cR(X) }
\endxy
\end{equation*}
commutes up to natural isomorphism. 
\end{proposition}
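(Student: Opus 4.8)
The plan is to deduce this directly from the functoriality of the exterior smash product recorded in Proposition \ref{extprodnaturality}, together with the single fact that the identity element $e$ of $X$ is a strict unit for $\mu$. Throughout write $i_e \colon \{*\} \ra X$ for the inclusion of $e$, so that $i_{e*} \colon \cR(*) \ra \cR(X)$ is the associated base-change functor appearing in the statement.

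First I would name the two functors to be compared. By Example \ref{esmaction} the diagonal arrow is the action $(Y_1, Y_2) \mapsto Y_1 \esm Y_2$, formed in $\cR(\{*\}{\times}X)$ and then transported to $\cR(X)$ along the canonical projection $\pi \colon \{*\}{\times}X \xrightarrow{\iso} X$; that is, it is $\pi_* \com \esm$. By the definition \eqref{abelianpairing} of $\sma$, the composite around the other two edges is $\mu_* \com \esm \com (i_{e*}{\times}\id)$. So the claim amounts to a natural isomorphism $\pi_* \com \esm \iso \mu_* \com \esm \com (i_{e*}{\times}\id)$.

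Next I would apply Proposition \ref{extprodnaturality} with $f_1 = i_e$ and $f_2 = \id_X$ (the finiteness hypothesis there plays no role in the naturality isomorphism, which holds verbatim on all of $\cR$). This gives a natural isomorphism $(i_{e*}Y_1)\esm Y_2 \iso (i_e{\times}\id)_*(Y_1 \esm Y_2)$ in $\cR(X{\times}X)$. Applying $\mu_*$ and then collapsing the resulting composite of base-change functors via their pseudofunctoriality ($g_* \com f_* \iso (g \com f)_*$, since iterated pushforwards along sections compute a single pushout) yields natural isomorphisms
\[
\mu_*\bigl((i_{e*}Y_1)\esm Y_2\bigr) \iso \mu_*(i_e{\times}\id)_*(Y_1 \esm Y_2) \iso \bigl(\mu \com (i_e{\times}\id)\bigr)_*(Y_1 \esm Y_2).
\]
It remains to identify the base map $\mu \com (i_e{\times}\id) \colon \{*\}{\times}X \ra X$. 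On simplices it sends $(*,x) \mapsto \mu(e,x) = x$, so by the strict unit axiom it is literally the projection $\pi$. Hence $\bigl(\mu\com(i_e{\times}\id)\bigr)_* = \pi_*$, and the chain above reads $\mu_* \com \esm \com (i_{e*}{\times}\id) \iso \pi_* \com \esm$, which is exactly the action.

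I do not anticipate a genuine obstacle: all the real content sits in Proposition \ref{extprodnaturality}. The only points demanding care are the bookkeeping of which base space each object lives over — sorting out $\cR(*)$, $\cR(X)$, $\cR(\{*\}{\times}X)$, and $\cR(X{\times}X)$ — and checking that the pseudofunctoriality isomorphism for base change is natural in $(Y_1, Y_2)$, so that the composite of natural isomorphisms is itself natural and the triangle commutes up to natural isomorphism.
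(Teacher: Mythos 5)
Your proof is correct and follows essentially the same route as the paper: the paper also decomposes the triangle into a square commuting by proposition \ref{extprodnaturality} (with $f_1 = i_e$, $f_2 = \id_X$) and a final identification $\mu \com (i_e{\times}\id) = p_2$ that holds because $e$ is the group identity. Your added remarks on pseudofunctoriality of base change and on the irrelevance of the finiteness hypothesis are sound bookkeeping that the paper leaves implicit.
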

\begin{proof}
  Let $i_e \colon \{*\} \ra X$ be the inclusion of the one point space to the identity element of the monoid $X$.
The functor $i_{e*} \colon \cR(*) \ra \cR(X)$ sends a pointed retractive space $Y$ to $X \wed Y$, where the
base point of $Y$ is identified to the unit element of $X$.  The new retraction collapses $Y \subset X \wed Y$ to the identity 
$\{e\}$ in $X$.  We have the diagram
\begin{equation*}
  \xy \UCMT \xymatrix{
\cR(*) \times \cR(X) \ar[r]^(0.55){\esm} \ar[d]_{i_{e*}\times \id}& \cR(\{*\}{\times} X) \ar[d]_{(i_e{\times} \id)_*} \ar[dr]^{p_{2*}}
\\
\cR(X) \times \cR(X) \ar[r]^(0.55){\esm} &  \cR(X{\times} X) \ar[r]^(0.55){\mu_*}  & \cR(X).
}
\endxy
\end{equation*}
The lefthand square commutes by proposition \ref{extprodnaturality}, and the righthand triangle commutes because $e$ is the
monoid identity.
The bottom row defines $\sma$ and the trip across the top defines the action of $\cR(*)$ on $\cR(X)$. 
\end{proof}
For example, this result has the consequence that coherent associativity for $\sma$ 
implies corresponding coherent associativity for the $\esm$ action of $\cR(*)$ on $\cR(X)$. 

Next we record the biexactness property of the external smash product as defined
in the statement of theorem \ref{symmetricbimonoid}. 
\begin{proposition}
  \label{externalbiexactness}
The external smash product functor 
\begin{equation*}
\esm \colon  \cR_f(X_1) \times \cR_f(X_2) \lra \cR_f(X_1{\times}X_2)
\end{equation*}
is biexact.   
\end{proposition}
\begin{remark}
  In the approach of \cite{MaySigurdssonPHT} the external smash product is shown to preserve all colimits 
by exhibiting a left adjoint functor.  Their approach uses properties of convenient categories of topological spaces. 
\end{remark}
  For our applications in algebraic $K$-theory it seems more 
reasonable to give arguments modeled on those of
\cite[Lemma 1.1.1]{Waldhausen85},
which serve to illuminate other constructions we make.
\begin{proof}[Proof of proposition \ref{externalbiexactness}]
 For simplicial sets, cofibrations are precisely the injections.  Given a pair of cofibrations
\begin{equation*}
  \xy \UCMT \xymatrix{ (W_1, r_1, s_1) \ar@{ >->}[r] & (W_1', r'_1, s'_1)} \endxy
 \quad \text{and} \quad
  \xy \UCMT \xymatrix{ (W_2, r_2, s_2) \ar@{ >->}[r] & (W'_2, r'_2, s'_2)} \endxy 
\end{equation*}
in $\cR_f(X_1)$ and $\cR_f(X_2)$, respectively, 
the maps of differences of simplicial sets $W_1{-}X_1 \ra W_1'{-}X_1$ and $W_2{-}X_2 \ra W'_2{-}X_2$ are injective maps of 
sets in each simplicial dimension.   The product of these maps is also injective.
Since $(W_1{\esm}W_2){-}X_1{\times}X_2 = (W_1{-}X_1) \times (W_2{-}X_2)$, it follows
that 
$\xy \UCMT \xymatrix{W_1 \esm W_2 \ar@{ >->}[r]  &  W_1' \esm W'_2} \endxy$
is also a cofibration. Finally, if  $W_1{-}X_1$ and $W_2{-}X_2$ contain only finitely many 
non-degenerate simplices, then the same is true of their product.  Thus, 
the pairing $\esm$ restricts to a pairing of $\cR_f(X_1) \times \cR_f(X_2)$ to $\cR_f(X_1{\times}X_2)$. 

To prove that the functor $Z\esm (-) \colon R_f(X_2) \ra R_f(X_1{\times}X_2)$ preserves pushouts of cofibrations, 
start by considering the diagram
\begin{equation} \label{biexactnessmaindiagram}
  \xy \UCMT \xymatrix{
 X_1{\times} X_2  \ar@{=}[d]    &  \ar[l]  Z{\times}X_2 \cup_{X_1{\times}X_2} X_1{\times}Y_2   \ar@{ >->}[r] &  Z \times Y_2 
\\
 X_1{\times}X_2    \ar@{=}[d]   
            &  \ar[l]  Z{\times}X_2 \cup_{X_1{\times}X_2} X_1{\times}Y_1   \ar@{ >->}[r] \ar@{ >->}[u] \ar[d]  
                     &  Z \times Y_1 \ar@{ >->}[u] \ar[d] 
\\
 X_1{\times}X_2      &  \ar[l]  Z{\times}X_2 \cup_{X_1{\times}X_2} X_1{\times}Y_0   \ar@{ >->}[r] &  Z \times Y_0 
}
\endxy
\end{equation}
where the right-pointing arrows are induced from the retractions and the left-pointing arrows are induced by inclusions.
We verify the cofibration hypothesis of lemma \ref{iteratedcolimlemma} using the following diagram to analyze the upper
righthand corner of diagram \eqref{biexactnessmaindiagram}.
\begin{equation*}
  \xy \UCMT \xymatrix{ 
& Z{\times}X_2\cup_{X_1{\times}X_2} X_1{\times}Y_2   \ar@{ >->}[dl]  
        & \ar[l]_(.6){\iso} \bigl( Z{\times}X_2 \cup_{X_1{\times}X_2}X_1{\times}Y_1\bigr) \cup_{X_1{\times}Y_1} X_1{\times}Y_2 
                & \ar[l] X_1{\times}Y_2 
\\
Z{\times}Y_2 & Z{\times}X_2 \cup_{X_1{\times}X_2} X_1{\times}Y_1     \ar@{ >->}[u]  \ar@{ >->}[d]
              &  \ar[l]_{=} Z{\times}X_2 \cup_{X_1{\times}X_2} X_1{\times}Y_1    \ar@{ >->}[u] \ar@{ >->}[d]
                     & \ar[l]  X_1{\times}Y_1 \ar@{ >->}[u] \ar@{ >->}[d] 
\\
& Z{\times}Y_1  \ar@{ >->}[ul]      &  \ar[l]_{=}  Z{\times}Y_1   &  \ar[l]_{=} Z{\times}Y_1 
}
\endxy
\end{equation*}
Pass to pushouts in the columns, apply the universal mapping properties of the pushouts, and use isomorphism \eqref{pushoutpushout}
to simplify the pushout of the middle column to obtain the following commuting diagram. 
\begin{equation*}
  \xy \UCMT \xymatrix{
   Z{\times}Y_1 \cup_{X_1{\times}Y_1} X_1{\times}Y_2  \ar[d]_{\iso}
      &   \ar[l]_{\iso}   Z{\times}Y_1 \cup_{X_1{\times}Y_1} X_1{\times}Y_2  \ar@{ >->}[d]  
\\
Z{\times}Y_1  \cup_{(Z{\times}X_2 \cup_{X_1{\times}X_2} X_1{\times}Y_1)} (Z{\times}X_2\cup_{X_1{\times}X_2} X_1{\times}Y_2) \ar[r]
  &  Z \times Y_2  
}
\endxy
\end{equation*}
The space $Z{\times}Y_1 \cup_{X_1{\times}Y_1} X_1{\times}Y_2$ is a subspace of $Z \times Y_2$, so the downward arrow on the
right is a cofibration.  Since isomorphisms are cofibrations, it follows that the lower arrow is also a cofibration.  
Thus, we have verified the cofibration condition of lemma \ref{iteratedcolimlemma} for diagram \eqref{biexactnessmaindiagram}.

We may now calculate the colimit of diagram  \eqref{biexactnessmaindiagram} by two different iterative procedures.
Computing the pushouts of the rows first and applying  lemma \ref{iteratedcolimlemma}  gives a diagram
\begin{equation} \label{rowsfirst}
  \xy \UCMT \xymatrix{ Z \esm Y_2 &  \ar@{ >->}[l] Z \esm Y_1  \ar[r] &  Z \esm Y_0} \endxy 
\end{equation}
and calculating the pushouts of the columns first and applying lemma
\ref{iteratedcolimlemma} again gives a another diagram
\begin{equation}  \label{colsfirst}
  \xy \UCMT \xymatrix{ X_1{\times}X_2 
           &   \ar[l] Z{\times}X_2 \cup_{X_1{\times}X_2} X_1{\times} \bigl( Y_0 \cup_{Y_1} Y_2 \bigr)  \ar@{ >->}[r] 
                        &  Z \times \bigl( Y_0 \cup_{Y_1} Y_2 \bigr).} 
\endxy
\end{equation}
To see this formula for the middle object in diagram \eqref{colsfirst}, make the following considerations. 
We have the  diagram 
\begin{equation} \label{itcolimaux}
 \xy \UCMT \xymatrix{
Z_2{\times}X_2 & \ar[l] X_1{\times}X_2 \ar@{ >->}[r] &  X_1{\times}Y_2
\\
Z_2{\times}X_2 \ar@{ >->}[u]^{=} \ar[d]_{=} 
            & \ar[l]  X_1{\times}X_2 \ar@{ >->}[r] \ar@{ >->}[u]^{=} \ar[d]_{=}  
                       & X_1{\times}Y_1 \ar@{ >->}[u] \ar[d]
\\
Z_2{\times}X_2 & \ar[l]  X_1{\times}X_2 \ar@{ >->}[r] & X_1{\times}Y_0
}
\endxy
\end{equation} 
meeting the conditions of lemma \ref{iteratedcolimlemma}, whose colimit we also compute iteratively. 
Computing the pushouts of the rows first gives precisely the middle column in diagram \eqref{biexactnessmaindiagram},
whose pushout we are now evaluating. 
On the other hand, computing the pushouts along the columns first gives a diagram
\begin{equation*}
  \xy \UCMT \xymatrix{
Z_2{\times}X_2 &  \ar[l]  X_1{\times}X_2 \ar@{ >->}[r] & X_1{\times}(Y_2 \cup_{Y_1} Y_0) 
}
\endxy
\end{equation*}
whose pushout is the middle term displayed in diagram \eqref{colsfirst}.  As the iterated pushouts of diagram \eqref{itcolimaux}
are isomorphic to the colimit of the entire diagram, the iterated pushouts are isomorphic.  This justifies diagram \eqref{colsfirst}.  
 
Completing the analysis of diagram \eqref{biexactnessmaindiagram}, the pushouts of these diagrams 
\eqref{rowsfirst} and \eqref{colsfirst} are isomorphic, because they both represent the colimit of the original diagram 
\eqref{biexactnessmaindiagram}.  Interpreting this statement, we have the result that $Z \esm -$ preserves
pushouts of cofibrations.

  Suppose $f_1 \colon Y_1 \ra Y_1'$ and $f_2 \colon Y_2 \ra Y_2'$ are weak equivalences in $\cR_f(X_1)$ and $\cR_f(X_2)$, respectively.  
That is, the geometric realizations $|f_1|$ and $|f_2|$ are homotopy equivalences. Then $|f_1|{\times}\id_{|X_2|}$ and $\id_{|X_1|}{\times}|f_2|$
are homotopy equivalences.  By the ordinary gluing lemma for homotopy equivalences applied to the diagram
\begin{equation*}
  \xy \UCMT \xymatrix{
|Y_1| \times |X_2| \ar[d]_{|f_1|\times \id_{|X_2|}} 
             &   \ar@{ >->}[l]  {|X_1| \times |X_2|}  \ar@{ >->}[r] \ar[d]_{\id_{|X_1|} \times \id_{|X_2|}}  
                                              &   |X_1| \times |Y_2| \ar[d]_{\id_{|X_1|} \times |f_2|}
\\
|Y'_1| \times |X_2|  &   \ar@{ >->}[l] {|X_1| \times |X_2|}  \ar@{ >->}[r] &   |X_1| \times |Y'_2|
}
\endxy
\end{equation*}
the central arrow in the diagram below is also a homotopy equivalence.
\begin{equation*}
  \xy  \UCMT \xymatrix{
{|X_1| \times |X_2|}  \ar[d]_{\id_{|X_1|} \times \id_{|X_2|}} 
                 &   \ar[l]  {|Y_1|{\times}|X_2| \cup_{|X_1|{\times}|X_2|} |X_1|{\times}|Y_2|}  \ar@{ >->}[r] \ar[d]_{\heq}
                            &  {|Y_1| \times |Y_2|}  \ar[d]_{|f_1|\times|f_2|}
\\
{|X_1| \times |X_2|}
                 &   \ar[l]   {|Y'_1|{\times}|X_2| \cup_{|X_1|{\times}|X_2|} |X_1|{\times}|Y'_2|}  \ar@{ >->}[r]
                            &   {|Y'_1| \times |Y'_2|}
}
\endxy
\end{equation*}
Since the pushout of the last diagram is homeomorphic to $|Y_1 {\esm} Y_2| \ra \abs{Y_1' {\esm} Y_2'}$    (``colimits commute''),
$Y_1 \esm Y_2 \ra  Y_1' \esm Y_2'$ is a weak equivalence.
\end{proof}
\begin{remark}
  The external smash product will also preserve many colimits.  However, our applications principally involve the
special colimits that are pushouts of cofibration squares. 
\end{remark}
Here is the postponed proof of lemma \ref{iteratedcolimlemma}.
\begin{proof}[Proof of lemma \ref{iteratedcolimlemma}]
We make frequent use of the isomorphism
\begin{equation}  \label{pushoutpushout}
  ( A \cup_B C ) \cup_C D \iso A \cup_B D.
\end{equation}
  The canonical arrow 
$\xy \UCMT \xymatrix{ B_2 \cup_{B_1} B_0  \ar[r] & C_2\cup_{C_1} C_0 } \endxy $
factors into the composition of canonical arrows induced by passing to pushouts of the columns in the map of diagrams
\begin{equation*}
  \xy \UCMT \xymatrix{
  B_2  \ar[r]^{=} &    B_2  \ar@{ >->}[r] & C_2
\\
  B_1   \ar@{ >->}[u]  \ar[r]^=  \ar[d] 
            &   B_1  \ar@{ >->}[r] \ar@{ >->}[u] \ar[d]  
                     &   C_1 \ar@{ >->}[u] \ar[d] 
\\
  B_0    \ar@{ >->}[r]   &    C_0   \ar[r]^= &  C_0 
}
\endxy
\end{equation*}
and we show each arrow in the factorization is a cofibration, as follows. The first arrow in the factorization 
appears as the lower row in the completed pushout diagram
\begin{equation*}
  \xy \UCMT \xymatrix{
B_0 \ar@{ >->}[r] \ar[d] & C_0 \ar[d]
\\
B_2 \cup_{B_1} B_0 \ar@{ >->}[r] & (B_2 \cup_{B_1} B_0)\cup_{B_0} C_0 \ar[r]^(0.6){\iso} & B_2 \cup_{B_1} C_0}
\endxy
\end{equation*}
augmented by an isomorphism, so the first arrow is a cofibration, as claimed.  
From the hypothesis on the canonical map from $B_2\cup_{B_1} C_1$ to $C_2$, the upper arrow in the next diagram 
is a cofibration, so the lower arrow in the completed pushout diagram is as well.  
\begin{equation*}
  \xy \UCMT  \xymatrix{
 & B_2 \cup_{B_1} C_1 \ar@{ >->}[r] \ar[d]  &   C_2   \ar[d]   &
\\
B_2 \cup_{B_1} C_0 & \ar[l]_(0.6){\iso} \bigl( B_2 \cup_{B_1} C_1 \bigr)  \cup_{C_1} C_0   \ar@{ >->}[r] 
              &  C_2 \cup_{B_2 \cup_{B_1} C_1} \bigl(( B_2 \cup_{B_1} C_1) \cup_{C_1} C_0\bigr)  \ar[r]^(0.7){\iso}  & C_2 \cup_{C_1} C_0
}
\endxy
\end{equation*}
Augmenting the completed pushout diagram by the two isomorphisms, the second arrow 
$\xy \UCMT  \xymatrix{ B_2 \cup_{B_1} C_0 \ar[r] & C_2 \cup_{C_1} C_0 } \endxy $
in the factorization is also a cofibration.  
Then the composition 
$ \xy \UCMT \xymatrix{ B_2 \cup_{B_1} B_0 \ar@{ >->}[r] & B_2 \cup_{B_0} C_0 \ar@{ >->}[r] &   C_2 \cup_{C_1} C_0 } \endxy$
is a cofibration and this arrow is isomorphic to the arrow in diagram \eqref{iteratedcolim1}. 

To obtain the result for the row-wise pushouts from the result for column-wise pushouts, observe that 
the properties of the arrows in the diagram are symmetric with respect to reflection in the diagonal $A_0B_1C_2$.
Therefore, it suffices to reflect the diagram in this diagonal and apply the column-wise result.
\end{proof}
\begin{proof}[Proof of the second part of theorem \ref{symmetricbimonoid}]
Since the functor $\mu_* \colon \cR_f(X{\times}X) \ra \cR_f(X)$ is exact
\cite[Lemma 2.1.6]{Waldhausen85}, and we have seen that $\esm$ is biexact in proposition \ref{externalbiexactness},
the composite $\sma = \mu_* \com \esm$ is biexact. 
\end{proof}
Now we take up coherence properties.
\begin{proof}[Proof of the first part of theorem \ref{symmetricbimonoid}]
It is well-known that disjoint union of sets and the one point union $\wed$ of pointed sets are categorical sum
operations, so that all coherence conditions for these operations are automatically met.  For the category 
of sets containing a fixed set $S$ the union $\wed_S$ of two sets along the common subset is also the categorical sum, so
$\wed_S$ fulfills all coherence conditions.  Concerning products, the cartesian product of sets and the smash product of
pointed sets are operations also meeting coherence conditions.  When these operations of sum and product are considered
together, they are related by distributivity isomorphisms, and the combined systems exhibit the coherence properties
discussed in \cite{LaPlazaCoh1}.  It is possible to develop the coherence properties we need for operations on retractive spaces
from these basic elements by developing the operations $\wdX$ and $\esm$ dimensionwise and pointwise over 
$X$, resp., $X_1{\times}X_2$ in the case of products, from one point union and smash product of pointed sets. 
Compare the remark following definition \ref{externalsmash}.  We take a different approach here.

For the sum $\wdX$, we need a slight extension of the union of sets along a common subset 
to cover the case of the disjoint union of two simplicial sets along a common simplicial subset. 
Let $\cT$ be the category of triples $(T, r \colon T \ra S, s \colon S \ra T)$, where $S$ and $T$ are sets
and the functions satisfy $r\com s = \id_S$.  Occasionally, it is convenient to view $S$ as a subset of $T$.  A morphism 
\begin{equation*}
 (f, f') \colon (T_1, r_1 \colon T_1 \ra S_1, s_1 \colon S_1 \ra T_1) \ra (T_0, r_0 \colon T_0 \ra S_0, s_0 \colon S_0 \ra T_0)
\end{equation*}
is a pair of maps $f \colon T_1 \ra T_0$ and $f' \colon S_1 \ra S_0$ such that $s_0 f' =  f s_1$ and $ r_0f = f'r_1$.
A object $(Y,r,s)$ of $\cR(X)$ can be viewed as a functor $\Delta^{\rm op} \ra \cT$, and conversely.
There is a functor $u \colon \cT \ra \text{{\bf Set}}$ that selects the subset $S$ and morphisms $f' \colon S_1 \ra S_0$.
On the pullback category
\begin{equation*}
  \xy \UCMT \xymatrix{
\cT \times_{\text{{\bf Set}}} \cT \ar[r] \ar[d]   &     \cT \times \cT \ar[d]_{u \times u}
\\
\text{{\bf Set}} \ar[r]^(0.45){\Delta}    &  \text{{\bf Set}}  \times \text{{\bf Set}}
} 
\endxy 
\end{equation*}
define the operation 
$(T_1, r_1 \colon T_1 \ra S, s_1 \colon S \ra T_1) \wed_{S} (T_2, r_2 \colon T_2 \ra S, s_2 \colon S \ra T_2)$, 
abbreviated $(T_1,r_1, s_1) \wed_S (T_2, r_2, s_2)$, or even $T_1 \wed_S T_2$.
Set
\begin{equation*}
  T_1 \wed_S T_2 = T_1 \amalg T_2/\sim  ,
\end{equation*}
where $\sim$ is the equivalence relation generated by setting $s_1(x)\sim s_2(x)$ for $x \in S$.
Set $i_j \colon T_j \ra T_1 \wed_S T_2$ to be the inclusion $T_j \ra T_1 \amalg T_2$ followed by the quotient map to $T_1 \wed_s T_2$. 
For the rest of the structure, set
\begin{align*}
 r \colon &T_1 \wed_S T_2 \ra S,  
\intertext{to be the unique function satisfying $ri_j = r_j$,  for $j = 1, \, 2$ and let}
  s \colon &S \ra T_1 \wed_S T_2 
\end{align*}
satisfy $s(x)= i_1s_1(x) = i_2 s_2 (x)$ for $x \in S$. 
Define $(i_1, i_1'=\id) \colon (T_1, s_1, r_1) \ra (T_1 \wed_S T_2, r, s)$ to obtain a morphism in $\cT$.
The identities $r i_1 = i_1' r_1$ and $s i_1' = i_1 s_1$ are satisfied by definition and by the condition $r_1 s_1 = \id$. 
Define $(i_2, i_2') \colon (T_2, s_2, r_2) \ra (T_1 \wed_S T_2, r,s )$  similarly.  If $(T', r', s' )$
is another object of $\cT$, and suppose  $(f_i,f_i') \colon (T_i, r_i, s_i) \ra (T', r', s') $ is a morphism in 
$\cT \times_{\text{\bf Set}} \cT$ for $i = 1, 2$.  This just means that $f_1' = f_2'  \colon S \ra S'$. 
Then the categorical sum properties of the disjoint union on the category $\text{\bf Set}$ and the quotient construction
deliver a unique morphism
 \begin{equation*}
   (h, h') \colon (T_1\wed_S T_2, r', s') \ra (T',r',s')
 \end{equation*}
such that $(h,h') \com (i_1, i_1') = (f_1, f_1')$ and $(h,h') \com (i_2, i_2') = (f_2, f_2')$.
When the base set is fixed, we obtain a categorical sum;
in general, when the base set varies, we obtain a (partially defined) categorical sum on $\cT$. 

We have observed that an object of the category $\cR(X)$ is a simplicial object in the category $\cT$, that is,  
a functor $\Delta^{\rm op} \ra \cT$.  A pair of objects 
$(Y_1, r_1, s_1)$ and $(Y_2, r_2, s_2)$ in $\cR(X)$ defines a functor
$\Delta^{\rm op}  \ra \cT\times_{\text{\bf Set}} \cT$.  
We obtain the operation 
$(Y_1, r_1, s_1) \wdX (Y_2, r_2, s_2)$ 
 based on the dimension-wise operation $(Y_1)_p \wed_{X_p} (Y_2)_p$. 
This makes $\wdX $  a categorical sum in $\cR(X)$, with unit (zero element, thinking additively) the space $X$.
The commutativity isomorphisms $\gamma'$, associativity isomorphims $\alpha'$, 
left and right unit isomorphisms $\lambda'$ and $\rho'$ are straightforward
consequences of the analogous properties of the disjoint union operation on sets.
Essentially, all the basic properties required for coherence of the sum operation $\wdX$ are automatically fulfilled.  
That $\wdX$ is 
the categorical sum simplifies almost all coherence considerations involving diagrams involving both 
$\wdX$ and $\sma$. 

To complete the input for LaPlaza's coherence result we need to identify  in $\cR(X)$ an additive identity,  a multiplicative zero
element, a multiplicative identity, commutativity and associativity isomorphisms for 
$\sma$, and, finally, distributivity isomorphisms.  

Clearly $(X, \id, \id)$ is the identity for $\wdX$. 
Example \ref{zeroobject} implies that $(X, \id, \id)$ is a zero object from the left and the right for $\sma$,
in the sense that there are natural isomorphims 
$\lambda_Y^* \colon X \sma Y \ra X$ and  $\rho_Y^* \colon Y \sma X \ra X$.
Example \ref{esmaction} combined with proposition \ref{internalizing} delivers the fact that
$i_{e*}(S^0) = X{\wed}S^0$, where the base point of $S^0$ is identified with the multiplicative identity of $X$ and the retraction
collapses $S^0$ to the identity of $X$, is a multiplicative identity in the sense that there are natural isomorphisms
$\lambda_Y \colon (X{\wed}S^0)\sma Y \ra Y$ and $\rho_Y \colon Y \sma (X{\wed}S^0) \ra Y$.

For commutativity of the product $\sma = \mu_*\com \esm$, we have the following considerations.
Use commutativity for cartesian products and apply the definitions  from \eqref{intsmashprod} of the internal smash product
to obtain the following diagram. 
\begin{equation}  \label{multcommutativity}
  \xy \UCMT \xymatrix{
X \ar@{=}[d] &  \ar[l]_{\mu} X \times X \ar[d]_{\gamma} & \ar[l]  Y_1{\times}X \cup_{X{\times}X} X{\times}Y_2   \ar@{ >->}[r] \ar[d]_{\gamma} & Y_1 \times Y_2 \ar[d]_{\gamma}
\\
X  &  \ar[l]_{\mu} X \times X &  \ar[l]  Y_2{\times}X \cup_{X{\times}X} X{\times}Y_1   \ar@{ >->}[r]                    & Y_2 \times Y_1}
\endxy
\end{equation}
In the diagram the arrows labeled $\gamma$ are the isomorphisms switching the factors in the cartesian products. 
Note that 
$ r_1\sma r_2 = \mu_*(r_1\esm r_2) = r_1\cdot r_2 = r_2\cdot r_1 = \mu_*(r_2 \esm r_1) = r_2 \sma r_1$,
since $X$ is abelian.  
Passage to pushouts yields an isomorphism
\begin{equation*}
  \gamma_{Y_1,Y_2} \colon (Y_1{\sma}Y_2, r_1{\sma}r_2, s_1{\sma}s_2) \stackrel{\iso}{\lra}  (Y_2{\sma}Y_1, r_2{\sma}r_1, s_2{\sma}s_1)
\end{equation*}
It is easily seen that $\gamma_{Y_2,Y_1}\gamma_{Y_1,Y_2} = \id$ holds, often written ``$\gamma^2= \id"$ and called the inverse law, 
and that the left and right unit laws are compatible. These facts are recorded in the following commuting diagrams. 
\begin{equation*}
  \xy \UCMT   \xymatrix@R-1.5em{
   &   Y_2 \sma Y_1  \ar[dr]^{\gamma_{Y_2, Y_1}} 
            & &  Y \sma (X{\wed}S^0) \ar[rr]^{\gamma_{Y,X\wed S^0}} \ar[dr]_{\rho_Y} 
                                 & & (X {\wed} S^0) \sma Y \ar[dl]^{\lambda_Y} 
\\
Y_1 \sma Y_2  \ar[ur]^{\gamma_{Y_1, Y_2}}  \ar@{=}[rr] &  &  Y_1 \sma Y_2  & &  Y
}
\endxy
\end{equation*}

Consider now associativity, for which we use the following diagram.
\begin{equation}  \label{assoc1}
  \xy \UCMT  \xymatrix@R=3ex@C+3ex{
    &  \bigl((Y_1 {\times} Y_2) {\times} X \bigr) {\cup} \bigl((Y_1 {\times} X){\times} Y_3\bigr) {\cup} \bigl((X {\times} Y_2){\times} Y_3  \bigr)
             \ar[dl]|{\mu( \mu(r_1, r_2), \id) \cup \mu(\mu( r_1, \id ), r_3) \cup \mu(\mu(\id, r_2), r_3)} \ar@{ >->}[r]
                    &   (Y_1 \times Y_2) \times Y_3
\\
X    &   &
\\
       &   \bigl( (Y_1 {\times} X)  {\times} X\bigr) \cup \bigl((X {\times} Y_2) {\times} X\bigr)    \ar@{ >->}[uu] 
               \ar[dd]|{(\mu(r_1, \id), \id)\cup (\mu(\id , r_2), \id)}
                 \ar[dl]|{\mu( \mu(r_1, \id) , \id) \cup \mu( \mu( \id, r_2), \id)}   \ar@{ >->}[r]^{=}
                    &  \bigl( (Y_1 {\times} X)  {\times} X\bigr) \cup\bigl( (X {\times} Y_2) {\times} X  \bigr)  \ar@{ >->}[uu]  
                \ar[dd]|{(\mu(r_1, \id), \id)\cup (\mu(\id , r_2), \id)}
\\
X    \ar@{ >->}[uu]_{=} \ar[dd]^{=} &   &
\\
       &   X \times X \ar[dl]_{\mu}  \ar@{ >->}[r]^{=}
                    &   X \times X 
\\
X  &   &   }
\endxy
\end{equation}
The point is that the associativity for $\sma$ rests on associativity for $\times$, $\cup$, and associativity of the 
multiplication $\mu$ on $X$.
By passage to colimits we obtain associativity for $\sma$.
For the usual smash product, associativity for cartesian products passes to associativity for smash products;
our argument is similarly structured. 
%
% Consider the following diagram
% \begin{equation}  \label{assoc1}
%   \xy \UCMT  \xymatrix@R=3ex@C+3ex{
%     &  \bigl((Y_1 {\times} Y_2) {\times} X \bigr) {\cup} \bigl((Y_1 {\times} X){\times} Y_3\bigr) {\cup} \bigl((X {\times} Y_2){\times} Y_3  \bigr)
%              \ar[dl]|{\mu( \mu(r_1, r_2), \id) \cup \mu(\mu( r_1, \id ), r_3) \cup \mu(\mu(\id, r_2), r_3)} \ar@{ >->}[r]
%                     &   (Y_1 \times Y_2) \times Y_3
% \\
% X    &   &
% \\
%        &   \bigl( (Y_1 {\times} X)  {\times} X\bigr) \cup \bigl((X {\times} Y_2) {\times} X\bigr)    \ar@{ >->}[uu] 
%                \ar[dd]|{(\mu(r_1, \id), \id)\cup (\mu(\id , r_2), \id)}
%                  \ar[dl]|{\mu( \mu(r_1, \id) , \id) \cup \mu( \mu( \id, r_2), \id)}   \ar@{ >->}[r]^{=}
%                     &  \bigl( (Y_1 {\times} X)  {\times} X\bigr) \cup\bigl( (X {\times} Y_2) {\times} X  \bigr)  \ar@{ >->}[uu]  
%                 \ar[dd]|{(\mu(r_1, \id), \id)\cup (\mu(\id , r_2), \id)}
% \\
% X    \ar@{ >->}[uu]_{=} \ar[dd]^{=} &   &
% \\
%        &   X \times X \ar[dl]_{\mu}  \ar@{ >->}[r]^{=}
%                     &   X \times X 
% \\
% X  &   &   }
% \endxy
% \end{equation}
 The first step is to obtain an expression for $(Y_1 \sma Y_2)\sma Y_3$ that involves only cartesian products and colimits.
Diagram \eqref{assoc1} fulfills the hypotheses of lemma \ref{iteratedcolimlemma}, so we may calculate the colimit
iteratively in two ways.   Taking the colimit along the columns produces the diagram
\begin{equation*}
  \xy \UCMT \xymatrix@C+=14ex{
X  
     &  \ar[l]_(0.68){\mu(r_{12}, \id) \cup \mu(\id, r_3)}   \bigl( (Y_1{\sma} Y_2) \times X\bigr) \cup \bigl( X \times Y_3 \bigr) \ar@{ >->}[r] 
              & (Y_1{\sma} Y_2) \times Y_3
}
\endxy
\end{equation*}
whose colimit is by definition $(Y_1 \sma Y_2)\sma Y_3$.
On the other hand, computing the colimit along the rows produces this diagram, a copy of the top row in diagram \eqref{assoc1}.
\begin{equation*}
  \xy \UCMT \xymatrix{
    &  \bigl((Y_1 {\times} Y_2) {\times} X \bigr) {\cup} \bigl((Y_1 {\times} X){\times} Y_3\bigr) {\cup} \bigl((X {\times} Y_2){\times} Y_3  \bigr)
             \ar[dl]|{\mu( \mu(r_1, r_2), \id) \cup \mu(\mu( r_1, \id ), r_3) \cup \mu(\mu(\id, r_2), r_3)} \ar@{ >->}[r]
                    &   (Y_1 \times Y_2) \times Y_3
\\
X    &   &
}
\endxy
\end{equation*}
Therefore, the colimit, or pushout, of this diagram is another representation of $(Y_1\sma Y_2) \sma Y_3$, and we record the completed diagram
\begin{equation} \label{assoc1a}
   \xy \UCMT \xymatrix{
      \bigl((Y_1 {\times} Y_2) {\times} X \bigr) {\cup} \bigl((Y_1 {\times} X){\times} Y_3\bigr) {\cup} \bigl((X {\times} Y_2){\times} Y_3  \bigr)
             \ar[d]|{\mu( \mu(r_1, r_2), \id) \cup \mu(\mu( r_1, \id ), r_3) \cup \mu(\mu(\id, r_2), r_3)} \ar@{ >->}[r]
                    &   (Y_1 \times Y_2) \times Y_3  \ar[d]
\\
X   \ar@{ >->}[r] &   (Y_1\sma Y_2)\sma Y_3
}
\endxy
\end{equation}
as a preferred alternative representation of $(Y_1 \sma Y_2) \sma Y_3)$. 
Starting from a diagram similar to \eqref{assoc1}, but with parentheses shifted to the right, there is another completed pushout diagram
\begin{equation} \label{assoc2a}
   \xy \UCMT \xymatrix{
      \bigl(Y_1 {\times} (Y_2 {\times} X) \bigr) {\cup} \bigl(Y_1 {\times} (X{\times} Y_3)\bigr) {\cup} \bigl(X {\times} (Y_2{\times} Y_3) \bigr)
             \ar[d]|{\mu(r_1, \mu(r_2, \id)) \cup \mu( r_1, \mu(\id , r_3)) \cup \mu(\id,\mu( r_2, r_3))} \ar@{ >->}[r]
                    &   Y_1 \times (Y_2 \times Y_3)  \ar[d]
\\
X   \ar@{ >->}[r] &   Y_1\sma (Y_2\sma Y_3)
}
\endxy
\end{equation}
representing $Y_1 \sma (Y_2 \sma Y_3)$. 
Consequently, the associativity isomorphisms
\begin{equation*}
  \alpha_{Y_1, Y_2, Y_3} \colon  Y_1 {\times} (Y_2 {\times} Y_3) \ra  (Y_1 {\times} Y_2) {\times} Y_3, \; 
  \alpha_{Y_1, Y_2, X} \colon Y_1 {\times} ( Y_2 {\times} X) \ra (Y_1 {\times}Y_2 ) \times X,
\end{equation*}
 and so on, induce an isomorphism of diagram \eqref{assoc2a} with diagram \eqref{assoc1a} and an associativity isomorphism 
 \begin{equation}
   \label{assocsma}
   \alpha_{Y_1, Y_2, Y_3} \colon  Y_1 {\sma} (Y_2 {\sma} Y_3) \ra  (Y_1 {\sma} Y_2) {\sma} Y_3.
 \end{equation}

In Laplaza's framework \cite{LaPlazaCoh1}  left distributivity of the product over the sum operation is
encoded by  a monomorphism
$\delta_{Y_0, Y_1 , Y_2} \colon Y_0 \sma (Y_1 \wdX Y_2) \ra (Y_0{\sma}Y_1) \wdX (Y_0{\sma}Y_2)$.
That $\wdX$ is a categorical sum enables us to construct an isomorphism 
$\delta^{-1}_{Y_0, Y_1, Y_2}  \colon (Y_0{\sma}Y_1) \wdX (Y_0{\sma}Y_2) \ra Y_0 \sma (Y_1 \wdX Y_2)$ 
quite easily, as follows.
Applying the functor $Y_0{\sma}-$ to the sum diagram $Y_1 \ra Y_1 \wdX Y_2 \la Y_2$ provides
a diagram $Y_0{\sma}Y_1 \ra Y_0{\sma}(Y_1 \wdX Y_2) \la Y_0{\sma}Y_2$.  Since $\wdX$ is a categorical
sum, there results a map $(Y_0{\sma}Y_1) \wdX (Y_0{\sma}Y_2) \ra Y_0 \sma (Y_1 \wdX Y_2)$.
To check that this map is an isomorphism observe that in a simplicial dimension $p$ the 
$p$-simplices outside of $X$ in the domain are $(Y_0{-}X)_p{\times}(Y_1{-}X)_p \coprod (Y_0{-}X)_p{\times}(Y_2{-}X)_p$,
 the $p$-simplices outside of $X$ in the target are $(Y_0{-}X)_p{\times} \bigl( (Y_1{-}X)_p \coprod (Y_2{-}X)_p\bigr)$,
and the induced map is a one-to-one correspondence.  Thus, we obtain the isomorphism
$ \delta^{-1}_{Y_0, Y_1, Y_2} \colon (Y_0{\sma}Y_1) \wdX (Y_0{\sma}Y_2) \ra Y_0 \sma (Y_1 \wdX Y_2),$ whose inverse
\begin{equation}
  \label{leftdist}
 \delta_{Y_0, Y_1 , Y_2} \colon Y_0 \sma (Y_1 \wdX Y_2) \iso (Y_0{\sma}Y_1) \wdX (Y_0{\sma}Y_2)
\end{equation}
can be shown to meet LaPlaza's conditions. 
Similarly, we obtain an isomorphism 
\begin{equation}
  \label{rightdist}
  \delta^{\#}_{Y_0, Y_1 , Y_2} \colon (Y_0 \wdX Y_1) \sma Y_2 \iso (Y_0{\sma}Y_2) \wdX (Y_1{\sma}Y_2)
\end{equation}
This concludes the catalog of basic inputs for LaPlaza's theorem.

Given the basic inputs, the next step is to establish the commutativity of certain diagrams, twenty-four in number. 
Because $\wdX$ is a categorical sum and $\sma$ is biexact, preserving sums, checking the commutativity of seventeen of the diagrams is routine.
The other seven diagrams involve the multiplicative or additive neutral objects or the multiplicative zero object and are straightforward to verify.
LaPlaza's main theorem applies and ``all diagrams that should commute do, in fact, commute.''  These remarks complete the proof of part one of 
theorem \ref{symmetricbimonoid}.
\end{proof}
% %
\section{Defining the operations} \label{Extension}
%Revised 14 June 2018
The ingredients for the operations take values in categories of retractive spaces on which groups
are acting.  We first establish language and notation following 
\cite[Definitions 5.1, 5.2, 5.3, and 5.4]{GSAthyops}
for the following definitions.
\begin{definition}
  A set $\cF$ of subgroups  of $\Sigma_n$ is called a family of subgroups if 
it contains at most one member from each conjugacy class of subgroups.
\end{definition}
\begin{definition}
  For a finite group $G$, a $G$-simplicial set $Y$ has orbit types in a family $\cF$ relative to 
another $G$-simplicial set $W$ if $Y$ may be obtained from $W$ by
direct limit and by formation of pushouts of diagrams of the form
\begin{equation} \label{basicorbittypeattachment}
  \xy \UCMT \xymatrix{
Y' &  \ar[l]  \partial \Delta^n{\times} (G/H)  \ar@{ >->}[r]  & \Delta^n{\times} \bigl( G/H \bigr), }
\endxy
\end{equation}
where $\Delta^n$ is the standard simplicial $n$-simplex, $\partial \Delta^n$ is the simplicial boundary, and
$H \in \cF$. 
\end{definition}
\begin{definition} \label{equivariant}
  For a $\Sigma_n$-simplicial set $W$ let $\cR(W, \Sigma_n, \cF)$ 
denote the category whose objects are the triples $(Y,r,s)$ 
where $Y$ is a $\Sigma_n$-simplicial set with orbit types in $\cF$
relative to a $\Sigma_n$-section $s \colon W \ra Y$.  The map
$r \colon Y \ra W$ is $\Sigma_n$-retraction of $Y$ to $W$,
that is, $r \com s = \id_W$. 
Morphisms are $\Sigma_n$-equivariant maps commuting with the retractions and sections. 
\end{definition}
\begin{definition} \label{equivwcofandwe}
  Let $\cR_f(W, \Sigma_n, \cF)$ denote the full subcategory of $\cR(W, \Sigma_n, \cF)$
whose objects are the triples
$(Y,r,s)$ such that $Y$ is built from $W$ by formation of finitely many pushouts
of the form of diagram \ref{basicorbittypeattachment}.
The category $\cR_f(W, \Sigma_n, \cF)$ is also equipped  with 
cofibrations and weak equivalences   A cofibration 
$\xy \UCMT \xymatrix@1{( W_1, r_1, s_1) \ar@{ >->}[r] & (W_2, r_2, s_2)} \endxy $
is an injective $\Sigma_n$-map and a weak equivalence  
$(Y_1, r_1, s_1) \ra (Y_2, r_2, s_2)$ is a morphism for which the geometric
realization of the underlying map $Y_1 \ra Y_2$ is a $\Sigma_n$-equivariant
homotopy equivalence.
\end{definition}
For $X$ a connected simplicial abelian group on which $\Sigma_n$ acts trivially, 
we will need the categories
$\cR_f(X, \Sigma_n, \{{\rm all}\})$
of retractive left $\Sigma_n$-spaces $\tilde{Y}$ over $X$ which are finite
relative to $X$.
In principle, we may also allow $X$ to be a connected commutative simplicial
monoid with unit element.
We will write $\Loops \abs{w S_{\bullet}\cR_f(X, \Sigma_n, \{{\rm all}\})} = A_{\Sigma_n, \{{\rm all}\}}(X)$.
The category of retractive left $\Sigma_n$-spaces on which $\Sigma_n$ acts with trivial isotropy
 outside of $X$ is then $\cR_f(X, \Sigma_n, \{e\})$.  In other words, the $\Sigma_n$-action on 
simplices outside of $X$ is free on those simplices. 
 Later, we will abbreviate   $\cR_f(X, \Sigma_n, \{e\})  = \cR_f(X, \Sigma_n)$.
In lemma \ref{interpretations} we will justify the notation
$\Loops \abs{w S_{\bullet}\cR_f(X, \Sigma_n, \{e\})} = A(X{\times}B\Sigma_n)$.

There are two constructions underlying our approach to the Segal operations.   First is a family of  bi-exact functors 
\begin{equation*}
  \btn_{k,\ell} \colon \cR_f(X, \Sigma_k, \{\text{all}\}) \times \cR_f(X, \Sigma_{\ell}, \{\text{all}\})
\lra
   \cR_f(X, \Sigma_{k+\ell}, \{\text{all}\})
\end{equation*}
defined for $k$, $\ell \geq 0$, called  box-tensor operations (Definition \ref{boxtensor}). 
Second is a family of functors
\begin{equation*}
  \dop_{n,k} \colon \cR_f(X, \Sigma_n,  \{{\rm all}\} )^{[k]} \lra \cR_f(X, \Sigma_{kn}, \{\text{all}\}),
\end{equation*}
called  diamond operations (Definition \ref{basicdiamond}). 
Here $\cR_f(X, \Sigma_n, \{{\rm all}\} )^{[k]}$ is the category of filtered objects
\begin{equation*}
  \xy \UCMT \xymatrix{
Y_1 \ar@{ >->}[r] & Y_2 \ar@{ >->}[r] & \cdots \ar@{ >->}[r] & Y_k 
}
\endxy
\end{equation*}
with $Y_i$ in $\cR_f(X, \Sigma_n, \{{\rm all}\} )$ and natural transformations of such sequences.  

First we set up the box-tensor operation.  For a connected simplicial abelian group $X$, 
let $n=k{+}\ell$ and define an induction functor
\begin{equation} \label{smallinduction1}
  \Ind_{\Sigma_k{\times}\Sigma_{\ell}}^{\Sigma_n} \colon 
            \cR_f(X, \Sigma_k{\times}\Sigma_{\ell}, \{{\rm all}\}) \ra \cR_f(X, \Sigma_n , \{{\rm all}\}).
\end{equation}
Let $\fset{n}$ be a finite set of cardinality $n$ (for example, the standard example), 
let $\fset{k}{\cup}\fset{l}$ be the disjoint union of finite sets of cardinality 
$k$ and $l$, respectively, and let $\Iso(\fset{n},\fset{k}{\cup}\fset{l})$ be the set
of isomorphisms from $\fset{n}$ to the disjoint union. 
Let  $\Iso(\fset{n},\fset{k}{\cup}\fset{l})_+ = \Iso(\fset{n},\fset{k}{\cup}\fset{l}){\cup}\{*\}$
be viewed as an object of $\cR_f(*)$, with the obvious section and with the retraction the constant
map to $\{*\}$. The group $\Sigma_n$ acts from the left on $\Iso(\fset{n},\fset{k}{\cup}\fset{l})_+$ by fixing the basepoint
and by the rule $\sigma \cdot f = f \com \sigma^{-1}$ for  $\sigma \in \Sigma_n$ and $f \colon \fset{n} \ra \fset{k}{\cup}\fset{l}$. 
Normally $\Sigma_k{\times}\Sigma_{\ell}$ also acts from the left by post-composition, but we find it  convenient to 
use the right action defined by $f\cdot(\sigma_1, \sigma_2) = (\sigma_1^{-1}, \sigma_2^{-1})\com f$.
For $(Y, r, s) \in \cR_f(X, \Sigma_k{\times}\Sigma_{\ell}, \{ {\rm all} \})$ we unwind the defining pushout square 
\begin{equation}  \label{presmallinduction}
  \xy \UCMT \xymatrix{
(\Iso(\fset{n}, \fset{k}{\cup}\fset{l})_+{\times}X)\cup_{{*}\times X} ({*}\times Y) \ar@{ >->}[r] \ar[d]
         &     \Iso(\fset{n},\fset{k}{\cup}\fset{l})_+ \times Y \ar[d]
\\
 {*} \times X     \ar@{ >->}[r]   &   \Iso(\fset{n}, \fset{k}{\cup}\fset{l})_+ \esm Y}
\endxy
\end{equation}
to find  that the exterior smash product 
$\Iso(\fset{n}, \fset{k}{\cup}\fset{l})_+ \esm Y$
amounts to 
$\Iso(\fset{n}, \fset{k}{\cup}\fset{l})$-copies of $Y$, pasted together along the common subspace $X$.  
The retraction 
$r' \colon \Iso(\fset{n}, \fset{k}{\cup}\fset{l})_+ \esm Y \ra X$
given by $r'([f , y]) = r(y)$ is $\Sigma_n$-equivariant when $\Sigma_n$ acts trivially on $X$.   
We may also apply the principle of proposition \ref{internalizing} to re-express the exterior smash product
as an internal smash product and write
\begin{equation*}
  \Iso(\fset{n}, \fset{k}{\cup}\fset{l})_+ \esm Y \iso (X \wed \Iso(\fset{n}, \fset{k}{\cup}\fset{l})_+) \sma Y.
\end{equation*}

Define $\Iso(\fset{n}, \fset{k}{\cup}\fset{l})_+ \esm^{\Sigma_k{\times}\Sigma_{\ell}} Y$ to be the quotient space
of $\Iso(\fset{n}, \fset{k}{\cup}\fset{l})_+ \esm Y$ obtained by imposing the equivalence relation generated by
$\bigl[f\cdot (\sigma_1, \sigma_2) , y \bigr] \sim \bigl[f, (\sigma_1, \sigma_2)\cdot y \bigr]$. 
The left action of $\Sigma_n$ passes to the quotient, and, since the action of $\Sigma_n$ on $X$ is trivial, 
the retraction  $ \Iso(\fset{n}, \fset{k}{\cup}\fset{l})_+ \esm Y \ra X$
is given by $r'([f , y]) = r(y)$ also passes to the quotient, as does the section.  Thus, we obtain the necessary structure maps
\begin{equation}  \label{smallinduction2}
  \xy \UCMT \xymatrix{
X     \ar@{ >->}[r]   &   \Iso(\fset{n}, \fset{k}{\cup}\fset{l})_+ \esm^{\Sigma_k{\times}\Sigma_{\ell}} Y \ar[r]^(0.75)r & X}
\endxy
\end{equation}
This completes the definition of the induction functor
$  \Ind_{\Sigma_k{\times}\Sigma_{\ell}}^{\Sigma_n} \colon 
            \cR_f(X, \Sigma_k{\times}\Sigma_{\ell}, \{{\rm all}\}) \ra \cR_f(X, \Sigma_n , \{{\rm all}\})$.

Next we need an elementary pairing functor. 
\begin{equation} \label{pairings1}
  \cR_f(X, \Sigma_k,  \{{\rm all}\}) \times \cR_f(X, \Sigma_{\ell}, \{{\rm all}\}) \ra \cR_f(X, \Sigma_k \times \Sigma_{\ell}, \{{\rm all}\})
\end{equation}
The pairing sends
$\bigl((Y_1, r_1, s_1) ,   (Y_2, r_2, s_2)\bigr)$ to $ (Y_1, r_1,s_1) \sma (Y_2, r_2, s_2)$.
\begin{definition}  \label{boxtensor}
  Define the box-tensor operations following the pairing functor \eqref{pairings1} 
with the induction functor \eqref{smallinduction2}.
  \begin{multline}
    \label{eq:boxten}
    \btn_{k,\ell} \colon  \cR_f(X, \Sigma_k, \{{\rm all}\}) \times \cR_f(X, \Sigma_{\ell}, \{{\rm all}\}) 
\\
           \stackrel{\sma}{\ra} \cR_f(X, \Sigma_k \times \Sigma_{\ell}, \{{\rm all}\})
                  \stackrel{\Ind_{\Sigma_k\times \Sigma_{\ell}}^{\Sigma_n}}{\lra} 
                           \cR_f(X, \Sigma_n, \{{\rm all}\})
  \end{multline}
\end{definition}
\begin{proposition} \label{boxtensorassociative}
  The box-tensor operations are associative up to natural isomorphism.
\end{proposition}
\begin{proof}
  The associativity of the box-tensor operations is a consequence of the symmetric monoidal
structure on $\cR_f(X)$ associated with $\sma$, along with properties of the cartesian product of groups and disjoint union of sets. 
Abbreviating $\id_{\cR_f(X, \Sigma_{k_1}, \{{\rm all}\})}$, $\id_{\cR_f(X, \Sigma_{k_3}, \{{\rm all}\})}$ by $\id_1$, $\id_3$, respectively, 
 the assertion in detail is that the diagram
\begin{equation*}
\xy  \UCMT \xymatrix@C-13em{
& \cR_f(X, \Sigma_{k_1}, \{{\rm all}\}) \times  \cR_f(X, \Sigma_{k_2}, \{{\rm all}\}) \times  \cR_f(X, \Sigma_{k_3}, \{{\rm all}\}) 
 \ar[dl]_(0.6){\btn_{k_1,k_2} \times \id_3}  \ar[dr]^(0.6){\id_1 \times \btn_{k_2,k_3}} &
\\
 \cR_f(X, \Sigma_{k_1+k_2}, \{{\rm all}\}) \times  \cR_f(X, \Sigma_{k_3}, \{{\rm all}\}) \ar[dr]_(0.4){\btn_{k_1+k_2, k_3}}
         &   &
           \cR_f(X, \Sigma_{k_1}, \{{\rm all}\}) \times  \cR_f(X, \Sigma_{k_2+ k_3}, \{{\rm all}\}) \ar[dl]^(0.4){\btn_{k_1, k_2+k_3}}
\\
  &   \cR_f(X, \Sigma_{k_1+k_2+k_3}, \{{\rm all}\})   &
}
\endxy
\end{equation*}
commutes up to canonical isomorphism.
Given a triple  $(Y_1, Y_2, Y_3)$ in the category at top of the diagram, the value of the lefthand sequence of arrows is
\begin{equation*}
  \Iso(\fset{k_1{+}k_2{+}k_3}, \fset{k_1{+}k_2}\cup\fset{k_3})_+ \esm^{\Sigma_{k_1+k_2}\times \Sigma_{k_3}}
  \biggl(  \Bigl( \Iso(\fset{k_1+k_2},\fset{k_1} \cup \fset{k_2})_+ \esm^{\Sigma_{k_1}\times \Sigma_{k_2}} Y_1\sma Y_2 \Bigr) \sma Y_3\biggr),
\end{equation*}
and we claim this space is isomorphic to 
\begin{equation}  \label{interpretleft}
   \Iso(\fset{k_1{+}k_2{+}k_3},(\fset{k_1}\cup\fset{k_2})\cup\fset{k_3})_+ \esm^{(\Sigma_1 \times \Sigma_2) \times \Sigma_3} (Y_1 \sma Y_2) \sma Y_3.
\end{equation}
To clarify the notations, $\fset{k_1{+}k_2{+}k_3}$ denotes the standard finite set of cardinality $k_1{+}k_2{+}k_3$,
$\fset{k_1{+}k_2}{\cup}\fset{k_3})$ denotes the disjoint union of finite sets of cardinality $k_1{+}k_2$ and $k_3$, and so on. 
Similarly, the value of the righthand sequence of arrows is
\begin{equation*}
    \Iso(\fset{k_1{+}k_2{+}k_3}, \fset{k_1}\cup\fset{k_2{+}k_3})_+ \esm^{\Sigma_{k_1}\times \Sigma_{k_2+k_3}}
  \biggl( Y_1\sma \Bigl( \Iso(\fset{k_2+k_3},\fset{k_2} \cup \fset{k_3})_+ \esm^{\Sigma_{k_2}\times \Sigma_{k_3}}  Y_2 \sma Y_3 \Bigr) \biggr),
\end{equation*}
which we claim is isomorphic to 
\begin{equation}  \label{interpretright}
   \Iso(\fset{k_1{+}k_2{+}k_3},\fset{k_1}\cup(\fset{k_2}\cup\fset{k_3}))_+ \esm^{\Sigma_1 \times (\Sigma_2 \times \Sigma_3)} Y_1 \sma (Y_2 \sma Y_3).
\end{equation}
The spaces in \eqref{interpretleft} and \eqref{interpretright} are isomorphic by combining the
associativity isomorphisms for disjoint union, cartesian products of groups, and the smash product $\sma$.   Thus, we have proved
that the box-tensor operations are naturally associative, granting the two isomorphisms.

To see one of these isomorphisms requires several steps.  We concentrate on the first case, since the second is completely parallel.
First, since $\Iso( \fset{k_3}, \fset{k_3}) = \Sigma_{k_3}$, there is an isomorphism
\begin{equation}   \label{contractioniso}
  \Iso(\fset{k_3}, \fset{k_3})_+ \esm^{\Sigma_{k_3}} Y_3 \stackrel{\iso}{\lra} Y_3
\end{equation}
in $\cR_f(X, \Sigma_3)$ induced by the formula $[f_3 , y] \mapsto f_3^{-1}y$.  
With the right action of $\Sigma_{k_3}$ on $\Iso(\fset{k_3}, \fset{k_3})$ given by $f\cdot\sigma = \sigma^{-1}\com f$,
we have $[f_3\cdot \sigma, y] \mapsto (\sigma^{-1}f_3)^{-1}y = f_3^{-1}\sigma y$; 
starting from $[f_3, \sigma y]$, we also arrive at $f_3^{-1}\sigma y$. 
Thus, a map $ \Iso(\fset{k_3}, \fset{k_3})_+ \esm^{\Sigma_{k_3}} Y_3 \ra Y_3$ exists.  Surjectivity is clear. 
For injectivity, if $[f_3, y]$ and $[f'_3, y']$ map to the same element of $Y$, we have $f_3^{-1}y = (f'_3)^{-1}y'$.  Putting
$\sigma = f_3'f_3^{-1}$, we have $y'=\sigma y$ and  $f'_3 \cdot \sigma = \sigma^{-1}f'_3 = f_3(f'_3)^{-1}f'_3 = f_3$, so
so $[f_3 , y ] = [f'_3 \cdot \sigma, y] \sim [f'_3 , \sigma y] = [f'_3, y'] $.  For equivariance, recall that the left action
of $\Sigma_{k_3}$ on $\Iso(k_3, k_3)$ is given by $\sigma \cdot f_3 =   f \com \sigma^{-1} $.  Thus, 
\begin{equation*}
 [ \sigma * f, y] = [ f \com \sigma^{-1}, y] \mapsto ( f \com \sigma^{-1})^{-1} y = \sigma( f^{-1} y)
\end{equation*}
shows equivariance.

Consequently,
\begin{equation*}
  \Iso(\fset{k_1{+}k_2{+}k_3}, \fset{k_1{+}k_2}\cup\fset{k_3})_+ \esm^{\Sigma_{k_1+k_2}\times \Sigma_{k_3}}
  \biggl(  \Bigl( \Iso(\fset{k_1+k_2},\fset{k_1} \cup \fset{k_2})_+ \esm^{\Sigma_{k_1}\times \Sigma_{k_2}} Y_1\sma Y_2 \Bigr) \sma Y_3\biggr)
\end{equation*}
is isomorphic to 
\begin{multline*}
  \Iso(\fset{k_1{+}k_2{+}k_3}, \fset{k_1{+}k_2}\cup\fset{k_3})_+ \esm^{\Sigma_{k_1+k_2}\times \Sigma_{k_3}} \\
  \biggl(  \Bigl( \Iso(\fset{k_1+k_2},\fset{k_1} \cup \fset{k_2})_+ \esm^{\Sigma_{k_1}\times \Sigma_{k_2}} Y_1\sma Y_2 \Bigr) 
     \sma \Bigl( \Iso(\fset{k_3}, \fset{k_3})_+ \esm^{\Sigma_{k_3}} Y_3 \Bigr) \biggr).
\end{multline*}
Applying a commutativity isomorphism of the product $\esm$, this object is isomorphic to 
\begin{multline*}
    \biggl( \Iso(\fset{k_1{+}k_2{+}k_3}, \fset{k_1{+}k_2}\cup\fset{k_3})_+ \esm^{\Sigma_{k_1+k_2}\times \Sigma_{k_3}}  \\
     \Bigl( \Iso(\fset{k_1+k_2},\fset{k_1} \cup \fset{k_2})_+ \esm  \Iso(\fset{k_3}, \fset{k_3})_+ \Bigr) \biggr) 
        \esm^{(\Sigma_{k_1}\times \Sigma_{k_2}) \times \Sigma_{k_3}}
               (Y_1 \sma Y_2) \sma Y_3.
\end{multline*}
Now we claim there is an isomorphism
\begin{multline*}
  \biggl( \Iso(\fset{k_1{+}k_2{+}k_3}, \fset{k_1{+}k_2}\cup\fset{k_3})_+ \esm^{\Sigma_{k_1+k_2}\times \Sigma_{k_3}}
     \Bigl( \Iso(\fset{k_1+k_2},\fset{k_1} \cup \fset{k_2})_+ \esm  \Iso(\fset{k_3}, \fset{k_3})_+ \Bigr) \biggr) 
\\
 \iso
   \Iso\bigl(\fset{k_1{+}k_2{+}k_3},(\fset{k_1}\cup\fset{k_2})\cup\fset{k_3}\bigr)_+
\end{multline*}
induced by the formula 
$[f_{123}, [ f_{12} , f_3] ] \mapsto (f_{12}, f_3) \com f_{123}$.  We check that balanced expressions in 
\begin{equation*}
  \biggl( \Iso(\fset{k_1{+}k_2{+}k_3}, \fset{k_1{+}k_2}\cup\fset{k_3})_+ \esm
     \Bigl( \Iso(\fset{k_1+k_2},\fset{k_1} \cup \fset{k_2})_+ \esm  \Iso(\fset{k_3}, \fset{k_3})_+ \Bigr) \biggr)
\end{equation*}
map to the same element of the target.  
\begin{equation*}
[f_{123}\cdot (g_{12},g_3), [f_{12}, f_3]]= [(g_{12}^{-1}, g_3^{-1})\com f_{123}, [f_{12}, f_3]] 
                              \mapsto (f_{12}, f_3)\com \bigl( (g_{12}^{-1}, g_3^{-1})\com f_{123} \bigr)
\end{equation*}
On the other hand, 
\begin{equation*}
  [f_{123},( g_{12},g_3)\cdot [f_{12}, f_3]]= [f_{123},  [ f_{12} \com g_{12}^{-1} , f_3 \com g_3^{-1}]]
                              \mapsto (f_{12} \com g_{12}^{-1} , f_3 \com g_3^{-1}) \com f_{123}
\end{equation*}
and these expressions are the same, by associativity of composition of functions.  Now suppose that
$[f_{123}, [ f_{12} , f_3] ]$ and $[f'_{123}, [ f'_{12} , f'_3] ]$ map to the same isomorphism.
The equation $(f_{12}, f_3) \com f_{123} = (f'_{12}, f'_3) \com f'_{123}$ is equivalent to 
$(f'_{12}, f'_3)^{-1} \com (f_{12}, f_3)  =  f'_{123} \com  f_{123}^{-1}$.
 Putting $(\sigma_{12}, \sigma_3) = (f'_{12}, f'_3)^{-1} \com (f_{12}, f_3)  = f'_{123} \com  f_{123}^{-1}$, we have
\begin{align*}
    f'_{123} \cdot (\sigma_{12}, \sigma_3) &= (f'_{123} \com f_{123}^{-1})^{-1} \com f'_{123} = f_{123},
\intertext{and} 
  (\sigma_{12}, \sigma_3) \cdot (f_{12}, f_3) &= (f_{12}, f_3) \com \bigl( (f'_{12}, f'_3)^{-1} \com (f_{12}, f_3) \bigr)^{-1}
= (f'_{12}, f'_3),
\end{align*}
so that 
\begin{equation*}
  [f_{123}, [f_{12}, f_3]] = [f'_{123}\cdot (\sigma_{12}, \sigma_3), [f_{12},f_3]] 
\sim
[f'_{123}, (\sigma_{12}, \sigma_3)\cdot [f_{12}, f_3]] = [f'_{123}, [f'_{12}, f'_3]].  \qedhere
\end{equation*}
\end{proof}
The diamond operation $\dop_{k,1} = \dop_k$ requires some preliminary definitions.  First recall the category
of filtered objects $F_k\cR_f(X)$ from \cite[section 1.1]{Waldhausen85}; this is a category with cofibrations
and weak equivalences.  
Let 
\begin{equation*}
\xy \UCMT 
\xymatrix@1{ \underline{P} = (P_1 \ar@{ >->}[r] & P_2 \ar@{ >->}[r] &\cdots \ar@{ >->}[r]&  P_k) } 
\endxy
\end{equation*}
be an object of $F_k\cR_f(X)$.
For functions $f, g \colon  \fset{k} \ra \fset{k}$ we say 
$f \leq g$ if $f(i) \leq g(i)$ for all $i \in \fset{k}$.
 Let 
$I(\fset{k}) = \{f \colon \fset{k} \ra \fset{k}\; | \; \text{There is $\sigma \in \Sigma_k$ such
that $f \leq \sigma$.} \}$.

The set $I(\fset{k})$ is partially ordered by $\leq$, and the sequence 
$\underline{P}$ defines a functor 
$\underline{\underline{P}} \colon I(\fset{k}) \ra \cR_f(X)$ by the rule
$\underline{\underline{P}}(f) = P_{f(1)} \sma P_{f(2)} \sma \cdots \sma P_{f(k)}$ on objects. 
We apply the convention that parentheses in iterated products are collected to the left.  
In particular, $P_{f(1)} \sma P_{f(2)} \sma P_{f(3)} = (P_{f(1)} \sma P_{f(2)}) \sma P_{f(3)}$, and, in general, 
\begin{equation*}
   P_{f(1)} \sma P_{f(2)} \sma \cdots \sma P_{f(k)} = \bigl(\cdots ( P_{f(1)} \sma P_{f(2)}) \sma \cdots \sma P_{f(k)}\bigr).
\end{equation*}
For arrows we  observe that $f \leq g$ implies there are cofibrations
$\xy \UCMT \xymatrix@1{P_{f(i)} \ar@{ >->}[r]& P_{g(i)}} \endxy$
which induce a cofibration
$\xy \UCMT 
\xymatrix@1{ \underline{\underline{P}}(f) \ar@{ >->}[r]&  \underline{\underline{P}}(g)} 
\endxy$.
This depends on the exactness of $\sma$,  proved in theorem \ref{symmetricbimonoid}. 
\begin{definition}  \label{basicdiamond}
Define the functor $  \dop_k \colon F_k\cR_f(X) \ra \cR_f(X, \Sigma_k, \{{\rm all}\})$ on objects by making a choice of
$\colim_{I(\fset{k})} \underline{\underline{P}}$ and setting
\begin{equation*}
  \dop_k( \underline{P} ) = \colim_{I(\fset{k})} \underline{\underline{P}}.
\end{equation*}
The definition extends to arrows by the universal property of the colimit.  The $\Sigma_k$ action is 
induced by the permutation of factors. 
\end{definition}
\begin{example}
  \label{keyex1diamond}
Applied to a constant cofibration sequence 
$\underline{Y} = (\xy \UCMT \xymatrix{Y \ar@{ >->}[r]^= & Y \ar@{ >->}[r]^= & \cdots \ar@{ >->}[r]^= & Y } \endxy) $
of length $k$, we obtain simply
\begin{equation*}
  \dop_k (\underline{Y}) = Y \sma Y \sma \cdots \sma Y
\end{equation*}
with the group $\Sigma_k$ permuting the factors.   Thus, the purpose of $\dop_k$ is to extend $\sma$-powers to filtered objects.
\end{example}
\begin{definition}  \label{gendiamond}
  The generalized diamond operation 
  \begin{equation*}
\dop_{n,k} \colon F_k \cR_f(X, \Sigma_n, \{{\rm all}\}) \ra \cR_f(X , \Sigma_{nk}, \{{\rm all}\})
  \end{equation*}
is a composition
\begin{equation*}
  \dop_{n,k} \colon 
     F_k \cR_f(X, \Sigma_n, \{{\rm all}\}) \stackrel{\dop_k}{\lra} \cR_f(X , B_{n,k}, \{{\rm all}\}) 
                \stackrel{\Ind_{B_{n,k}}^{\Sigma_{nk}}}{\lra} \cR_f(X , \Sigma_{nk}, \{{\rm all}\}),
\end{equation*}
 with a basic diamond operation $\dop_k$ followed by  an induction construction $\Ind_{B_{n,k}}^{\Sigma_{nk}}$.
The intermediate group $B_{n,k}$ is the group of block permutations of $nk$ objects blocked into $k$ groups of $n$ objects. 
Thus, the group $B_{n,k}$ is a wreath product: $B_{n,k}=\Sigma_k \wr \Sigma_n$.
Explicitly, there is a short exact sequence of groups $1 \ra (\Sigma_n)^k  \ra B_{n,k} \ra \Sigma_k \ra 1$.
 \end{definition}

Recall $G_{\bullet}$ briefly here, following \cite{GSVWKthy}.
For a simplicial set  $Z$ the corresponding simplicial path set $PZ$ is defined by 
$PZ_n = Z_{n+1}$.  The face operator $d_i \colon PZ_n \ra PZ_{n-1}$ coincides with 
$d_{i+1} \colon Z_{n+1} \ra Z_n$; the degeneracy operator $s_i \colon PZ_n \ra PZ_{n+1}$ coincides 
with $s_{i+1} \colon Z_{n+1} \ra Z_{n+2}$.  The face operator $d_0 \colon Z_{n+1} \ra Z_n$ induces 
a simplicial map $d_0 \colon PZ \ra Z$.  The simplicial set $PZ$ is simplicially homotopy 
equivalent to the constant simplicial set $Z_0$ \cite[Lemma 1.5.1, p.328]{Waldhausen85}.
Viewing $Z_1 = PZ_0 $ as another constant simplicial set provides a simplicial map $Z_1 \ra PZ$.
\begin{definition}\cite[p.257]{GSVWKthy}\label{Gdotdef}
For a category $\cC$ with cofibrations and weak equivalences the simplicial category $G_{\bullet}\cC$  is defined by the cartesian square
\begin{equation}  \label{Gdotdiagram}
\xy \UCMT \xymatrix{
  wG_{\bullet}\cC  \ar[r] \ar[d]  &   P w S_{\bullet} \cC  \ar[d]_{d_0}
\\
  P w S_{\bullet} \cC    \ar[r]^{d_0}   &   wS_{\bullet}\cC
}  
\endxy
\end{equation}
\end{definition}
Recalling a few more details from \cite{GSVWKthy}, 
$G_{\bullet} \cC$ has cofibrations and weak equivalences.  
Since  $G_n\cC = (PS_{\bullet}\cC)_n \times_{S_n\cC} (PS_{\bullet} \cC)_n = S_{n+1}\cC \times_{S_n\cC} S_{n+1}\cC$, 
the weak equivalences and cofibrations in $w G_{\bullet} \cC$ are given by pullback.
There is also a stabilization map $\eta \colon \cC \ra G_{\bullet}\cC$, where $\cC$ is viewed as a constant simplicial category with cofibrations
and weak equivalences, defined as follows.
We have the map $\cC  = (PwS_{\bullet}\cC)_0 \ra PwS_{\bullet}\cC$ and the constant map  $\cC \ra PwS_{\bullet}\cC$ carrying 
$\cC$ to the terminal object.  These two combine
to give an inclusion $\eta \colon \cC \ra G_{\bullet}\cC$.  
After passing to diagonals, the construction may be iterated so there results a stabilization sequence 
\begin{equation*}
  \cC \ra  G_{\bullet}\cC  \ra G_{\bullet} \bigl( G_{\bullet}\cC \bigr) 
             \ra \cdots \ra G_{\bullet}^n\cC \mathrel{\mathop :}=G\bigl(G_{\bullet}^{n-1}\cC \bigr) \ra \cdots 
                   \ra \colim_n G^n_{\bullet}\cC  \mathrel{\mathop :}=  G_{\bullet}^{\infty}\cC
\end{equation*}
of simplicial categories with cofibrations and weak equivalences.  
Returning to the square \eqref{Gdotdiagram}, after passage to nerves in the $w$-direction, diagonalization,  and geometric realization, 
there results a natural map
\begin{equation*}
  \abs{wG_{\bullet} \cC} \ra \Loops \abs{wS_{\bullet}\cC}.
\end{equation*}
This may not always be a homotopy equivalence, but it is a homotopy equivalence when 
$\cC$ has a property called pseudo-additivity
\cite[Definition 2.3 and Theorem 2.6]{GSVWKthy}. 
In our case, with $\cC = \cR_f(X)$ we follow 
\cite{GSVWKthy} to achieve the pseudo-additivity property by 
using the cylinder functor defined in
\cite[Section 1.6]{Waldhausen85}.  The cylinder functor 
induces a cone functor $c \colon \cR_f(X) \ra \cR_f(X)$ 
and a suspension functor $\Sigma \colon \cR_f(X) \ra \cR_f(X)$ so that we may define
a category of prespectra 
\begin{equation*}
  \Sigma^{\infty} \cR_f(X) = \colim \bigl( \cR_f(X) \stackrel{\Sigma}{\ra}   \cR_f(X) \stackrel{\Sigma}{\ra}   
                \cR_f(X) \stackrel{\Sigma}{\ra} \cdots \bigr).
\end{equation*}
Then $\Sigma^{\infty}\cR_f(X)$ has the pseudo-additivity property \cite[Remark 2.4 and Lemma 2.5, p.258--259]{GSVWKthy} so 
\begin{equation*}
   \abs{wG_{\bullet} \Sigma^{\infty}\cR_f(X)} \ra \Loops \abs{wS_{\bullet}  \Sigma^{\infty}\cR_f(X)}
\end{equation*}
is a weak homotopy equivalence.  Additionally, by 
\cite[Proposition 1.6.2]{Waldhausen85}, 
$\abs{wS_{\bullet}\cR_f(X)} \ra \abs{wS_{\bullet}\Sigma^{\infty}\cR_f(X)} $
is a weak homotopy equivalence. 

Additionally, we need the fact that there  are weak homotopy equivalences
\begin{equation}
  \label{eq:Ginfty}
  \abs{wG_{\bullet}^{\infty} \cC} \ra \Loops\abs{wG_{\bullet}^{\infty} S_{\bullet}\cC}  \la \Loops \abs{wS_{\bullet}\cC}
\end{equation}
 for any category $\cC$ with cofibrations
and weak equivalences 
\cite[Theorem 4.1, p.268]{GSVWKthy}.

The $G_{\bullet}\cC$ construction has an explicit description as a category of exact functors.
For full  details  refer to \cite{GSAthyops} and \cite{GraysonOps}.  
First, extend the partially ordered set $A \in \Delta$ to the set 
$\gamma(A) =\{L,R\} \coprod A$ 
with the ordering in which  $L$ and $R$ are not comparable; $L < a$ and $R < a$ for every $a \in A$; and, for $a, a' \in A$, 
$a < a'$ in $\gamma(A)$ if and only if $a < a'$ in $A$.   Pictorially, for $A = [n]$,  $\gamma(A)$ looks like
\begin{equation*}
  \xy \UCMT \xymatrix@R=1ex{
L \ar[dr] 
\\
 &   0  \ar[r] & 1 \ar[r] & \cdots \ar[r] & n
\\
R \ar[ur]
}
\endxy
\end{equation*}
The category $\Gamma(A)$ is the category of arrows in $\gamma(A)$, omitting the identity arrows on $L$ and $R$.  Diagrammatically,
$\Gamma(A)$ looks like
\begin{equation*}
  \xy \UCMT \xymatrix@R=2ex@C=1.5em{
  &  0/L \ar@{ >->}[rr] \ar'[d][dd] &  & 1/L \ar@{ >->}[rr]  \ar'[d][dd] & & \cdots \ar@{ >->}[rr] & & n/L  \ar[dd]
\\
  0/R \ar@{ >->}[rr]  \ar[dr] & & 1/R \ar@{ >->}[rr] \ar[dr] & &   \cdots \ar@{ >->}[rr] &&  n/R \ar[dr]
\\
 &   0/0 \ar@{ >->}[rr]  & & 1/0 \ar@{ >->}[rr] \ar[d] & &  \cdots \ar@{ >->}[rr] & &  n/0 \ar[d]
\\
       &  &  & 1/1 \ar@{ >->}[rr] & & \cdots \ar@{ >->}[rr] & & n/1 \ar[d]
\\   
& &  & & & & \cdots  & \stackrel{\cdot}{:} \ar[d] 
\\
& & & & & & & n/n.
}
\endxy
\end{equation*}
Here $a/b$ stands for $b \ra a$ (or $b < a$), and an arrow $a/b \ra c/d$ stands for a square
$\begin{smallmatrix} \xy \UCMT  \xymatrix@!0{ a \ar[r] & c \\ b \ar[u] \ar[r] & d \ar[u] } \endxy \end{smallmatrix} $
in $\gamma(A)$.   The exact sequences in $\Gamma(A)$ are sequences
$j/k \ra i/k \ra i/j$ where $k \ra j \ra i$ in $\gamma(A)$.  
 Then, for  $A \in \Delta$,
\begin{equation*}
  G_A \cC = \Exact(\Gamma(A), \cC). 
\end{equation*}
Since $\Gamma(A)$ is functorial in $A$, preserving exact sequences $j/k \ra i/k \ra i/j$, 
we have another description of $G_{\bullet} \cC \colon \Delta^{\rm op} \ra \Cat$. 
In this interpretation the stabilization $\eta \colon \cC \ra G_{\bullet}\cC$ sends an object
$C$ of $\cC$ to the functor $\eta(C) \colon \Gamma(A) \ra \cC$ whose value at $i/L$ is $C$ for all $i \in A$
and whose value at any other object of $\Gamma(A)$ is the zero object of $\cC$. Given an arrow $i/L \ra i'/L$
in $\Gamma(A)$, $\eta(C)$ assigns to it the identity on $C$; other arrows are assigned by the universal property of the zero object. 
\begin{definition}[Compare 2.1, p.268, \cite{GSAthyops} and Section 4, \cite{GraysonOps}] \label{subdivision}
 Let $Z$ be a simplicial object in a category $\cD$. Define a 
concatenation operation ${\rm con} \colon \Delta^k \ra \Delta$.  For a sequence $(A_1, \ldots, A_k)$ of finite non-empty
ordered sets, order their disjoint union $A_1 \coprod \cdots \coprod A_k$ so that the subset $A_i$ inherits
 the original order and so that, if $i \leq j$ and $a_i \in A_i$ and $a_j \in A_j$, then $a_i < a_j$. 
Then define the  $k$-fold edgewise subdivision of a simplicial object $Z$ to be the composite functor
\begin{equation*}
  \sub_k Z \colon \Delta^k \stackrel{{\rm con}}{\lra} \Delta \stackrel{Z}{\lra} \cD
\end{equation*}
For a simplicial set $Z$ there is a natural homeomorphism $\abs{\sub_k Z} \ra \abs{Z}$.
\end{definition}
Several more constructions are necessary before we can define for every integer $k \geq 1$ operations
\begin{equation*}
  \omega^k \colon w \sub_kG_{\bullet}\cR_f(X) \lra wG^k_{\bullet}\cR_f(X, \Sigma_k, \{{\rm all}\}).
\end{equation*}
The original framework has proved to be quite robust, so we refer to  
\cite[Sections 5 through~7]{GraysonOps} and \cite[Section 2]{GSAthyops} 
for complete details and summarize what we use.
\begin{theorem}[Sections 5 through 7, pp.253--257, of \cite{GraysonOps}]\label{Gammacategories}
For $A \in \Delta$, let $\Gamma^1(A)$ be the category $\Gamma(A)$ discussed before definition \ref{subdivision}.
\begin{enumerate}
\item For each $A \in \Delta$ and for each integer $k \geq 1$ there is a category with exact sequences
$\Gamma^k(A)$.  The category $\Gamma^k(A) $ is natural in the variable $A$.
\item For $A_1, \ldots, A_k \in \Delta$, let $A_1\ldots A_k$ be the concatenation.  There is a functor
  \begin{equation*}
    \Xi_k \colon \Gamma(A_1) \times \cdots \times \Gamma(A_k) \lra \Gamma^k(A_1 \ldots A_k).
  \end{equation*}
the functor $\Xi_k$ is multi-exact, i.e., exact in each variable separately, and is natural in
each of the variables.  \qed
\end{enumerate}  
\end{theorem}
Grayson \cite[pp.255--256]{GraysonOps} enumerates compatibility conditions 
(E1), (E2), (E3), (E4), and (E5)
abstracted from properties of higher exterior powers and tensor products when applied to 
filtered modules.  Given that the box-tensor operations $\btn$ and diamond operations $\dop_{n,k}$ fulfill 
(E1), (E2), (E3), (E4), and (E5)
the robustness of the framework enables us to make the following observation.
\begin{definition}  \label{lambdafunctors}
For $A \in \Delta$, the collection of operations $\dop_{n,k} $ and $\btn$ define functors
  \begin{equation*}
    \Lambda^k_{\dop, \btn} \colon \Exact(\Gamma(A) , \cR_f(X, \Sigma_n, \{ {\rm all} \}) )
           \lra  \Exact( \Gamma^k(A) , \cR_f(X, \Sigma_{n,k}, \{ {\rm all} \}) ).
  \end{equation*}
These functors are natural in $A$.
\end{definition}
\begin{remark}
Since we don't need the explicit formula for $\Lambda^k_{\dop, \btn}$ except in a few specific cases, we 
refer the reader to the discussion in \cite[p.256--257]{GraysonOps} for all the details.
For guidance, we point out that the categories $\Gamma^k(A)$ mentioned in theorem \ref{Gammacategories} 
are constructed precisely to deliver the definition of $\Lambda^k_{\dop, \btn}$ on an object. 
Properties (E1) through (E4) ensure that the formulas on arrows yield a well-defined functor.  
Property (E5) of Grayson's list ensures that the functors $\Lambda^k_{\dop, \btn}$ carry an exact functor $M$
to another exact functor.  
\end{remark}
In our situation we need the following property of a category with cofibrations. 
\begin{definition} (Compare Definition 4.3, \cite[p.274]{GSVWKthy}.) \label{extensiondefn}
  A category $\cC$ with cofibrations has the extension property if for all commutative diagrams of cofibration sequences
  \begin{equation*}
    \xy \UCMT \xymatrix{
A \ar@{ >->}[r] \ar@{ >->}[d]  &    B  \ar@{ >->}[r] \ar[d]_{i}  &    C \ar@{ >->}[d]
\\
A'\ar@{ >->}[r] &    B' \ar@{ >->}[r]   &    C'
}
\endxy
  \end{equation*}
 in $\cC$, with vertical cofibrations as indicated, it follows that the middle arrow $i$ is also a cofibration.
\end{definition}
\begin{lemma} \label{extproperty}
Let $\cC$ be a category with cofibrations,  $A_1, \ldots A_k \in \Delta$, and let $A_1 \ldots A_k$ be the concatenation.
  \begin{enumerate}
  \item If $\cC$ has the extension property, then the natural inclusion
    \begin{equation*}
      G^k_{A_1 \ldots A_k}\cC \lra \Exact(\Gamma(A_1){\times}\cdots{\times}\Gamma(A_k), \cC)
    \end{equation*}
is an isomorphism.
\item The categories $\cR_f(X, \Sigma_n, \cF)$ with cofibrations have the extension property.
  \end{enumerate}
  \end{lemma}
\begin{proof}
Part 1 is \cite[4.4 Remark, p.274]{GSAthyops}.
 For part 2, because we are working inside $\cR(X)$ with simplicial sets, cofibrations are the injective maps.  Therefore, the 
extension property holds for $\cR_f(X, \Sigma_n, \cF)$. 
\end{proof}
\begin{proposition}[Proposition 4.5 of \cite{GSAthyops}]
  \label{input1}
  The box-tensor operations and the
diamond operations fulfill properties (E1) through (E5).
\end{proposition}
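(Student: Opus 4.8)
The plan is to build everything from the coherent symmetric bimonoidal structure on $(\cR_f(X), \wdX, \sma)$ established in theorem \ref{symmetricbimonoid}, together with the combinatorics of the induction functors and the indexing posets $I(\fset{k})$. The three operations $\sma$, $\btn_{k,\ell}$, and $\dop_{n,k}$ are all assembled from $\sma$ by taking colimits: $\btn_{k,\ell}$ is $\sma$ followed by an induction $\Ind_{\Sigma_k\times\Sigma_\ell}^{\Sigma_n}$, and the latter is itself a colimit (an exterior smash with an iso-set, followed by a quotient by $\Sigma_k\times\Sigma_\ell$), while the diamond $\dop_k$ is the colimit $\colim_{I(\fset{k})}\underline{\underline{P}}$ of iterated $\sma$-products. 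Because $\sma$ is biexact and preserves pushouts of cofibrations and weak equivalences (proposition \ref{externalbiexactness} and theorem \ref{symmetricbimonoid}), it commutes with all the colimits in sight, so the whole verification reduces to identifying the relevant colimits and checking that the canonical comparison maps are the asserted ones.

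For associativity of $\btn$, I would first observe that both $(Y_1\btn Y_2)\btn Y_3$ and $Y_1\btn(Y_2\btn Y_3)$ are, up to canonical isomorphism, an induction to $\Sigma_{k+\ell+m}$ of the triple product $Y_1\sma Y_2\sma Y_3$. This follows from two facts: the associativity isomorphism \eqref{assocsma} for $\sma$, and transitivity of induction (``induction in stages''). The latter is a bijection of iso-sets, equivariant for the symmetric groups involved, under which composing an isomorphism $\fset{n}\to\fset{k}\cup\fset{\ell}$ with one refining $\fset{\ell}$ into $\fset{\ell_1}\cup\fset{\ell_2}$ corresponds to an isomorphism onto the triple disjoint union. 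One also needs the projection-formula identity that induction commutes with smashing against an external factor; since $\Ind$ is $\esm$ with an iso-set followed by a quotient and $\esm$ preserves the colimits involved, both iterated inductions agree.

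For properties (E1)--(E5), the maps all arise from order-preserving maps between the indexing posets $I(\fset{\bullet})$ and from the canonical map comparing an induction of a product with a larger product. Concretely, for (E1) there is a block inclusion $I(\fset{p})\times I(\fset{q})\hookrightarrow I(\fset{p+q})$ sending a pair $(f,g)$ to the function acting by $f$ on the first block and by a shifted $g$ on the second; the box-tensor realizes the $\Sigma_{p+q}$-orbit of the image, and the universal property of $\colim_{I(\fset{p+q})}$ supplies the asserted natural map. Properties (E2)--(E4) are handled the same way, now also using the quotient objects $X/W$ produced by the cofibration structure, the large diagrams in (E3) and (E4) commuting because each composite is the comparison map attached to a single composite poset map, so that uniqueness out of the colimit forces equality. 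For the exactness in (E5), I would use biexactness of $\sma$ together with the extension property of the categories $\cR_f(X,\Sigma_n,\cF)$ (lemma \ref{extproperty}): in each simplicial degree and away from $X$ the displayed sequence is the cofiber sequence attached to an injection of sets, and biexactness of $\sma$ and exactness of the induction functors propagate this exactness through the colimit defining $\dop$.

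The main obstacle will be the coherence bookkeeping in (E3) and (E4): each vertex of those diagrams is a colimit over a different but canonically related poset, and one must check that the associativity, commutativity, and distributivity isomorphisms for $\sma$---whose mutual coherence is exactly what LaPlaza's theorem (invoked in theorem \ref{symmetricbimonoid}) guarantees---fit together so that the two ways around each diagram induce the same map out of the initial colimit. Once coherence of $\sma$ is in hand, each individual equality is forced by the universal property of the colimit, so the real content is organizing the indexing posets and their maps carefully enough that no incoherent choice is ever made; this is the heart of Grayson's framework \cite{GraysonOps}, which I would follow.
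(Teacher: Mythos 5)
Your proposal is correct and follows the same overall architecture as the paper's proof: everything is reduced to the coherent symmetric bimonoidal structure of theorem \ref{symmetricbimonoid}, associativity of $\btn$ is obtained by identifying both iterated box tensors with a single induction of the triple $\sma$-product via an induction-in-stages isomorphism of iso-sets, (E1)--(E4) are treated as formal consequences of the coherence data (the paper is even terser than you: it simply asserts this, noting that (E3) and (E4) use associativity of $\btn$, whereas your block-inclusion maps $I(\fset{p})\times I(\fset{q})\hookrightarrow I(\fset{p+q})$ and uniqueness-out-of-colimits argument are a legitimate expansion in Grayson's style), and (E5) rests on the extension property of lemma \ref{extproperty}. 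The genuine divergences are local. For associativity the paper does not invoke a projection formula; it inserts the contraction isomorphism $\Iso(\fset{k_3},\fset{k_3})_+\esm^{\Sigma_{k_3}}Y_3\iso Y_3$, applies a commutativity isomorphism of $\esm$ to collect the iso-sets on one side, and then verifies induction-in-stages completely explicitly via $[f_{123},[f_{12},f_3]]\mapsto (f_{12},f_3)\com f_{123}$, checking balancedness, equivariance, and bijectivity by hand. The projection formula you want is in fact proved in the paper --- it is lemma \ref{downthenup}, $G_+\esm^H(Y\sma Z)\iso Y\sma(G_+\esm^H Z)$, established later for the suspension comparison --- so your route is available and arguably cleaner, at the cost of needing that lemma up front. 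For (E5) the paper does not argue dimensionwise; it defers entirely to the cocartesian-diagram manipulations of Lemmas 4.6--4.10 and Corollary 4.11 of \cite{GSAthyops}, observing only that these transfer because they use nothing beyond the coherence data; your simplexwise alternative (away from $X$, the value of $\dop_k$ in each degree is a union of products of sets indexed by $I(\fset{k})$, so exactness can be checked on underlying pointed sets) would give a more self-contained and elementary proof, though as written it is a sketch and you would still need to verify that the quotient in (E5) is computed degreewise by the expected set-level cokernel, which is precisely where the extension property enters. One small caution: your opening claim that $\sma$ ``commutes with all the colimits in sight'' is stronger than what proposition \ref{externalbiexactness} actually provides (preservation of pushouts along cofibrations and of weak equivalences); it suffices here only because the colimits over $I(\fset{k})$ and the inductions are built from iterated pushouts of cofibrations, a point you should make explicit rather than assume.
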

\begin{proof} 
Properties (E1) through (E4) are  consequences of the symmetric bimonoidal structure of theorem \ref{symmetricbimonoid}. 
Properties (E3) and (E4) also 
depend on the associativity of $\btn$ established in proposition \ref{boxtensorassociative}.
Property (E5) depends on the extension property of definition \ref{extensiondefn} and  takes some additional work 
manipulating cocartesian diagrams, cofibration sequences and colimits.
The necessary steps are laid out in \cite[Lemmas 4.6 through 4.10, Corollary 4.11]{GSAthyops}.
Because all those manipulations rely just on the coherence of the symmetric bimonoidal category structure,
all steps work in the present, more general, situation. 
\end{proof}
% \begin{definition}  \label{lambdafunctors}
%  The collection of operations $\dop_{n,k} $ and $\btn$ define functors
%   \begin{equation*}
%     \Lambda^k_{\dop, \btn} \colon \Exact(\Gamma(A) , \cR_f(X, \Sigma_n, \{ {\rm all} \}) )
%            \lra  \Exact( \Gamma^k(A) , \cR_f(X, \Sigma_{n,k}, \{ {\rm all} \}) )
%   \end{equation*}
% Given $M \in \Exact(\Gamma(A),  \cR_f(X, \Sigma_n, \{ {\rm all} \}) )$ and given 
%   $\alpha = (i_1/\ell_1, *_2, i_2/\ell_2, *_3, \cdots , *_k, i_k/\ell_k) \in \Ob(\Gamma^k(A))$,
% define $\Lambda^kM = \Lambda^k_{\dop, \btn}M$ on objects by the formula
% \begin{equation*}
%   \Lambda^kM((i_1/\ell_1, *_2, i_2/\ell_2, *_3, \cdots , *_k, i_k/\ell_k) 
%               = M(i_1/\ell_1) *_2 M( i_2/\ell_2 )  *_3 \cdots  *_k M( i_k/\ell_k)
% \end{equation*}
% where $\dop$ has precendence over $\btn$ and left association is used for $\btn$. The extension
% to arrows is natural.
% \end{definition}
% \begin{remark}
% By property (A3) of definition \ref{Gammacategories} the formula for the value of $\Lambda^k_{\dop, \btn}$ on an object makes sense. 
% Properties (E1) through (E4) in theorem \ref{input1} ensure that the formulas on arrows yield a well-defined functor.  
% Property (E5) of the same theorem ensures that the functors $\Lambda^k_{\dop, \btn}$ carry an exact functor $M$
% to another exact functor.  Compare [p.256--257, \cite{GraysonOps}].
% \end{remark}
The subdivision construction (concatenation), the functors $\Lambda^k_{\dop, \btn}$, and the functors $\Xi_k$ come into play
in the following definition. 
\begin{definition}  \label{omegafunctors}
  For $k \geq 1$, the components $\omega^k$ for the total Segal operation are defined as follows.
\begin{equation}
  \label{operationdiagram}
  \xy \UCMT \xymatrix{
 \Exact( \Gamma(A_1 \ldots A_k ), \cR_f(X, \Sigma_n, \{ {\rm all} \}) )   \ar[r]^{\sub_k\Lambda^k} \ar[d]_{\omega^k}
              &   \Exact( \Gamma^k(A_1\ldots A_k) , \cR_f(X, \Sigma_n, \{ {\rm all} \}) ) \ar[dl]_{\Xi_k}
\\
  \Exact( \Gamma(A_1){\times}\cdots{\times}\Gamma(A_k ), \cR_f(X, \Sigma_n, \{ {\rm all} \}) )  & 
}
\endxy
\end{equation}
\end{definition}
By part 1 of lemma \ref{extproperty} we may interpret 
$\Exact( \Gamma(A_1){\times}\cdots{\times}\Gamma(A_k ), \cR_f(X, \Sigma_n, \{ {\rm all} \}) )$
as 
$G^k_{A_1\ldots A_k} \cR_f(X, \Sigma_n, \{ {\rm all} \})$. 
 The result is a family of functors
\begin{equation*}
  \omega^k \colon w \sub_k G_{\bullet} \cR_f(X) \lra wG^k_{\bullet}\cR_f(X, \Sigma_k, \{\text{all}\})
\end{equation*}
for $k \geq 1$. 

Referring to the discussion preceding definition \ref{subdivision}, 
the stabilization $\eta \colon \cR_f(X) \ra G_0\cR_f(X)$ 
has been concisely written in \cite{GSAthyops} as follows.
\begin{equation*}
  \xy \UCMT \xymatrix@R=-1.5ex@C=1ex{
  &   & Y & \\ 
 (Y,r,s) \mapsto & \eta\bigl((Y,r,s)\bigr) =  \ar@{=}[rr] & & 
 \\  &   & X &} \endxy 
\end{equation*}
The extension to higher simplicial dimensions 
admits the description $(s_0)^k\bigl( \eta(Y) \bigr)$, where $s_0^k \colon G_0\cR_f(X) \ra G_k\cR_f(X) $ is the iterated degeneracy.
This can be denoted
\begin{equation} \label{etadiagram}
  \xy \UCMT \xymatrix@R=-0.25ex@C=1ex{
   &  Y= Y = \cdots = Y  & 
\\
      \ar@{=}[rr] &  &
\\
  &  X = X = \cdots = X & }
\endxy
\end{equation}
where the top row indicates constant filtered object
and the bottom row indicates the constant filtration of the zero object. 
Since $\sub_kG_{\bullet}\cC$ in simplicial dimension 0 can be identified with $G_k\cC$, diagram \ref{etadiagram} also represents 
\begin{equation*}
  \eta \colon \cR_f(X) \lra \sub_k G_0 \cR_f(X)
\end{equation*}
for each $k \geq 1$.  The next example incorporates example \ref{keyex1diamond} and is fundamental.
\begin{example} \label{importantomegavalue}
The formula for the composite
\begin{equation}
  \label{alphaone}
 \tilde{\alpha}_1^k \colon \cR_f(X) 
           \stackrel{\eta}{\ra}  \sub_k G_{\bullet} \cR_f(X) 
              \stackrel{\omega^k}{\lra} G^k_{\bullet}\cR_f(X, \Sigma_k, \{{\rm all}\})
\end{equation}
is the functor $\Gamma([0])^k \ra \cR_f(X, \Sigma_k, \{{\rm all}\})$ given by 
\begin{equation*}
  \begin{cases}
    Y{\sma}Y {\sma}\cdots {\sma} Y,&\text{in positions $0/L$, $0^{(2)}/L^{(2)}$, \ldots, $0^{(k)}/L^{(k)}$,}
\\
    X                             ,& \text{in all other positions.}
  \end{cases}
\end{equation*}
\end{example}
% %
 \section{$E_{\infty}$-structure and restriction to spherical objects} \label{EinftyStructure}
%Last revised 07 July 2017
We have already seen that, 
in order to obtain the algebraic $K$-theory of spaces using the $G_{\bullet}$-model,
one uses a category of prespectra $\Sigma^{\infty}\cR_f(X)$ obtained from $\cR_f(X)$ by passage to a limit using a suspension operation.
We are now going to deal with natural transformations of semi-group valued functors
$[ -, \cR_f(X) ] \ra [ -, \{1\}\times \prod_{n\geq 1} A_{\Sigma_n, \{ {\rm all}\}} (X)]$,
where the target is an abelian-group-valued functor.
First we restrict to categories of $n$-spherical objects $\cR_f^n(X)$, whose definition is recalled below.
Segal's group completion theorem
\cite[Proposition 4.1]{SegalGamma} provides a unique natural transformation of 
abelian-group-valued functors 
$[ -, \Loops\abs{hN_{\Gamma}\cR_f^n(X)}] \ra [-, \{1\}\times \prod_{n\geq 1} A_{\Sigma_n, \{ {\rm all}\}} (X)]$.
In the domain, $hN_{\bullet}\cR_f^n(X)$ is the simplicial category arising from the
categorical sum operation $\wed$, as described in 
\cite[Section 1.8]{Waldhausen85}, and maps are weak homotopy equivalences.
 The following diagram displays this result as the diagonal arrow.
\begin{equation}
  \label{gpcompdiagram1}
  \xy \UCMT \xymatrix{
[ - , \Loops\abs{hN_{\Gamma}\cR_f^n(X)}] \ar@{-->}[d]\ar[drr] &  \ar[l]  [-, \abs{h\cR_f^n(X)}]  \ar[r]  &  [-, \abs{h\cR_f(X)}] \ar[d]_{\omega}
\\
[ -, A(X)]   \ar@{-->}[rr]         &                                       & [- ,  \{1\}\times \prod_{n\geq 1} A_{\Sigma_n, \{ {\rm all}\}} (X)]
}
\endxy
\end{equation}
In this section we show that the diagonal arrow is induced by an $E_{\infty}$-map
$\Loops\abs{hN_{\Gamma}\cR_f^n(X)} \ra  \{1\}\times \prod_{n\geq 1} A_{\Sigma_n, \{ {\rm all}\}} (X)$.

But we want 
a natural tranformation of abelian-group-valued functors
$[ -, A(X) ] \ra [ -, 1\times \prod_{n\geq 1} A_{\Sigma_n, \{ {\rm all}\}} (X)]$
as displayed by the lower horizontal arrow in the diagram, and we want it to be induced by an $E_{\infty}$-map 
$A(X) \ra \{1\} \times \prod_{n \geq 1} A_{\Sigma_n, \{ {\rm all}\}} (X) $.
There is a natural chain of equivalences
\begin{equation*}
    \lim_{n \ra \infty}hN_{\bullet}\cR^n(X) \simeq hS_{\bullet}\cR_f(X) \simeq hS_{\bullet}\Sigma^{\infty}\cR_f(X),
\end{equation*}
where the colimit is taken over suspension relative to $X$ \cite[Theorems 1.7.1 and 1.8.1]{Waldhausen85}.  
This implies we have to examine the behavior of our constructions as
they relate to suspension, which we analyse in  section \ref{Suspension}.

We recall from \cite[Section 1.7, p.360]{Waldhausen85} a definition of spherical objects in the category
$\cR_f(X)$, where $X$ is a connected space.
On this category we have the homology theory 
$H_*(Y,r,s) = H_*\bigl(Y, s(X); r^*(\bZ[\pi_1X])\bigr)$
(homology with local coefficients),
and we say $(Y,r,s)$ is $n$-spherical if 
$H_q(Y,r,s) = 0$ for $q \neq n$ and 
$H_n(Y,r,s)$ is a stably-free $\bZ[\pi_1X]$-module. 
 For $n \geq 0$ denote by $\cR_f^n(X)$ the full subcategory of $\cR_f(X)$ whose 
objects are $n$-spherical.  
For example, in case $X$ is a connected simplicial abelian group, 
$\cR_f^n(X)$ contains spaces homotopy equivalent to retractive spaces $(Y,r,s)$
obtained by completing to pushouts diagrams of the form
\begin{equation*}
  \xy \UCMT \xymatrix{
  X  &  \ar[l]_(0.65){\wed    \phi_i} \bigvee_{i=1}^N \partial \Delta^n \ar@{ >->}[r] &  \Delta^n
}
\endxy
\end{equation*}
where the attaching maps $\phi_i$ are constant maps to the identity element of $X$.

Let $\bN$ be the natural numbers $\{0, 1,\ldots\}$,
and $F$ the category of finite subsets of $\bN$ and injections.  
Let $F_+ \subset F$ be the full subcategory of non-empty finite subsets.  
Let $\coprod$ denote the associative sum on $F_+$ given by 
\begin{equation*}
  \{ x_i | 1 \leq i \leq m\} \coprod \{y_j | 1 \leq j \leq n\} = \{x_i | 1 \leq i \leq m \} \cup \{ y_j + x_m - y_1 + 1| 1 \leq j \leq n\}, 
\end{equation*}
where we assume $x_1 < \cdots < x_m$ and $y_1 < \cdots < y_n$. 
\begin{lemma}[See 10.2 Lemma, p.289, \cite{GSAthyops}]
  The category $F_+$ is contractible. 
\end{lemma}
\begin{proof}
  The functor $t \colon F_+ \ra F_+$ defined by $t(x) = \{0\} \coprod x$ receives natural transformations
from the identity functor on $F_+$ and from the constant functor with value $\{0\}$.  Geometric realization 
of the nerve converts the natural transformations to homotopies, so the identity map on the realization of
the nerve of $F_+$ is homotopic to a constant map.
\end{proof}
Under the assumption that the category $\cC$ satisfy the extension property for cofibrations,
which has been verified for $\cR_f(X)$ and $\cR_f(X, \Sigma_n , \{ \rm{all}\})$ in lemma \ref{extproperty} part 2,  
one may identify the iterated $G_{\bullet}$-construction $G^n_{\bullet}\cC$ with 
$\Exact(\Gamma(-)^n, \cC)$ according to lemma \ref{extproperty}, part 1.  
Using the adjointness relation, or diagonals, we have
\begin{equation*}
  G^3_A(\cC) \mathrel{\mathop :}=  \Exact(\Gamma(A)^3, \cC)  
       =   \Exact(\Gamma(A){\times}\Gamma(A), \Exact(\Gamma(A), \cC)) = \cdots = G_A\bigl( G_A(G_A\cC)\bigr),
\end{equation*}
for example.
Now  extend $n \mapsto \Exact(\Gamma(-)^n, \cC) = G^n_{\bullet}\cC$ to 
$G_{\bullet}^{(-)} \colon F \ra Cat^{\Delta^{\rm op}}$
following the recipe in 
\cite{GSAthyops}. 
Thus, on objects $x \in \Ob(F_+)$ and $A \in \Delta$, put 
$G^x_A\cC \mathrel{\mathop :}= \Exact(\Gamma(A)^x, \cC)$.   
To obtain the extension to $F$, identify $\Gamma(A)^{\emptyset}$ with the one point category, so that 
$G_{\bullet}^{\emptyset}\cC \mathrel{\mathop :}= \Exact(\Gamma(A)^{\emptyset}, \cC) = \cC$.

For the behavior on morphisms we distinguish cases.  An isomorphism $x \ra x'$ in $F$ induces a natural morphism 
$G^x_{\bullet}\cC \ra G^{x'}_{\bullet}\cC$ by permuting coordinates.  An injection $i \colon x \ra y$ induces
$G^x_{\bullet}\cC \ra G^y_{\bullet}\cC$ using stabilization
\begin{equation*}
  \xy \UCMT   \xymatrix{
\Gamma(A)^{i(x)} \ar[r]^(0.3){\iso} \ar[d]^{\equiv}
              &  \Gamma(A)^{i(x)} \times \{L/0\}^{y\backslash i(x)} \ar[r] \ar@{-->}[d] 
                             & \Gamma(A)^{i(x)} \times \Gamma(0)^{y\backslash i(x)}  \ar@{-->}[dl]_{X'}
\\
\Gamma(A)^x  \ar[r]_X     &   \cC    &   \ar@{-->}[l]^(0.65){i_*X}   \Gamma(A)^x \times \Gamma(A)^{y \backslash i(x)} \equiv \Gamma(A)^y \ar[u],
}
\endxy
\end{equation*}
where we recall $\Gamma(0) = \{L/0, R/0\}$ is the two point discrete category,
 and we define $X'$ to be zero outside $\Gamma(A)^{i(x)} \times \Gamma(0)^{y\backslash i(x)}$.
This is the $\eta$-stabilization given by inclusion of $\cC$ on the $L$-line in $G_0\cC$,
as described before definition \ref{subdivision}.

Let $F \gro G_A\cC$ be Thomason's homotopy colimit construction,
which is the category consisting of objects $(x, X \colon \Gamma(A)^x \ra \cC)$
and morphisms $(x,X) \ra (y,Y)$ given by $i \colon x \ra y$ in $F$ and a natural transformation
$i_*X \ra Y$ in $G_A^y\cC$, 
\cite[Def 1.2.2]{Thomasonhocolim}.  The unique morphisms $\emptyset \ra x$ in $F$ provide
functors $\cC \ra \Exact(\Gamma(A)^x, \cC)$, eventually  functors
$\cC \ra F \gro G_A\cC$ natural in $A$, and finally a functor $\cC \ra F \gro G_{\bullet}\cC$.
With the next result, we have made a step toward the righthand column of diagram \ref{gpcompdiagram1}.
\begin{theorem}[Compare  \cite{GSAthyops}, 10.3 Theorem] \label{Kthymodels}
  The construction $F \gro wG_{\bullet}\cC$ gives a model for $K$-theory.
\end{theorem}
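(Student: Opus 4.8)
The plan is to pass from the Grothendieck construction to a homotopy colimit, collapse that homotopy colimit onto the stabilized tower $wG^{\infty}_{\bullet}\cC$, and then read off $K$-theory from equation \eqref{eq:Ginfty}. First I would invoke Thomason's theorem on homotopy colimits of diagrams of categories \cite{Thomasonhocolim} to obtain a natural weak equivalence
\begin{equation*}
  \abs{N(F \gro wG_{\bullet}\cC)} \heq \hocolim_{F} \abs{wG_{\bullet}^{(-)}\cC},
\end{equation*}
so that the homotopy type of the construction is that of the homotopy colimit over $F$ of the diagram $x \mapsto \abs{wG_{\bullet}^{x}\cC}$, whose arrows are induced by the stabilization maps $G^{x}_{\bullet}\cC \ra G^{y}_{\bullet}\cC$ and by the symmetric-group actions $\Aut_{F}(x)=\Sigma_{\abs{x}}$ permuting coordinates. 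Since $\emptyset$ is initial in $F$ with $G^{\emptyset}_{\bullet}\cC = \cC$, I would check that restricting to the contractible subcategory $F_{+}$ does not alter the homotopy colimit, and carry out the remaining analysis over $F_{+}$.

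Next I would compare this homotopy colimit with the telescope of the standard stabilization tower $\cC \ra wG_{\bullet}\cC \ra wG_{\bullet}^{2}\cC \ra \cdots$, whose colimit is by definition $wG_{\bullet}^{\infty}\cC$. The full subcategory $\bN \subset F_{+}$ on the standard sets and standard inclusions carries exactly this tower, and the goal of this step is the equivalence $\hocolim_{F}\abs{wG_{\bullet}^{(-)}\cC} \heq \abs{wG_{\bullet}^{\infty}\cC}$. The essential input is the contractibility of $F_{+}$: it is what allows the symmetric-group automorphisms and the many non-standard injections to contribute only contractible parameter spaces, so that the homotopy colimit over the full symmetric category agrees with the telescope. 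I would establish this by the comma-category analysis of \cite{GSAthyops}, reorganizing the bar construction for $\hocolim_{F_{+}}$ so that the contractibility of $F_{+}$ directly trivializes the diagonal contributions.

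Finally I would appeal to the weak equivalences of equation \eqref{eq:Ginfty}, namely $\abs{wG_{\bullet}^{\infty}\cC} \ra \Loops\abs{wG_{\bullet}^{\infty}S_{\bullet}\cC} \la \Loops\abs{wS_{\bullet}\cC}$, valid for every category $\cC$ with cofibrations and weak equivalences, which identify $\abs{wG_{\bullet}^{\infty}\cC}$ with $\Loops\abs{wS_{\bullet}\cC}$, Waldhausen's $K$-theory of $\cC$. Chaining the three equivalences then exhibits $F \gro wG_{\bullet}\cC$ as a model for $K$-theory.

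I expect the second step to be the main obstacle. The naive cofinality statement for $\bN \hookrightarrow F_{+}$ \emph{fails}: for a fixed object the relevant comma category splits into a discrete family of contractible chains indexed by the image of an injection, reflecting the fact that the stabilization maps are not themselves weak equivalences. It is precisely the symmetric-group automorphisms of $F_{+}$ that must be used to reconnect those components, and the delicate point is to make the contractibility of $F_{+}$ perform this reconnection --- rather than merely to assert that a homotopy-constant diagram over a contractible base has the homotopy type of its value. This is where I would spend the bulk of the effort, following the argument of \cite{GSAthyops}.
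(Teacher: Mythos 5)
Your outline is sound and coincides with the paper's approach: the paper offers no argument of its own here beyond the citation to \cite{GSAthyops}, Theorem 10.3, and the apparatus it assembles around the statement --- the identification $G^x_A\cC = \Exact(\Gamma(A)^x, \cC)$ under the extension property, the contractibility of $F_+$, the lemma that any two morphisms in $F$ induce homotopic maps, and the equivalences of equation \eqref{eq:Ginfty} --- is exactly the scaffolding your three steps invoke. In particular, your diagnosis of the second step (failure of naive cofinality for $\bN \hookrightarrow F_+$ because the stabilization maps are not equivalences, with the contractibility of $F_+$ together with its symmetric-group automorphisms doing the reconnection) correctly locates the one piece of genuine content, and that piece is delegated to \cite{GSAthyops} in your proposal just as it is in the paper.
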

\begin{proof}[Comments on the proof]
The proof given in \cite{GSAthyops} can be summarized in the following chain of weak homotopy equivalences.
  \begin{multline*}
 \Loops \abs{wS_{\bullet}\cC}  \stackrel{(1)}{\lla} \Loops\abs{wG_{\bullet}^{\infty} S_{\bullet}\cC}  \stackrel{(1)}{\lra}   
         \abs{wG^{\infty}_{\bullet}\cC} \stackrel{(2)}{\lra} \abs{wG_{\bullet}G_{\bullet}^{\infty}\cC} \stackrel{(3)}{\lra} 
\\
\stackrel{(3)}{\lra} \abs{F_+ \gro wG_{\bullet}G^{\infty}_{\bullet}\cC} 
 \stackrel{(4)}{\lla} \displaystyle{\colim_{\tilde{t}} \abs{F_+ \gro wG_{\bullet}\cC}} \stackrel{(5)}{\lla} \abs{F_+ \gro wG_{\bullet}\cC} 
      \stackrel{(6)}{\lra} \abs{F \gro wG_{\bullet}\cC}
\end{multline*}
Concerning the links in the chain, the arrows labeled $(1)$ are recorded in  \eqref{eq:Ginfty}; the arrow $(2)$ results 
from swallowing the extra $G_{\bullet}$ into the colimit defining $G^{\infty}_{\bullet}$. That $(3)$ is an equivalence 
depends on the fact that $\abs{F_+ \gro wG_{\bullet}G^{\infty}_{\bullet}\cC} \ra \abs{F_+}$ can be shown to be a quasifibration
with $\abs{F_+}$ contractible. To account for $(4)$, the functor $t \colon F_+ \ra F_+$ induces a functor 
$\tilde{t} \colon F_+ \gro wG_{\bullet}\cC \ra F_+ \gro wG_{\bullet}\cC$ for which
\begin{equation*}
  \colim_{\tilde{t}} F_+ \gro wG_{\bullet}\cC 
     = \colim \bigl( F_+ \gro wG_{\bullet}\cC \stackrel{\tilde{t}}{\ra} F_+ \gro wG_{\bullet}\cC F_+ 
                   \stackrel{\tilde{t}}{\ra} F_+ \gro wG_{\bullet}\cC \stackrel{\tilde{t}}{\ra} \cdots \bigr)
\end{equation*}
is naturally identifiable to
$F_+ \gro w G_{\bullet}G^{\infty}_{\bullet}\cC$. The realizations of the functors $\tilde{t}$ are all cofibrations, so the
inclusion $(5)$ into the base of the telescope is a weak equivalence.  Finally, cofinality of $F_+$ in $F$ implies
that the arrow $(6)$ is a weak homotopy equivalence. 
\end{proof}
% \begin{lemma}
%   If $T \colon F \ra {\rm Top}_*$ is a functor with values in pointed topological spaces
% such that $T(i)$ is a homotopy equivalence for every morphism $i$ in $F$,
% then for every pair of morphisms $i, \, j \colon x \ra y$ in $F$, $T(i)$ and $T(j)$ are homotopic. \qed
% \end{lemma}
% \begin{remark}
%   In particular, each $\sigma \in \Sigma_n$ induces a self map  of 
% $\abs{G_{\bullet}^nG_{\bullet}^{\infty}\cR_f(X, \Sigma_m, \{ {\rm all}\})}$
% that is homotopic to the identity.   Moreover, all the stablization maps 
% $\abs{G_{\bullet}^n\cR_f(X, \Sigma_m, \{ {\rm all}\})} \ra \abs{G_{\bullet}^{\infty}\cR_f(X, \Sigma_m, \{ {\rm all}\})}$
% are homotopic.
% \end{remark}
As in 
\cite{GSAthyops} 
the $E_{\infty}$-structure on the total Segal operation will be described in terms of the following diagram.
\begin{equation}
  \label{SeOp}
  \xy \UCMT \xymatrix{
\cR_f^n(X)    \ar[d]^{\alpha_1}  \ar[dr]^{\tilde{\alpha}_1} &  
\\
\{1 \} \times \prod_{n \geq 1} \cR_f(X, \Sigma_n , \{ {\rm all} \})  \ar[r]^{\beta_1}   \ar[d]^{\alpha_2}  
     &   \{1 \} \times \prod_{n \geq 1}  G^n_{\bullet}  \cR_f(X, \Sigma_n , \{ {\rm all} \})   \ar[d]^{\beta_3}
 \\
 \{1 \} \times \prod_{n \geq 1} F \gro G_{\bullet} \cR_f(X, \Sigma_n , \{ {\rm all} \})   \ar[r]^(0.48){\beta_2} 
     &   \{1 \} \times \prod_{n \geq 1} F \gro G_{\bullet} G^n_{\bullet} \cR_f(X, \Sigma_n , \{ {\rm all} \}) 
}
\endxy
\end{equation}
The components of the map $\tilde{\alpha}_1$ are defined in example \ref{importantomegavalue}.
The other maps in diagram \ref{SeOp} are defined as follows.
\begin{definition} \label{SeOpmaps}
    For $\alpha_1$ the $n$th component $\alpha_1(Y)_n = Y \sma \stackrel{\text{$n$ terms}}{\cdots}  \sma Y$, 
 where $\Sigma_n$ acts by permuting factors using the coherence data.

The maps $\beta_1$ and $\beta_2$ come from stabilizations 
$j^n \colon \cR_f(X , \Sigma_n , \{ {\rm all}\}) \ra G^n_{\bullet} \cR_f(X , \Sigma_n , \{ {\rm all}\})$.

The maps $\alpha_2$ and $\beta_3$ are given by the identification
\begin{equation*}
\cR_f(X , \Sigma_n , \{ {\rm all}\}) \iso \{\emptyset \} \,  \textstyle{\gro} \, G_{\bullet} \cR_f(X , \Sigma_n , \{ {\rm all}\}) 
                        \iso  G^{\emptyset}_{\bullet} \, \cR_f(X , \Sigma_n , \{ {\rm all}\}).  
\end{equation*}
\end{definition}
The category $\cR_f^n(X)$ has the pairing derived from the categorical sum $\wdX$.
This feature allows us to dispense with the subdivision construction. 
Each of the four categories in the lower part of the diagram also has a natural
pairing derived from the box tensor pairings  
\begin{equation*}
   \btn_{k,\ell} \colon  \cR_f(X, \Sigma_k, \{{\rm all}\}) \times \cR_f(X, \Sigma_{\ell}, \{{\rm all}\}) 
                     \lra      \cR_f(X, \Sigma_{k+\ell}, \{{\rm all}\}).
\end{equation*}
Underlying the coherence properties of these pairings are the facts established in theorem \ref{symmetricbimonoid}
 that $\cR_f(X)$ is a category with cofibrations and weak equivalences,
with a categorical sum $\vee$ and a symmetric monoidal biexact product $\sma$.
We refer to \cite[pp.291--292]{GSAthyops} for explicit formulas for the pairings, which are given in the abstract context
of a category $\cC$ with cofibrations and weak equivalences and subcategories $\cC_{\Sigma_n}$ of $\Sigma_n$-equivariant objects.
Here we record only notations for use in the next theorem.
\begin{enumerate}
\item There is a  product denoted $\widetilde{\btn}$ on $\{1 \} \times \prod_{n \geq 1} \cR_f(X,  \Sigma_n , \{ {\rm all} \}) $
and a product also denoted  $\widetilde{\btn}$ on $\{1 \} \times \prod_{n \geq 1} G^n_{\bullet}\cR_f(X,  \Sigma_n , \{ {\rm all} \}) $.
\item There is a product denoted $\widehat{\btn}$ on  $  \{1 \} \times \prod_{n \geq 1} F \gro G_{\bullet} \cR_f(X, \Sigma_n , \{ {\rm all} \}) $
and a product also denoted $\widehat{\btn}$  on $\{1 \} \times \prod_{n \geq 1} F \gro G_{\bullet}G_{\bullet}^n \cR_f(X, \Sigma_n , \{ {\rm all} \})$.
\end{enumerate}
\begin{theorem}
  [Compare \cite{GSAthyops}, 10.7 Theorem, p.292]  \label{structure} \leavevmode
  \begin{enumerate}
  \item In the lefthand column of diagram \ref{SeOp}, the categories 
$\{1 \} \times \prod_{n \geq 1} \cR_f(X, \Sigma_n , \{ {\rm all} \})$ and 
$ \{1 \} \times \prod_{n \geq 1} F \gro G_{\bullet} \cR_f(X, \Sigma_n , \{ {\rm all} \})$ 
with respective composition laws $\widetilde{\btn}$ and $\widehat{\btn}$ 
%  as in definition \ref{compositionlaws} 
inherit symmetric monoidal structures from the coherence data on $\cR_f(X)$. 
  \item In the righthand column of diagram \ref{SeOp}, the categories 
$ \{1 \} \times \prod_{n \geq 1}  G^n_{\bullet}  \cR_f(X, \Sigma_n , \{ {\rm all} \})$ 
and $\{1 \} \times \prod_{n \geq 1} F \gro G_{\bullet} G^n_{\bullet} \cR_f(X, \Sigma_n , \{ {\rm all} \})   $ 
with  respective composition laws $\widetilde{\btn}$ and $\widehat{\btn}$ 
%as in definition \ref{compositionlaws} 
inherit monoidal structures from the coherence data on $\cR_f(X)$.
  \item The maps $\alpha_1$ and $\alpha_2$ in diagram \ref{SeOp}   are maps of symmetric monoidal categories.
  \item The maps $\beta_1$, $\beta_2$, and $\beta_3$ are maps of monoidal categories. 
  \item The map $\beta_2$ is a homotopy equivalence and in the pseudo-additive case $\beta_3$ is also a 
homotopy equivalence.  
  \item The diagram \ref{SeOp} is commutative in the category of monoidal categories. 
  \end{enumerate}  
\end{theorem}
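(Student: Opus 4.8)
The plan is to deduce all six assertions from the general structural results of \cite{GSAthyops} (the theorem 10.7 being compared to) by checking that $\cR_f(X)$ with the pair of operations $(\wdX, \sma)$ supplies exactly the input those results demand. The essential inputs have already been assembled: Theorem \ref{symmetricbimonoid} shows $(\cR_f(X), \wdX, \sma)$ is symmetric bimonoidal with $\sma$ biexact, Proposition \ref{input1} supplies the associativity of $\btn$ together with properties (E1) through (E5), and Lemma \ref{extproperty} supplies the extension property. With these in hand the constructions of Definition \ref{compositionlaws} are well defined, and the verifications below amount to bookkeeping with the coherence data rather than new geometric input.

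For parts (1) and (2) I would build the monoidal structures directly. On $\{1\}\times\prod_{n \geq 1}\cR_f(X,\Sigma_n,\{\text{all}\})$ the associator for $\widetilde{\btn}$ comes from re-indexing the iterated sum $\wed_{r+s+t=n}$, which is legitimate because $\wdX$ is a categorical sum, together with the natural associativity isomorphism for $\btn$ from Proposition \ref{input1}; the unit is the sequence of Definition \ref{compositionlaws}, with unit isomorphisms supplied by the multiplicative-unit isomorphisms $\lambda,\rho$ recorded in the proof of Theorem \ref{symmetricbimonoid}. Symmetry of $\widetilde{\btn}$ follows by combining the involution $s\leftrightarrow t$ on the indexing set with the commutativity isomorphism $\gamma$ for $\sma$, hence for $\btn$; this gives the symmetric monoidal structure in part (1), and the same recipe runs on $\{1\}\times\prod_n F\gro G_{\bullet}\cR_f(X,\Sigma_n,\{\text{all}\})$, using the symmetry of $\amalg$ in $F$ and functoriality of the insertions $\ell$. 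For part (2) the identical construction produces associators and units on the $G^n_{\bullet}$-categories from (E1) through (E4); the point to emphasize is that symmetry is lost, since interchanging $Y_s$ and $Z_t$ would demand a block transposition of the factors of $\Gamma(A)^n$ together with the corresponding nontrivial element of $\Sigma_n$, which is not a categorical identity, so only a non-symmetric monoidal structure survives.

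Parts (3) and (4) assert that the maps in diagram \ref{SeOp} are morphisms of (symmetric) monoidal categories, and here I would observe that each map is built from a construction already compatible with $\btn$. The map $\alpha_1$ sends $Y$ to $(Y^{\sma n})_n$ with $\Sigma_n$ permuting factors, so it converts the sum $\wdX$ into the product $\widetilde{\btn}$ via the distributivity isomorphism \eqref{leftdist} and the definition of $\btn$ as an induced object; its monoidal comparison is the resulting exponential law. The maps $\alpha_2$ and $\beta_3$ are the inclusions of the empty-set component recorded in Definition \ref{SeOpmaps}, monoidal by naturality of the $F\gro G_{\bullet}$-construction, while $\beta_1$ and $\beta_2$ are induced by the stabilizations $j^n$, which are compatible with the products because $\btn$ is biexact and $j^n$ is defined by the same levelwise formulas. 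In each case, checking compatibility with the associators and with the symmetry, where present, reduces to the coherence already established for $(\cR_f(X),\wdX,\sma)$.

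The homotopy statements in (5) and the commutativity in (6) are where the real work lies. For part (5), $\beta_2$ is a homotopy equivalence because it is the stabilization $j^n$ performed inside the $F\gro G_{\bullet}$-construction, which models $K$-theory by Theorem \ref{Kthymodels} and in which the further $G^n_{\bullet}$-stabilization is an equivalence by \eqref{eq:Ginfty}; and $\beta_3$, the empty-set inclusion, is a homotopy equivalence exactly when the $F\gro G_{\bullet}$-construction preserves the homotopy type of $G^n_{\bullet}\cR_f(X,\Sigma_n,\{\text{all}\})$, which holds in the pseudo-additive case established for $\Sigma^{\infty}\cR_f(X)$ in the discussion following Definition \ref{Gdotdef}. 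Part (6) I expect to be the main obstacle: it is not enough that the underlying square of functors commutes up to natural isomorphism; one must exhibit the comparison as a \emph{monoidal} natural isomorphism, compatibly with all four products of Definition \ref{compositionlaws}. The strategy is to trace the two composites $\beta_3\com\beta_1$ and $\beta_2\com\alpha_2$ on an object, identify both with the evident functor into $\{1\}\times\prod_n F\gro G_{\bullet}G^n_{\bullet}\cR_f(X,\Sigma_n,\{\text{all}\})$, and verify compatibility of the identification with $\widehat{\btn}$ by reducing, through the multi-exactness of $\Xi_k$ and properties (E1) through (E5), to the coherence of $\sma$ and the categorical-sum property of $\wdX$. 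Since every isomorphism invoked ultimately descends from the symmetric bimonoidal coherence of $\cR_f(X)$, the pentagon and hexagon conditions needed to certify the comparison as monoidal hold automatically, which is precisely the payoff of the coherence theorem cited in the proof of Theorem \ref{symmetricbimonoid}.
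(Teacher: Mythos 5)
Your proposal is correct and takes essentially the same route as the paper: both reduce the theorem to the corresponding result of \cite{GSAthyops} via LaPlaza's coherence theorem, with the (lax) symmetric monoidal structure on $\alpha_1$ arising from the distributivity/induction isomorphism --- your ``exponential law'' is precisely the Cartan multinomial formula $\gamma^k_n$ the paper writes down --- with $\alpha_2$, $\beta_3$ handled as subcategory inclusions and with $\beta_3$'s homotopy equivalence deferred to the pseudo-additivity supplied by suspension in the following section. The paper's own argument is terser, treating parts (1), (2), (4), and (6) as formal consequences of the coherence data, but your added detail (including the observation that symmetry is lost in part (2)) is consistent with it rather than a different method.
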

\begin{theorem}
  \label{powerseries}
Let $X$ be a connected simplicial abelian group.  The functor
\begin{equation*}
  Z \mapsto [Z, \{1\} \times \prod_{n \geq 1} A_{\Sigma_n, \{ {\rm all} \}}(X)]
\end{equation*}
takes values in the category of abelian groups. 
\end{theorem}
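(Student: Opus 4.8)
The plan is to recognize the operation $\widetilde{\btn}$ of definition \ref{compositionlaws}, transported to homotopy classes, as multiplication of ``power series with constant term $1$'', and to invert every such series by the classical recursion. First I would note that for each $n \geq 1$ the space $A_{\Sigma_n, \{{\rm all}\}}(X) = \Loops\abs{wS_{\bullet}\cR_f(X, \Sigma_n, \{{\rm all}\})}$ is an infinite loop space, so that $R_n := [Z, A_{\Sigma_n, \{{\rm all}\}}(X)]$ is an abelian group for every $Z$; this additive structure is the one induced by the categorical sum $\wed$ appearing in definition \ref{compositionlaws}. Writing an element of $[Z, \{1\} \times \prod_{n \geq 1} A_{\Sigma_n, \{{\rm all}\}}(X)]$ as a sequence $(r_n)_{n \geq 1}$ with $r_n \in R_n$ and with $r_0 = 1$ understood to be the fixed unit coordinate, the effect of $\widetilde{\btn}$ on homotopy classes is the convolution
\begin{equation*}
  (r_n) \,\widetilde{\btn}\, (s_n) = \Bigl( \sum_{i+j=n} r_i \btn s_j \Bigr)_{n \geq 1},
\end{equation*}
since the coordinatewise $\wed$ becomes $+$ and the biexact pairing $\btn$ of definition \ref{boxtensor} descends to a biadditive pairing $R_i \times R_j \to R_{i+j}$.

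By theorem \ref{structure} the product $\widetilde{\btn}$ carries a symmetric monoidal, indeed $E_{\infty}$, structure; passing to $[Z, -]$ therefore gives a commutative monoid with two-sided unit $(1, 0, 0, \ldots)$. I would emphasize that this monoid is literally the multiplicative monoid of those power series in the $R_n$ whose constant term is the unit $1 = [S^0]$. Distributivity of $\btn$ over the coordinatewise addition, needed below, is exactly the biadditivity just used, which rests on the biexactness of theorem \ref{symmetricbimonoid} together with the exactness of the induction functor entering definition \ref{boxtensor}. It remains only to supply inverses.

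Given $u = (1, r_1, r_2, \ldots)$ I would solve $u \,\widetilde{\btn}\, v = (1, 0, 0, \ldots)$ for $v = (1, s_1, s_2, \ldots)$ by recursion on the degree: set $s_0 = 1$ and, for $n \geq 1$,
\begin{equation*}
  s_n = - \sum_{i=1}^{n} r_i \btn s_{n-i}.
\end{equation*}
This is legitimate because the sum involves only the previously defined terms $s_0, \ldots, s_{n-1}$ and because each $R_n$ is an abelian group, so the indicated additive inverse exists; moreover each $s_n$ is fixed by a finite computation, so $v$ is a genuine element of the product $\prod_{n \geq 1}$ with no convergence to check. Reading off the degree-$n$ coordinate of $u \,\widetilde{\btn}\, v$ and using that $r_0 = 1$ acts as the multiplicative identity yields $s_n + \sum_{i=1}^{n} r_i \btn s_{n-i} = 0$ for every $n$, so $v$ is a left inverse; commutativity of $\widetilde{\btn}$ upgrades it to a two-sided inverse. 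Hence the commutative monoid $[Z, \{1\} \times \prod_{n \geq 1} A_{\Sigma_n, \{{\rm all}\}}(X)]$ is an abelian group, naturally in $Z$.

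The one point demanding care, and the place where I expect the real content to sit, is the compatibility of the two structures on $[Z, -]$: the abelian group furnishing the minus sign in the recursion must be the same structure over which $\btn$ is bilinear, so that distributivity holds on homotopy classes and not merely up to coherent homotopy. This is precisely what the biexactness of $\btn$ and the monoidal compatibilities of theorem \ref{structure} guarantee; granting them, the remainder is the formal power-series inversion carried out above.
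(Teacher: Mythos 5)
Your proposal is correct and takes essentially the same route as the paper: the paper obtains the abelian monoid structure from the symmetric monoidal product of theorem \ref{structure} and then inverts by ``repeating the argument of Lemma 2.3 of \cite{Waldhausen82}'', which is precisely the power-series recursion $s_n = -\sum_{i=1}^{n} r_i \btn s_{n-i}$ you write out explicitly. The distributivity concern you flag is exactly what the biexactness of $\btn$ (resting on theorem \ref{symmetricbimonoid}) and the coherence data underlying theorem \ref{structure} supply, so your argument is a spelled-out version of the paper's citation.
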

\begin{proof}
By theorem \ref{Kthymodels} we take 
\begin{equation*}
   \{1\} \times \prod_{n \geq 1} A_{\Sigma_n, \{ {\rm all} \}}(X) 
              =  \{1 \} \times \prod_{n \geq 1} \abs{F \gro G_{\bullet} \cR_f(X, \Sigma_n , \{ {\rm all} \})}.
\end{equation*}
Since the category  $\{1 \} \times \prod_{n \geq 1} F \gro G_{\bullet} \cR_f(X, \Sigma_n , \{ {\rm all} \})$ 
has a symmetric monoidal structure by part 1 of theorem \ref{structure}, 
the functor $[ -, \{1\} \times \prod_{n \geq 1} A_{\Sigma_n, \{ {\rm all} \}}(X)]$ takes values in the category of abelian monoids.
Repeating the argument of \cite[Lemma 2.3, p.404]{Waldhausen82} shows that values taken are actually in the category of abelian groups. 
  \end{proof}
\begin{proof}[Remarks on the proof of theorem \ref{structure}.]
The entire proof of the analogous result in 
\cite[pp.293--295]{GSAthyops} is essentially a formal appeal to LaPlaza's coherence theorem \cite{LaPlazaCoh1}, so 
it carries over completely.

The reader who investigates further will find 
the symmetry of the pairing on $\{1 \} \times \prod_{n \geq 1} F \gro G_{\bullet} \cR_f(X, \Sigma_n , \{ {\rm all} \})$
involves manipulating products of values of functors
\begin{equation*}
  Y \in G^m_{\bullet}\cR_f(X, \Sigma_n , \{ {\rm all} \})
\; \text{and} \;
Z \in G^n_{\bullet}\cR_f(X, \Sigma_n , \{ {\rm all} \}). 
\end{equation*}
 What is required is comparison of an expression
\begin{equation*}
  Y(i_1/j_1, \ldots, i_m/j_m) \sma Z(i'_1/j'_1, \ldots, i'_n/j'_n) 
\; \text{with} \;
  Z(i'_1/j'_1, \ldots, i'_n/j'_n) \sma Y(i_1/j_1, \ldots, i_m/j_m)
\end{equation*}
and one sees that, not only are commutativity isomorphims for $\sma$ involved, but so are permutations of
inputs, which are taken care of by means of the homotopy colimit.  

Another interesting part of the proof is the claims about the maps $\alpha_1$ and $\alpha_2$,
so it deserves a comment. The biexactness and coherence of $\sma$ give canonical natural isomorphisms $\gamma_n^k$
called Cartan multinomial formulas:
\begin{equation*}
  \gamma^k_n \colon \bigl(\sma \bigr) _n \Bigl( \bigvee_{i=1}^k c_i \Bigr) \stackrel{\iso}{\lra}
        \bigvee_{s_1+\cdots s_k = n} \Ind_{\Sigma_{s_1} \times \cdots \Sigma_{s_k}}^{\Sigma_n} \bigl(\sma\bigr)_{i=1}^k \Bigl( \bigl(\sma \bigr)_{s_i} c_i \Bigr).
\end{equation*}
 These induce natural isomorphisms
\begin{equation*}
  \gamma^k \colon \alpha_1 \com \wdX^k \stackrel{\iso}{\Rightarrow} \bigl( \widetilde{\btn} \bigr)^k \com \alpha_1^k.
\end{equation*}
Then the coherence theorem implies that $\alpha_1$ has a (lax) symmetric monoidal structure.
The functor $\alpha_2$ is the inclusion of a symmetric monoid subcategory, so the assertion for $\alpha_2$ is immediate.

In contrast to the algebraic roles played by $\alpha_1$ and $\alpha_2$, the roles of $\beta_1$, $\beta_2$, and $\beta_3$ 
are to assure us that we are ending in the correct target.  Since the proof that $\beta_3$ is a homotopy equivalence requires 
the pseudo-additivity condition, which is fulfilled by suspension, this part of the argument actually depends on the next section.
\end{proof}
% %
\section{Suspension}  \label{Suspension}
%Revised 07 July  2017
The main theorem of this section is
\begin{theorem}
  \label{infiniteloop}
Let $X$ be a simplicial abelian group.  The total Segal operation
\begin{equation*}
  \omega \colon A(X) \lra  \{1\} \times \prod_{n \geq 1} A_{\Sigma_n, {\rm all}}(X)
\end{equation*}
carries an infinite loop map structure.
\end{theorem}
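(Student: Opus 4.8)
The plan is to build the infinite loop structure in two stages: first at a fixed spherical dimension, where all the relevant categories are genuinely symmetric monoidal, and then in the suspension limit that recovers $A(X)$. At a fixed dimension $n$ the category $\cR_f^n(X)$ is symmetric monoidal under the categorical sum $\wdX$, since a union of $n$-spherical objects along $X$ is again $n$-spherical. Theorem \ref{structure}, together with the Cartan multinomial isomorphisms $\gamma^k_n$, shows that the total operation assembled from the $\alpha_i$ and $\beta_j$ of diagram \ref{SeOp} is a map of symmetric monoidal categories with target $\{1\}\times\prod_{m\geq1}F\gro G_\bullet\cR_f(X,\Sigma_m,\{\mathrm{all}\})$. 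First I would apply Segal's group completion theorem \cite[Proposition 4.1]{SegalGamma} to this symmetric monoidal functor, exactly as in diagram \ref{gpcompdiagram1}, to obtain for each $n$ an infinite loop map
\[ \Loops\abs{hN_{\Gamma}\cR_f^n(X)} \lra \{1\}\times\prod_{m\geq1}A_{\Sigma_m,\{\mathrm{all}\}}(X), \]
the target already being an infinite loop space through the $F\gro G_\bullet$ model of Theorem \ref{Kthymodels}.

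Second, I would assemble these maps as $n$ varies. The chain $\colim_n hN_\bullet\cR^n(X)\heq hS_\bullet\cR_f(X)\heq hS_\bullet\Sigma^\infty\cR_f(X)$ (Waldhausen's Theorems 1.7.1 and 1.8.1, \cite{Waldhausen85}) identifies $A(X)$ with the colimit of the group completions $\Loops\abs{hN_{\Gamma}\cR_f^n(X)}$ taken along the suspension functor $\Sigma$ relative to $X$. It therefore suffices to check that the fixed-dimension operations are compatible, up to coherent homotopy, with the stabilization maps $\Sigma\colon\cR_f^n(X)\to\cR_f^{n+1}(X)$, so that they pass to the colimit and glue into a single map on $A(X)$.

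The hard part will be precisely this compatibility with suspension, which is the reason the present section is needed at all. Because $\omega^k$ is built from the $k$-fold $\sma$-power (Definition \ref{omegafunctors} and Example \ref{importantomegavalue}), feeding in a suspended object produces $(\Sigma Y)\sma\cdots\sma(\Sigma Y)$, which is $\Sigma^k(Y\sma\cdots\sma Y)$ equipped with an \emph{extra} $\Sigma_k$-action permuting the $k$ new suspension coordinates alongside the smash factors. Thus $\omega^k$ does not strictly commute with $\Sigma$: the discrepancy is a $\Sigma_k$-equivariant twist by a permutation of sphere coordinates. My plan is to absorb this by passing to the stabilized construction $G_\bullet^\infty$, where, by the remark following the lemma on functors $T\colon F\to\mathrm{Top}_*$, every $\sigma\in\Sigma_k$ induces a self-map of $\abs{G_\bullet^nG_\bullet^\infty\cR_f(X,\Sigma_m,\{\mathrm{all}\})}$ homotopic to the identity, and all stabilization maps are mutually homotopic. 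In the limit the offending twist becomes homotopically trivial, so $\omega$ commutes with suspension up to coherent homotopy, and the fixed-dimension maps pass to the colimit.

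Finally I would check that the homotopies produced above are themselves compatible with the symmetric monoidal structure, so that the colimit map is not merely a loop map but an infinite loop map. Since every comparison in diagram \ref{SeOp} is monoidal by Theorem \ref{structure}, and the suspension compatibilities are natural with respect to $\wdX$ and $\widetilde{\btn}$, the monoidal structure survives the passage to the colimit; Segal's machine then delivers the infinite loop map structure on $\omega\colon A(X)\to\{1\}\times\prod_{n\geq1}A_{\Sigma_n,\{\mathrm{all}\}}(X)$, as claimed.
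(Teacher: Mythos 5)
Your two-stage skeleton (group completion on the spherical subcategories via Segal, then passage to the colimit over suspension) is the same as the paper's, and you correctly diagnose the central difficulty: $\omega^k(S^1\sma Y)$ is a $k$-fold suspension $({\dop_k}S^1)\sma\omega^k(Y)$ carrying a $\Sigma_k$-twist of the sphere coordinates, not a single suspension. But your resolution of that difficulty has a genuine gap. The remark you invoke — that in $G_{\bullet}^{\infty}$ every $\sigma\in\Sigma_k$ acts by a map homotopic to the identity and all stabilizations are mutually homotopic — produces only one unstructured homotopy at a time; it gives no coherence among these homotopies, no compatibility with $\wdX$ and $\widetilde{\btn}$, and hence no way to transport an $E_{\infty}$-structure through the colimit. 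Your phrase ``$\omega$ commutes with suspension up to coherent homotopy'' asserts precisely what must be proved. Worse, before any such fix the comparison square cannot even be written down at the categorical level where the colimit is formed: the two composites land in a $1$-fold versus a $k$-fold suspension of the target, and single suspension is not invertible as a functor, so one cannot ``compose with a homotopy inverse'' and still have functorial data to feed into a colimit of categories.

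The paper's actual mechanism avoids this by changing the stabilization on the target rather than trying to untwist after the fact: it stabilizes $wG^k_{\bullet}\cR_f(X,\Sigma_k,\{{\rm all}\})$ along the twisted suspension $\dop_kS^1\sma{-}$ and proves in proposition \ref{lont} a genuine \emph{natural isomorphism} $\omega^k\com(S^1\sma{-})\cong(\dop_kS^1\sma{-})\com\omega^k$, whose key computational input is the untwisting isomorphism $G_+\esm^H(Y\sma Z)\cong Y\sma(G_+\esm^H Z)$ of lemma \ref{downthenup} and proposition \ref{lontinput}, which moves the sphere factors past the induction functors $\Ind$. Coherence is then supplied not by ad hoc homotopies but by the symmetric monoidal coherence of $\sma$ established in section \ref{General}: this lets the two suspension systems be encoded as op-lax functors $\Phi_1,\Phi_2\colon F\ra\Cat^{\Delta^{\op}}$ and lets $\omega^k$ be promoted to a left-op natural transformation $\epsilon\colon\Phi_1\Rightarrow\Phi_2$, so that Thomason's Grothendieck construction yields an honest functor $F\gro\epsilon\colon F\gro\Phi_1\ra F\gro\Phi_2$ (theorem \ref{stableoperations}) through which the infinite loop structure passes. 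The homotopy-triviality remark you cite does play a role, but a different one: together with theorem \ref{Kthymodels} it identifies the twisted-suspension homotopy colimit as still computing $A_{\Sigma_k,\{{\rm all}\}}(X)$. It identifies the target; it cannot substitute for the natural isomorphism plus coherence that actually carry the $E_{\infty}$-map to the limit.
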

Section \ref{EinftyStructure} has delivered an infinite loop map
$\Loops\abs{hN_{\Gamma}\cR_f^n(X)} \ra \{1\} \times \prod_{n \geq 1} A_{\Sigma_n, {\rm all}}(X)  $
with domain the $K$-theory of a category of $n$-spherical objects. To obtain theorem \ref{infiniteloop} 
we have to examine the passage to the limit over suspension in view of Waldhausen's result
\begin{equation*}
    \lim_{n \ra \infty}hN_{\bullet}\cR^n_f(X) \simeq hS_{\bullet}\cR_f(X).
\end{equation*}
The technically challenging part is the compatibility of the operations with suspension.
Fortunately, the machinery  set up in  \cite[section 10]{GSAthyops} is sufficiently general
that we need only extend some definitions and quote  a sequence of results to prove our generalization.

First we need a description of the suspension operation that is  amenable to coherence considerations.  
To this end, we go step-by-step through Waldhausen's cone and suspension constructions and identify 
the result with a construction involving the operation $\esm$. 
 The cone construction for $(Y, r,s)$ in $\cR_f(X)$ takes the ordinary mapping
 cylinder of the retraction $M(r)$ and collapses out the cylinder $\Delta^1{\times}X$ so that 
end result is in $\cR_f(X)$.  To amplify the definition, consider the following diagram, 
which fulfills the hypotheses of lemma \ref{iteratedcolimlemma}.
\begin{equation} \label{conediagram}
  \xy \UCMT  \xymatrix{
Y \coprod X & \ar[l]_{\id \coprod r}   \partial \Delta^1 \times Y   \ar@{ >->}[r]   &     \Delta^1 \times Y
\\
X \coprod X \ar[d]  \ar@{ >->}[u] 
                &   \ar[l]          \partial \Delta^1 \times X   \ar@{ >->}[r] \ar[d]   \ar@{ >->}[u]
                                                   &     \Delta^1 \times X \ar@{ >->}[u] \ar[d]
\\
X           &   \ar[l]               X  \ar@{ >->}[r]   &    X
}
\endxy
\end{equation}
Taking the pushouts of the rows produces a diagram
\begin{equation*}
  \xy \UCMT \xymatrix{
X & \ar[l]  \Delta^1  \times X   \ar@{ >->}[r] & M(r)
}
\endxy
\end{equation*}
where $M(r)$ is the usual mapping cylinder of $r$ and the pushout of the top row. As described above, taking the pushout of this diagram
produces $cY$, the underlying space of the cone construction.  
The retraction to $X$ arises from a map of diagram \eqref{conediagram} to a trivial diagram of identity maps 
on $X$; the section $X \ra cY$ and a cofibration $\xy \UCMT \xymatrix{ i \colon Y \ar@{ >->}[r] & cY}\endxy$ arise from canonical
maps of ingredients of the diagram to the colimit.   
Then the suspension $\Sigma Y$ is defined as the pushout of the diagram 
$  \xy \UCMT \xymatrix@1{X & \ar[l]_{r} Y \ar@{ >->}[r] & cY} \endxy$.
\begin{lemma}  \label{smashequalscone}
  For $Y \in \cR_f(X)$ there is a commuting diagram
  \begin{equation}  \label{cone1}
\xy \UCMT \xymatrix{
    \{0\} \times Y   \ar[d]  \ar@{ >->}[r]^{i_0}     &    \Delta^1_1 \esm Y   \ar[d]_{\iso}
\\
   Y \ar@{ >->}[r]^i &     cY 
 }
\endxy 
  \end{equation}
where $\Delta^1_1 \in \cR_f(*)$ is the standard simplicial one-simplex given the base point $1$,
and $i_0$ is induced from the inclusion $\{0\} \ra \Delta^1$.

Moreover,
  \begin{equation*}
 \Sigma Y \mathrel{\mathop :}= cY/Y \iso  S^1  \esm Y,
 \end{equation*}
 where $S^1 = \Delta^1/\partial \Delta^1$ is the standard simplicial circle. 
\end{lemma}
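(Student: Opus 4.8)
The plan is to realize both $cY$ and $\Delta^1_1 \esm Y$ as one and the same quotient of $\Delta^1 \times Y$, so that the isomorphism in \eqref{cone1} becomes an identity of colimits. First I would unwind the definition of the cone: taking pushouts of the rows of diagram \eqref{conediagram} presents the mapping cylinder $M(r)$ as $(\Delta^1\times Y)/\!\sim$ with $(1,y)\sim r(y)$, and then the final pushout collapsing $\Delta^1\times X$ to $X$ adjoins the relations $(t,s(x))\sim x$ for all $t\in\Delta^1$, giving
\begin{equation*}
  cY \iso (\Delta^1\times Y)\big/\bigl[(1,y)\sim r(y),\ (t,s(x))\sim x\bigr].
\end{equation*}
On the other side, specializing definition \ref{externalsmash} with $X_1 = \{*\}$ and $Y_1 = \Delta^1_1$ (basepoint the vertex $1$), the subspace $\Delta^1\times X \cup_{\{1\}\times X}\{1\}\times Y$ appearing in \eqref{extsmashprod} is collapsed to $X$ by $r_1\esm r_2$, which on $\{1\}\times Y$ is $r$ and on $\Delta^1\times X$ is the projection. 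These are exactly the two families of relations above, so $\Delta^1_1\esm Y$ is the same quotient, canonically.

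Next I would record the comparison square. Both the cone inclusion $i\colon Y\ra cY$ and the map $i_0\colon \{0\}\times Y\ra \Delta^1_1\esm Y$ induced by $\{0\}\hookrightarrow\Delta^1$ are the canonical maps sending $y$, respectively $(0,y)$, to its class in the common quotient; the left vertical identification $\{0\}\times Y\iso Y$ carries one to the other, so \eqref{cone1} commutes and its right-hand arrow is precisely the identification of colimits just produced. One checks that the retraction and section match on the nose, so this is an isomorphism in $\cR_f(X)$ and not merely of underlying spaces.

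For the suspension statement I would exploit the quotient map $q\colon \Delta^1_1 \ra S^1 = \Delta^1/\partial\Delta^1$ in $\cR(*)$, which sends the basepoint $1$ to the image of $\partial\Delta^1$. Applying the bifunctor $-\esm Y$ to $q$, the induced map $cY\iso\Delta^1_1\esm Y \ra S^1\esm Y$ has the effect of additionally identifying $(0,y)$ with $(1,y)$, hence of collapsing the remaining end $\{0\}\times Y\iso Y$ to $X$ along $r$. Writing $S^1\esm Y$ out as a quotient of $\Delta^1\times Y$ gives
\begin{equation*}
  S^1\esm Y \iso (\Delta^1\times Y)\big/\bigl[(0,y)\sim(1,y)\sim r(y),\ (t,s(x))\sim x\bigr],
\end{equation*}
which is exactly $cY$ with the copy of $Y$ at the $0$-end collapsed to $X$ via $r$, i.e.\ the pushout $\Sigma Y = cY/Y$ of $X\la Y\ra cY$. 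Hence $\Sigma Y\iso S^1\esm Y$.

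The routine but genuinely delicate point is keeping the two boundary identifications straight: the cone collapses the $\{1\}$-end through the mapping cylinder but retains the $\{0\}$-end as $Y$, whereas $S^1=\Delta^1/\partial\Delta^1$ forces \emph{both} endpoints to the basepoint. The extra identification at the $\{0\}$-end is supplied precisely by the quotient $cY\ra cY/Y$, and I expect the main obstacle to be verifying cleanly that the collapse coming from $q\esm\id_Y$ agrees with the collapse-to-$X$-along-$r$ that defines $\Sigma Y$, while checking throughout that sections and retractions are respected so that every map is a morphism of retractive spaces over $X$.
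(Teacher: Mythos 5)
Your proposal is correct, and it establishes the same two identifications as the paper but by a genuinely more concrete route. The paper never writes down a single common presentation: for the square \eqref{cone1} it produces the isomorphism $\Delta^1_1 \esm Y \iso cY$ as a pair of mutually inverse maps $\eta_1$ and $\bar{\eta}_1$ extracted from the universal properties of the defining pushouts (diagrams \eqref{smashtocone}, \eqref{conecompare1}, \eqref{conecompare2}), and for the suspension statement it forms the cofibration $\partial\Delta^1_1 \esm Y \iso Y \ra \Delta^1_1 \esm Y$ (the variant of example \ref{esmaction} with $\partial\Delta^1_1$ in place of $S^0$, diagram \eqref{suspension1}), identifies the pushout with $\Sigma Y$, and then invokes proposition \ref{externalbiexactness} to commute the quotient past $-\esm Y$, yielding $(\Delta^1_1/\partial\Delta^1_1)\esm Y = S^1 \esm Y$. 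You instead exhibit $cY$, $\Delta^1_1\esm Y$, $\Sigma Y$, and $S^1\esm Y$ as explicit quotients of $\Delta^1\times Y$ by generated relations and simply match the relations, obtaining the suspension claim by applying $-\esm Y$ to the quotient map $q\colon \Delta^1_1 \ra S^1$ and computing its effect directly rather than quoting colimit preservation; your relation bookkeeping (the $\{1\}$-end collapsed via $r$ in both objects, the extra identification $(0,y)\sim(1,y)$ supplied exactly by $cY \ra cY/Y$) is accurate, as is your check that sections and retractions agree. What each approach buys: yours is shorter and makes the isomorphism essentially an identity of colimits, but it is special to simplicial sets, where colimits are computed dimensionwise in sets --- note the relations must be read on simplices, with $(0,y)$ meaning the appropriate degeneracy of the vertex $0$ paired with a simplex of $Y$; the paper's universal-property argument survives in settings where pointwise descriptions are unavailable (compare the remark on \cite{MaySigurdssonPHT}) and matches the coherence-theoretic style of section \ref{General}, which is what the later sections actually lean on. One step you should make explicit rather than leave tacit: merging an iterated quotient into a single quotient by the combined relation, and the surjectivity of the collapse maps guaranteeing that $X$ embeds in each quotient, are exactly the manipulations the paper isolates in isomorphism \eqref{pushoutpushout} and lemma \ref{iteratedcolimlemma}, so a citation there would close the only informal gap in your write-up.
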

\begin{proof}
  Pass to pushouts in the commutative diagram
  \begin{equation}  \label{smashtocone}
     \xy \UCMT  \xymatrix@C+1ex{
 X \ar[d]  
        &   \ar[l]_(0.75){p_2 \cup rp_2}   \Delta^1_1  \times X  \cup_{ \{1 \}  \times X}  \{1 \} \times Y   \ar@{ >->}[r] \ar[d]_{\id \cup r}   
                     &   \Delta^1 \times Y  \ar[d]
\\
     X   &  \ar[l]_{p_2}   \Delta^1  \times X       \ar@{ >->}[r]  &     M(r)  
}
\endxy
  \end{equation}
to obtain a unique natural map $\eta_1 \colon \Delta^1_1 \esm Y   \ra cY$ making the following diagram commute.
\begin{equation} \label{conecompare1}
  \xy \UCMT \xymatrix@R=1ex{
  &    \Delta^1_1 \esm Y \ar[dd]_{\eta_1}  &   
\\
 \Delta_1^1  \times Y \ar[ur] \ar[dr] &   &   X \ar@{ >->}[ul]_{s'} \ar@{ >->}[dl]
\\
  & cY   & 
}
\endxy
\end{equation}
Restricting $ \Delta_1^1  {\times} Y \ra  \Delta_1^1  \esm Y $ to $  \partial \Delta_1^1 {\times} Y$ yields a diagram
\begin{equation*}
     \xy \UCMT  \xymatrix{
\partial \Delta_1^1  {\times} Y  \ar@{ >->}[r]  \ar[d]_{r'}  &  \Delta_1^1 {\times} Y  \ar[d]
\\
 Y \coprod X  \ar@{ >->}[r]^{i'}   &     \Delta_1^1 \esm Y,
}
\endxy
  \end{equation*}
where $r'(0,y) = y$, $r'(1,y)=r(y)$ and $i'(y) = i_0(y)$, $i'(x) = s'(x)$.
There results a canonical arrow $M(r) \ra  \Delta_1^1 \esm Y $ such that the following square commutes.
\begin{equation*}
  \xy \UCMT \xymatrix{
  \Delta_1^1  \times X \ar@{ >->}[r] \ar[d]_{p_2} &    M(r) \ar[d] 
\\
  X    \ar@{ >->}[r]^{s'}  &      \Delta_1^1  \esm Y .
}
\endxy
\end{equation*}
In turn, there is a unique map $\bar{\eta}_1 \colon cY \ra  \Delta_1^1 \esm Y$ such that the following diagram commutes.
\begin{equation} \label{conecompare2}
  \xy \UCMT \xymatrix@R=1ex{
  &   cY  \ar[dd]_{\bar{\eta}_1}  &   
\\
  \Delta_1^1{\times} Y \ar[ur] \ar[dr] &   &   X \ar@{ >->}[ul] \ar@{ >->}[dl]^{s'}
\\
  &  \Delta^1_1 \esm Y  & 
}
\endxy
\end{equation}
Combining diagrams \eqref{conecompare1} and \eqref{conecompare2} shows that $\eta_1$ and $\bar{\eta}_1$ are mutually inverse
isomorphisms, relative to the common subspace $X$ and compatible with the retractions.

Restricting the left half of diagram \eqref{conecompare1} to $ \{ 0 \}{\times}Y  \subset  \Delta_1^1 {\times} Y$ gives diagram
\eqref{cone1}:
\begin{equation} \label{cone1var}
  \xy \UCMT \xymatrix@R=1ex{
  &   \Delta^1_1\esm Y \ar[dd]_{\eta_1}  &   
\\
 \{ 0 \} {\times} Y  \equiv Y   \ar[ur]^{i_0} \ar[dr]_{i} &   &  
\\
  & cY   & 
}
\endxy
\end{equation}
Replace $S^0 = \{ *, *' \}$ with basepoint $*$ in example \ref{esmaction}  by $\partial\Delta_1^1$ with basepoint~$1$,
and  obtain the diagram
\begin{equation} \label{suspension1}
  \xy \UCMT \xymatrix{
X   \ar[d]_{=}  
           & \ar[l]_{r}  \partial\Delta_1^1  \esm Y   \ar@{ >->}[r] \ar[d]_{\iso}       
                    &  \Delta^1_1 \esm Y  \ar[d]^{\iso}
\\
X  &  \ar[l]_{r}   Y    \ar@{ >->}[r]^{i} \ar@{ >->}[ur]^{i_0}  &   cY.
}
\endxy
\end{equation}
Passage to pushouts shows that the quotient  $(\Delta^1_1 {\esm Y})/(\partial \Delta_1^1 {\esm} Y)$ is isomorphic to $\Sigma Y$ 
in $\cR_f(X)$.  
According to proposition \ref{externalbiexactness} the functor $- \esm Y \colon \cR_f(*) \ra \cR_f(X{\times}\{*\}) \cong \cR_f(X)$
preserves colimits so we deduce 
\begin{equation*}
  (  \Delta^1_1{\esm} Y )/(\partial \Delta_1^1{\esm}Y ) \iso   (\Delta_1^1 / \partial \Delta_1^1) \esm Y  \equiv  S^1  \esm Y,
\end{equation*}
where we define $S^1 \mathrel{\mathop :}=  \Delta_1^1 / \partial \Delta_1^1 $ in $\cR_f(*)$.
\end{proof}
According to proposition \ref{internalizing}, the action of $\cR_f(*)$ on $\cR_f(X)$ may be made internal. Explicitly, there is
a natural isomorphism $i_{e*}S^1 \sma Y \iso S^1 \esm Y$.  In the following we abuse notation slightly and write simply $S^1 \sma Y$
leaving $i_{e*}$ understood, where $i_e \colon \{*\} \ra X$ is the inclusion of the one point space as the identity element of $X$.
We do this to emphasize the dependence of the rest of this section on the coherence of the operation $\sma$.
\begin{proposition}[Compare \cite{GSAthyops},6.1 Proposition,p.283]  \label{lont}
The following diagram commutes up to natural isomorphism.
\begin{equation*}
  \xy \UCMT \xymatrix{
w \sub_k G_{\bullet}\cR_f(X) \ar[r]^{\omega^k} \ar[d]_{S^1 \sma}    
            &    wG^k_{\bullet}\cR_f(X, \Sigma_k, \{{\rm all}\}) \ar[d]_{\dop_k S^1 \sma}
\\
w \sub_k G_{\bullet}\cR_f(X) \ar[r]^{\omega^k}    
                         &     wG^k_{\bullet}\cR_f(X, \Sigma_k, \{{\rm all}\}) }
\endxy
\end{equation*}
\end{proposition}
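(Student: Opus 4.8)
The plan is to peel off the purely combinatorial layers of $\omega^k$ and reduce everything to a single pointwise natural isomorphism, which I then recognize as the composite of a shuffle identity for the diamonds and a projection (push-pull) formula for the induction inside $\btn$. Recall from definition \ref{omegafunctors} that $\omega^k = \Xi_k^* \com \sub_k\Lambda^k$, so that for $M \in \Exact(\Gamma(A), \cR_f(X))$ and $\beta \in \Gamma^k(A)$ the functor $\omega^k(M)$ is recorded by the values $\Lambda^kM(\beta)$; both $\sub_k$ and the pullback along $\Xi_k$ are reindexings of the simplicial and poset variables that leave the underlying retractive spaces untouched. The left vertical map post-composes with the suspension $S^1\sma - = \Sigma$ of lemma \ref{smashequalscone}, hence replaces each value $M(i_r/\ell_r)$ by $S^1 \sma M(i_r/\ell_r)$, while the right vertical map post-composes with the endofunctor $\dop_k(S^1)\sma -$ of $\cR(X,\Sigma_k,\{{\rm all}\})$, where $\dop_k(S^1)=(S^1)^{\sma k}$ is the $\Sigma_k$-sphere of example \ref{keyex1diamond} and the smash carries the diagonal action; this endofunctor is well defined and exact because it is the equivariant extension of the biexact product $\sma$ of theorem \ref{symmetricbimonoid}. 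It therefore suffices to produce, naturally in $\beta$ and $M$, an isomorphism $\Lambda^k(S^1\sma M)(\beta) \iso \dop_k(S^1)\sma \Lambda^kM(\beta)$.

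First I would unwind $\Lambda^kM(\beta)$ using definition \ref{lambdafunctors}: since $\dop$ takes precedence over $\btn$, the maximal runs of consecutive $\dop$'s partition $\{1,\dots,k\}$ into blocks $B_1,\dots,B_m$ of sizes $b_1,\dots,b_m$ with $\sum b_i = k$, and property (A3) of definition \ref{Gammacategories} guarantees that the values indexed by a block form a filtered object $M|_{B_i}$, so that $\Lambda^kM(\beta) = \bigl(\dop_{b_1}M|_{B_1}\bigr)\btn\cdots\btn\bigl(\dop_{b_m}M|_{B_m}\bigr)$. Post-composition with $\Sigma$ respects this grouping, so $\Lambda^k(S^1\sma M)(\beta)$ is the same expression formed from the suspended values. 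For each diamond block I would use a shuffle isomorphism: on a constant filtration example \ref{keyex1diamond} gives $\dop_b(S^1\sma \underline N) = (S^1\sma N)^{\sma b}$, and the coherence isomorphisms of the symmetric monoidal structure on $\cR_f(X)$ (theorem \ref{symmetricbimonoid}, part one) rearrange this as $(S^1)^{\sma b}\sma N^{\sma b} = \dop_b(S^1)\sma \dop_b(\underline N)$ with diagonal $\Sigma_b$-action. Because $\dop_b$ is a colimit over $I(\fset{b})$ of such smash powers (definition \ref{basicdiamond}) and $\Sigma = S^1\sma -$ preserves the relevant pushouts and colimits by proposition \ref{externalbiexactness}, this shuffle is natural in the filtered argument and yields $\dop_{b_i}(S^1\sma M|_{B_i}) \iso \dop_{b_i}(S^1)\sma \dop_{b_i}M|_{B_i}$ for every $i$.

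It then remains to pull the sphere coordinates out of the box-tensor. Here $\btn$ is induction from the Young subgroup $H = \Sigma_{b_1}{\times}\cdots{\times}\Sigma_{b_m}$ applied to the smash of the blocks (definition \ref{boxtensor}), and a further shuffle identifies $\dop_{b_1}(S^1)\sma\cdots\sma\dop_{b_m}(S^1)$ with the restriction $V|_H$ of $V=\dop_k(S^1)=(S^1)^{\sma k}$. The identity I need is the projection formula $\Ind_H^{\Sigma_k}\bigl(V|_H \sma W\bigr) \iso V \sma \Ind_H^{\Sigma_k}(W)$, applied with $W = \dop_{b_1}M|_{B_1}\sma\cdots\sma\dop_{b_m}M|_{B_m}$; at the level of retractive simplicial sets this is read off from the defining construction \eqref{presmallinduction}, which exhibits $\Ind_H^{\Sigma_k}(-)$ as $\Iso(\fset{k},\cdots)_+\esm^{H}(-)$, by commuting the fixed sphere coordinate across the quotient by $H$. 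Chaining the block-shuffles of the previous paragraph with this projection formula produces the desired $\Lambda^k(S^1\sma M)(\beta) \iso \dop_k(S^1)\sma \Lambda^kM(\beta)$.

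The hard part is not any single one of these isomorphisms but the upgrade of the pointwise comparison to an isomorphism of the two composite functors in the square: one must check that the shuffle and projection isomorphisms are natural in $\beta$ with respect to every arrow of $\Gamma^k(A)$, compatible with the exact sequences of definition \ref{Gammacategories}, with weak equivalences, and with the simplicial structure carried by $\sub_k$ and the $G_\bullet$-construction. The arrows and exact sequences of $\Gamma^k(A)$ encode precisely the relations (E1) through (E5) among $\dop$ and $\btn$ recorded in proposition \ref{input1}, and the comparison isomorphisms must be verified to intertwine with them; since all of these constructions are assembled from the coherent symmetric bimonoidal structure of theorem \ref{symmetricbimonoid}, it is exactly this coherence that makes the verification routine rather than merely plausible. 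This is the formal content of \cite[6.1 Proposition]{GSAthyops}, whose argument transfers verbatim once the coherence inputs of section \ref{General} and properties (E1)--(E5) are in place, so I would finish by appealing to that result.
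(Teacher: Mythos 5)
Your argument follows essentially the same route as the paper's proof: both evaluate $\omega^k$ at a typical object of $\Gamma^k(A)$, decompose the value into diamond blocks assembled by a box-tensor induction $\Ind_{\Sigma_{n_1}\times\cdots\times\Sigma_{n_k}}^{\Sigma_n}(Z_{n_1}\sma\cdots\sma Z_{n_k})$, shuffle the sphere factors outward using the coherence isomorphisms of theorem \ref{symmetricbimonoid}, and commute them across the induction via the projection formula, which is exactly lemma \ref{downthenup} as packaged in proposition \ref{lontinput}. The only difference is cosmetic: you make explicit the naturality checks in $\beta$ and the compatibility with properties (E1)--(E5) that the paper leaves implicit in its concluding appeal to coherence and to \cite{GSAthyops}.
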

\begin{proof}
Write $F_1$ for the composite functor $\omega^k \com (S^1 \sma - )$ and $F_2$ for the composite $\dop_k S^1 \sma \omega^k(-)$.
Although $\omega^k(S^1) = \dop_k S^1 = S^1 \sma \stackrel{\text{$k$ terms}}{\cdots} \sma S^1$ we use the $\dop_k$-notation
for orientation purposes.
Following \cite[p.297]{GSAthyops} and \cite[p.257]{GraysonOps}, 
given a functor $M \colon \Gamma(A_1\cdots A_k) \ra \cR_f(X)$ representing an object of
$\sub_kG_{\bullet}\cR_f(X)$, the value of $\omega^k(M)$ on a typical element
% \begin{equation*}
%   (i_1/j_1, *_2, \cdots, *_{n_1}, i_{n_1}/j_1, *_{n_1+1}, i_{n_1+1}/j_{n_1+1}, *_{n_1+1},
%                 \cdots, 
%                *_{n_1+ \cdots + n_{k}}, i_{n_1+ \cdots + n_{k}}/j_{n+1+ \cdots + n_{k -1}+1})
% \end{equation*}
 of $\Gamma^k(A_1\cdots A_k)$ has the form
\begin{equation*}
 \bigl( \dop_{n_1} M(-) \bigr)  \btn \bigl( \dop_{n_2} M(-) \bigr) \btn \cdots \btn \bigl( \dop_{n_{k}} M(-)\bigr) = Z_{n_1} \btn \cdots \btn Z_{n_k},
\end{equation*}
where $Z_{n_i} \mathrel{\mathop :}= \dop_{n_i} M(-)$ is an object of $\cR_f(X, \Sigma_{n_i}, \{{\rm all}\})$.
Extending the formulas in the argument of theorem \ref{input1} for the associativity of $\btn$, we write
\begin{equation*}
  Z_{n_1} \btn \cdots \btn Z_{n_k} = \Ind^{\Sigma_{n_1+ \cdots + n_k}}_{\Sigma_{n_1} \times \cdots \times \Sigma_{n_k}}( Z_{n_1} \sma \cdots \sma Z_{n_k} ),
\end{equation*}
and set $n=n_1{+} \cdots {+} n_k$. 

% Abbreviating $F_1(n \leq n{+}r) = (S^1 \sma \stackrel{\text{ $r$ terms}}{\cdots} \sma S^1) \sma - = S^r \sma -$,
% Then the value of $\epsilon(n{+}r)\com F_1(n \leq n{+}r) = \omega^k(S^r \sma - )$ on the same typical element $M$ can be written
Then a typical  value of $F_1(M)$  has the form
\begin{multline*}
  \Ind^{\Sigma_n}_{\Sigma_{n_1} \times \cdots \times \Sigma_{n_k}}\bigl( (S^1\sma \stackrel{\text{$n_1$ terms}}{\cdots} \sma S^1 \sma Z_{n_1}) 
          \sma \cdots \sma (S^1 \sma \stackrel{\text{$n_k$ terms}}{\cdots} \sma S^1 \sma Z_{n_k}) \bigr)
\\
\iso  \Ind^{\Sigma_{n_1+ \cdots + n_k}}_{\Sigma_{n_1} \times \cdots \times \Sigma_{n_k}}
\bigl( (S^1\sma \stackrel{\text{$n_1$ terms}}{\cdots} \sma S^1) 
      \sma \stackrel{\text{$k$ groups}}{\cdots} \sma (S^1 \sma \stackrel{\text{$n_k$ terms}}{\cdots} \sma S^1 ) \bigr) \sma( Z_{n_1} \sma \cdots \sma Z_{n_k}) \bigr),
\end{multline*}
applying commutativity and associativity isomorphisms.  Now proposition \ref{lontinput} applies to deliver an isomorphism of 
$\Sigma_{n_1+ \cdots + n_k}$-spaces.
\begin{multline*}
  \Ind^{\Sigma_{n}}_{\Sigma_{n_1} \times \cdots \times \Sigma_{n_k}}
\bigl( (S^1\sma \stackrel{\text{$n_1$ terms}}{\cdots} \sma S^1) 
      \sma \stackrel{\text{$k$ groups}}{\cdots} \sma (S^1 \sma \stackrel{\text{$n_k$ terms}}{\cdots} \sma S^1 ) \bigr) \sma( Z_{n_1} \sma \cdots \sma Z_{n_k}) \bigr),
\\
\stackrel{\iso}{\ra}
 ({\dop_k}S^1) \sma \stackrel{\text{$n$ terms}}{\cdots} \sma ({\dop_k}S^1) \sma
\bigl(\Ind^{\Sigma_n}_{\Sigma_{n_1}{\times}\cdots{\times}\Sigma_{n_k}} ( Z_{n_1} \sma \cdots \sma Z_{n_k})  \bigr).
\end{multline*}
This final expression is the value of $F_2$ on the same typical element $M$, so we have a natural isomorphism of functors
$ \epsilon \colon F_1 \Rightarrow F_2$.
\end{proof} 
Now we prove the general lemma \ref{downthenup} and its specialization proposition \ref{lontinput}.
\begin{lemma}  \label{downthenup}
  Let $H$ be a subgroup of $G$, let $ Y \in \cR(X,G)$, and let $Z \in \cR(X, H)$.  By restricting the $G$-action
on $Y$ to $H$, we obtain $Y \sma Z \in \cR(X, H)$, where the action is diagonal.  Then there is a natural isomorphism of
left $G$-spaces
\begin{equation*}
 G_+ \esm^H   (Y \sma Z)  \stackrel{\iso}{\lra} Y \sma (  G_+ \esm^H Z )
\end{equation*}
where the $G$-action on the righthand space is diagonal.  
\end{lemma}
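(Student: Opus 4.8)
The plan is to recognize Lemma \ref{downthenup} as the projection (Frobenius reciprocity) formula $\Ind_H^G(Y \sma Z) \iso Y \sma \Ind_H^G Z$, where the functor $G_+ \esm^H(-)$ is exactly the induction construction of definition \ref{boxtensor} with $\Iso(\ldots)_+$ replaced by $G_+$. Since $\esm$, the twisted quotient $\esm^H$, and $\sma$ are all applied dimensionwise to discrete sets, it suffices to produce a natural isomorphism of $G$-simplicial sets over $X$, which in turn reduces to a natural isomorphism of $G$-sets in each simplicial degree, automatically compatible with faces and degeneracies because the defining formulas are functorial. So first I would set up concrete dimensionwise descriptions of the two sides.

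Following example \ref{esmaction} and the discussion preceding definition \ref{boxtensor}, the simplices of $G_+ \esm^H W$ lying outside $X$ form $G \times_H (W{-}X)$, with $G$ acting by left translation on the $G$-factor and retraction $[g,w] \mapsto r_W(w)$, well defined because $G$ acts trivially on $X$. From the proof of proposition \ref{externalbiexactness} the simplices of $A \sma B$ outside $X$ are $(A{-}X){\times}(B{-}X)$, with retraction $\mu(r_A, r_B)$ and, when a group acts, the diagonal action. Hence outside $X$ the left-hand side is $G \times_H \bigl((Y{-}X){\times}(Z{-}X)\bigr)$ and the right-hand side is $(Y{-}X) \times \bigl(G \times_H (Z{-}X)\bigr)$. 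I would then define the candidate isomorphism, the identity on $X$ and on representatives given by
\begin{equation*}
  [g,(y,z)] \longmapsto (g{\cdot}y,\, [g,z]), \qquad \text{with proposed inverse}\qquad (y,[g,z]) \longmapsto [g,(g^{-1}y,\, z)].
\end{equation*}

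The remaining verifications are all dimensionwise statements about sets. For well-definedness of the forward map, for $h \in H$ one has $[gh,(y,z)] = [g, h{\cdot}(y,z)] = [g,(hy,hz)]$ because the $H$-action on $Y \sma Z$ is \emph{diagonal}, and both representatives map to $(ghy,[gh,z]) = (ghy,[g,hz])$ using the induction relation in $G \times_H (Z{-}X)$; the inverse is checked identically, the twist $g^{-1}$ being exactly what is needed to absorb an $h \in H$. The two maps are mutually inverse by direct substitution, and $G$-equivariance is immediate from the formulas. Compatibility with the retractions holds because $r_Y(g y) = r_Y(y)$ (as $G$ acts trivially on $X$), so both sides retract to $\mu(r_Y(y), r_Z(z))$; the sections are preserved since each map is the identity on $X$. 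Naturality in $Y$ and $Z$, and hence simplicial compatibility, follows because the formulas are functorial in simplicial maps.

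The main obstacle is the only genuinely delicate point: the interaction of the diagonal $H$-action on $Y \sma Z$ with the induction relation. The $Y$-coordinate is carried diagonally \emph{inside} the quotient on the left but sits \emph{outside} the quotient on the right, which is precisely what forces the twist by $g^{-1}$ and what requires $G$ to act trivially on $X$ so that the retraction descends to the quotient. Once this is handled, the statement is the classical set-level projection formula, transported through the coherent symmetric monoidal structure of $\sma$ from theorem \ref{symmetricbimonoid} together with the identification of $\esm$ with $\sma$ from proposition \ref{internalizing}.
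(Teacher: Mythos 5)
Your proposal is correct and follows essentially the same route as the paper: you define the very same pair of maps $[g,(y,z)] \mapsto (gy,[g,z])$ and $(y,[g,z]) \mapsto [g,(g^{-1}y,z)]$, verify that each is balanced with respect to the diagonal $H$-action exactly as the paper does, and conclude they are mutually inverse $G$-equivariant maps. Your added framing as a set-level Frobenius reciprocity checked dimensionwise, with the retraction descending because $G$ acts trivially on $X$, is a harmless elaboration of the same argument.
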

\begin{proof}
  First define a $G$-map $f \colon  G_+ \esm  (Y \sma Z) \ra Y \sma (  G_+ \esm^H Z )  $ by the formula
  \begin{equation*}
    f\bigl(g,  (y,z) \bigr) = \bigl(gy, [g,z]  \bigr).
  \end{equation*}
Applying the equivalence relation defining  $Y \sma  (G_+ \esm^H Z) $,
\begin{equation*}
f\bigl(g, (hy, hz) \bigr) =  \bigl(g(hy), [g, hz]  \bigr) =  \bigl( (gh)y, [gh,z] \bigr) = f \bigl( gh, (y,z) \bigr).  
\end{equation*}
 Therefore, there is an induced $G$-map
\begin{equation*}
  f' \colon  G_+ \esm^H   (Y \sma Z)  \lra  Y \sma (  G_+ \esm^H Z ) .
\end{equation*}
To reverse this map, define $F \colon   Y \sma (G_+ \esm Z)  \ra G_+ \esm^H   (Y \sma Z) $
by the formula
\begin{equation*}
  F\bigl(y,  [  g, z ] \bigr) = \bigl[ g, (g^{-1}y, z)  \bigr] .
\end{equation*}
Now
\begin{multline*}
 F\bigl(y, [gh, z] \bigr) = \bigl[ gh, ( h^{-1}g^{-1}y, z )  \bigr] \\ = \bigl[ g, ( hh^{-1}g^{-1}y, hz)  \bigr] 
   = \bigl[ g, ( g^{-1}y, hz)   \bigr] = F\bigl(y,  [g, hz] \bigr),  
\end{multline*}
so there is an induced $G$-map
\begin{equation*}
  F' \colon Y \sma (  G_+ \esm^H Z)  \ra G_+ \esm^H   (Y \sma Z)
\end{equation*}
Clearly the composites $f'F'$ and $F'f'$ are the respective identities.
\end{proof}
\begin{proposition}  \label{lontinput}
 Let $n = n_1+ \cdots n_k$.
 Let  $Z \in \cR(X, \Sigma_{n_1}{\times}\cdots{\times}\Sigma_{n_k}, \{{\rm all}\})$.  
There is a natural isomorphism of $\Sigma_n$-spaces
\begin{multline*}
\Iso(\fset{n}, \fset{n_1} \cup \cdots \cup \fset{n_k})_+ \esm^{\Sigma_{n_1}{\times}\cdots{\times}\Sigma_{n_k}}
     \bigl(   (S{\dop}\stackrel{\text{$n_1$ terms}}{\cdots}{\dop}S)\sma \cdots
      \sma (S{\dop}\stackrel{\text{$n_k$ terms}}{\cdots}{\dop}S) \sma Z \bigr)
 \\
\stackrel{\iso}{\lra}
 (S{\dop}\stackrel{\text{$n$ terms}}{\cdots}{\dop}S) \sma
\bigl(\Iso(\fset{n}, \fset{n_1} \cup \cdots \cup \fset{n_k})_+ \esm^{\Sigma_{n_1}{\times}\cdots{\times}\Sigma_{n_k}}  Z \bigr).
\end{multline*}
\end{proposition}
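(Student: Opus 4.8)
The plan is to exhibit this proposition as the specialization of lemma \ref{downthenup} to the group $G = \Sigma_n$ and its block subgroup $H = \Sigma_{n_1}\times\cdots\times\Sigma_{n_k}$, applied to the $G$-space $Y = S\dop\cdots\dop S$ ($n$ copies). By example \ref{keyex1diamond} this $Y$ is the smash power $\bigl(\sma\bigr)_n S$ with $\Sigma_n$ permuting the factors. First I would record the restriction identity: as an $H$-space, $Y$ is, by the coherent associativity of $\sma$ from theorem \ref{symmetricbimonoid}, naturally isomorphic to the block product $(S\dop\cdots\dop S)\sma\cdots\sma(S\dop\cdots\dop S)$ carrying $n_i$ copies of $S$ in its $i$th block, with $\Sigma_{n_i}$ permuting within that block. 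Under this identification, and with the diagonal $H$-action, the left-hand domain of the proposition is exactly $\Iso(\fset{n},\fset{n_1}\cup\cdots\cup\fset{n_k})_+ \esm^H (Y\sma Z)$.

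Next I would reconcile the two formalisms for induction: lemma \ref{downthenup} is phrased with the literal group $G_+$, while the inducing object here is the isomorphism torsor $\Iso(\fset{n},\fset{n_1}\cup\cdots\cup\fset{n_k})_+$, a $(\Sigma_n,H)$-bitorsor for the actions $\sigma\com f = f\com\sigma^{-1}$ and $f\cdot(\sigma_1,\ldots,\sigma_k) = (\sigma_1^{-1},\ldots,\sigma_k^{-1})\com f$ used in \eqref{smallinduction2}. Fixing the standard order-preserving isomorphism $f_0\colon\fset{n}\to\fset{n_1}\cup\cdots\cup\fset{n_k}$ identifies this torsor with $\Sigma_n$ compatibly with both actions, so that $\Iso(\dots)_+\esm^H(-)$ becomes $(\Sigma_n)_+\esm^H(-)$ and lemma \ref{downthenup} applies directly; the resulting maps are the torsor versions $[f,(y,z)]\mapsto(f\!\cdot\! y,[f,z])$ and $(y,[f,z])\mapsto[f,(f^{-1}\!\cdot\! y,z)]$ of the maps constructed there, where $f\cdot y$ denotes the relabeling of the symmetric smash power $\bigl(\sma\bigr)_n S$ induced by $f$.

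Granting these two points, the proposition is immediate. Lemma \ref{downthenup} furnishes a natural isomorphism of $\Sigma_n$-spaces
\begin{equation*}
\Iso(\fset{n},\fset{n_1}\cup\cdots\cup\fset{n_k})_+ \esm^{H} (Y\sma Z)
  \stackrel{\iso}{\lra}
  Y\sma\bigl(\Iso(\fset{n},\fset{n_1}\cup\cdots\cup\fset{n_k})_+ \esm^{H} Z\bigr),
\end{equation*}
and rewriting $Y$ on the right as $S\dop\cdots\dop S$ ($n$ copies) and $Y\sma Z$ on the left as the block product smashed with $Z$ recovers precisely the asserted map.

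I expect the main obstacle to be bookkeeping rather than conceptual content. One must verify that the associativity identification of $\mathrm{Res}^{G}_{H}\bigl(\sma\bigr)_n S$ with the block product is natural in $Z$ and compatible with the diagonal $H$-action on $Y\sma Z$, and that the relabeling $f\cdot y$ is well defined and interacts correctly with both the left $\Sigma_n$-action and the right $H$-action, so that the bitorsor form of lemma \ref{downthenup} is legitimate. Each of these reduces to the coherence of $\sma$ established in theorem \ref{symmetricbimonoid}, so no input beyond careful tracking of the symmetric-monoidal coherence data is required.
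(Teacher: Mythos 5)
Your proposal is correct and follows essentially the same route as the paper's proof: the paper likewise obtains the isomorphism by applying lemma \ref{downthenup} with $G=\Sigma_n$ and $H=\Sigma_{n_1}\times\cdots\times\Sigma_{n_k}$, using the coherent associativity of $\sma$ to collect parentheses in the block product $(S\dop\cdots\dop S)\sma\cdots\sma(S\dop\cdots\dop S)$ to the left, and fixing the standard identification of $\fset{n_1}\cup\cdots\cup\fset{n_k}$ with $\{1,\ldots,n\}$ so that $\Iso(\fset{n},\fset{n_1}\cup\cdots\cup\fset{n_k})$ is identified with $\Sigma_n$ and the lemma's map $f'$ makes sense. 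Your torsor-based bookkeeping is a slightly more explicit account of the same identification (the paper also notes that only the map $f'$, not its inverse, is needed), so there is no substantive difference.
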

\begin{proof}
 Apply lemma \ref{downthenup}, and observe that the operation $\dop$ is defined in terms of $\sma$, which is coherently associative.  
Collect all parentheses in expressions
$(S{\dop}\stackrel{\text{$n_1$ terms}}{\cdots}{\dop}S)\sma \cdots \sma (S{\dop}\stackrel{\text{$n_k$ terms}}{\cdots}{\dop}S)$ 
to the left. 
Note that we need only the map $f' \colon G_+ \esm^H   (Y \sma Z)  \lra Y \sma (  G_+ \esm^H Z )$ from the lemma. 
We do require the choice of an identification of $\Iso(\fset{n}, \fset{n_1} \cup \cdots \cup \fset{n_k})$ 
with $\Sigma_n$ to make sense of $f'$. This amounts to identifying 
$\fset{n_1} \cup \cdots \cup \fset{n_k}$ with $\{1 , \ldots, n_1, n_1{+}1, \ldots, n_1{+}n_2, \ldots, n_1+\ldots +n_k\}$.
\end{proof}
We use the Thomason homotopy colimit construction on functors defined on the category $F$ to pass to the limit with suspensions.
To treat suspension by $S^1$ on $\sub_k w G_{\bullet}\cR_f(X)$
% and $wS_{\bullet}\cR_f(X)$ 
define an op-lax functor $\Phi_1 \colon F \ra Cat^{\Delta^{\rm op}}$ as follows.
\begin{align*}
  \Phi_1(x) &= \sub_k w G_{\bullet}\cR_f(X), \quad \text{for an object $x \in F$,}
\\
  \Phi_1(\sigma)   &= \id, \quad \text{for an isomorphism $\sigma \colon x \ra x$,}
\\
  \Phi_1(i \colon y \ra x) &\quad \text{is induced by suspension by $x \backslash i(y)$ factors $S^1$.}
\end{align*}
Interpreting the smash product with an empty number of factors as $S^0$, the definitions coincide on isomorphisms.
For $x \stackrel{i}{\la} y \stackrel{j}{\la} z$ we need to  produce the natural transformation 
$\Phi_1(ij) \Rightarrow \Phi_1(i)\com \Phi_1(j)$. 
 On $(Y,r,s)$ the value of $\Phi_1(j)$  is $\bigl( (S^1)^{y \backslash j(z)}\esm Y, r', s'\bigr)$ and the value of $\Phi_1(i)$ 
applied to this is $\bigl((S^1)^{x \backslash i(y)} \esm ((S^1)^{y \backslash j(z)}\esm Y), r'', s''\bigr)$.  Since $i$ is injective,
the set $y \backslash j(z)$ is identified with $i(y \backslash j(z))$.  
Since $x \backslash ij(z) = x \backslash i(y) \cup i(y \backslash j(z))$ we use associativity isomorphisms of the $\esm$-action to write 
$\Phi_1(i \com j) \stackrel{\iso}{\Rightarrow} \Phi_1(i)\com \Phi_1(j)$. 
The coherence properties of the action imply commutativity of the necessary diagrams \cite[Definition 3.1.1, p.99]{Thomasonhocolim}.

In a similar way we treat $\dop_k S^1 \sma -$ on $ wG^k_{\bullet}\cR_f(X, \Sigma_n \{{\rm all}\})$, defining an op-lax functor
$\Phi_2 \colon  F \ra Cat^{\Delta^{\rm op}}$.
\begin{align*}
  \Phi_2(x) &=  w G^k_{\bullet}\cR_f(X, \Sigma_n \{{\rm all}\}), \quad \text{for an object $x \in F$,}
\\
  \Phi_2(\sigma)   &= \id, \quad \text{for an isomorphism $\sigma \colon x \ra x$,}
\\
  \Phi_2(i \colon y \ra x) &\quad \text{is induced by suspension by $x \backslash i(y)$ factors $\dop_k S^1$.}
\end{align*}
The natural transformation $\Phi_2(i \com j) \stackrel{\iso}{\Rightarrow} \Phi_2(i)\com \Phi_2(j)$ is treated in the same manner.

The results are two categories
\begin{equation*}
  \hocolim_{S^1 \sma - } \sub_k w G_{\bullet}\cR_f(X) \mathrel{\mathop :}= F \gro \Phi_1 \quad \text{and} \quad 
   \hocolim_{\dop_k S^1 \sma - }  w G^k_{\bullet}\cR_f(X) \mathrel{\mathop :}= F \gro \Phi_2.
\end{equation*}
\begin{remark}
  There are a number of constructions in \cite{Thomasonhocolim} that may justifiably termed homotopy colimits.  
This particular construction $F \gro \Phi_i$ is essential, but we use the $\hocolim$ notation to provide context
for the reader.
\end{remark}
Now we explain how proposition  \ref{lont} promotes 
$\omega^k \colon \sub_k w G_{\bullet}\cR_f(X) \ra  w G^k_{\bullet}\cR_f(X)$
to a left-op natural transformation (lont) $\epsilon \colon \Phi_1 \Rightarrow \Phi_2$.
First, we need for an object $x$ of $F$, a functor $\epsilon(x) \colon \Phi_1(x) \ra \Phi_2(x)$. 
This is just $\omega^k$. Then we need for each arrow 
$i \colon y \ra x$ in $F$ a natural tranformation 
$\epsilon(i) \colon \epsilon(x) \com \Phi_1(i) \Rightarrow \Phi_2(i) \com \epsilon(y)$.
For any morphism $i$ such that $x\backslash i(y)$ has cardinality 1, we obtain $\epsilon(i)$ by inverting 
the isomorphism of functors provided by proposition \ref{lont}. 
For the general case, one just goes back to the proof and replaces the symbol $1$ by $x\backslash i(y)$ everywhere it occurs.
The coherence results of section \ref{General} guarantee that the necessary diagrams commute, so $\epsilon$ is a lont. 
By \cite[Definition 3.1.4, p.101]{Thomasonhocolim} $\epsilon$ induces a functor
\begin{equation*}
  F \gro \epsilon \colon F \gro \Phi_1 \ra F \gro \Phi_2.
\end{equation*}
We have now proved the following result. 
\begin{theorem} \label{stableoperations}
  The operations $\omega^k$ pass through the Thomason homotopy colimit construction to deliver operations
  \begin{equation*}
    F \gro \epsilon \mathrel{\mathop :}= \omega^k \colon \hocolim_{S^1 \sma - } \sub_k w G_{\bullet}\cR_f(X) 
                  \ra \hocolim_{\dop_k S^1 \sma - }  w G^k_{\bullet}\cR_f(X) \qed
  \end{equation*}
\end{theorem}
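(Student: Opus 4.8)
The plan is to repackage the commutativity established in proposition \ref{lont} as a left-op natural transformation (lont) $\epsilon \colon \Phi_1 \Rightarrow \Phi_2$ in the sense of \cite[Definition 3.1.1, p.99]{Thomasonhocolim}, and then to invoke the functoriality of the Grothendieck construction on lonts \cite[Definition 3.1.4, p.101]{Thomasonhocolim} to manufacture the induced functor $F \gro \epsilon$. Since the Grothendieck constructions $F \gro \Phi_1$ and $F \gro \Phi_2$ are by definition the two homotopy colimits appearing in the statement, this functor is the desired operation.

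To build $\epsilon$, first I would record its value on objects: because $\Phi_1(x)$ and $\Phi_2(x)$ are independent of $x$, I set $\epsilon(x) = \omega^k \colon \sub_k w G_{\bullet}\cR_f(X) \ra w G^k_{\bullet}\cR_f(X)$ for every object $x$ of $F$. Next, for each arrow $i \colon y \ra x$ I must supply a natural transformation $\epsilon(i) \colon \epsilon(x) \com \Phi_1(i) \Rightarrow \Phi_2(i) \com \epsilon(y)$. When the complement $x \backslash i(y)$ is a single point, $\Phi_1(i)$ is suspension by one factor $S^1$ and $\Phi_2(i)$ is suspension by $\dop_k S^1$, so $\epsilon(i)$ is precisely the inverse of the isomorphism $\epsilon \colon F_1 \Rightarrow F_2$ delivered by proposition \ref{lont}. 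For a general injection I would run the proof of proposition \ref{lont} again, replacing the single symbol $S^1$ throughout by the family of factors indexed by $x \backslash i(y)$; lemma \ref{downthenup} and proposition \ref{lontinput} apply without change, as the subgroup $\Sigma_{n_1} \times \cdots \times \Sigma_{n_k}$ of $\Sigma_n$ is unaffected.

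The hard part will be verifying the coherence axioms that promote these component data to a genuine lont: for composable arrows one must check that $\epsilon(i \com j)$ agrees with the composite built from $\epsilon(i)$ and $\epsilon(j)$, compatibly with the structural isomorphisms $\Phi_1(i \com j) \Rightarrow \Phi_1(i) \com \Phi_1(j)$ and $\Phi_2(i \com j) \Rightarrow \Phi_2(i) \com \Phi_2(j)$ that were already produced from associativity of the $\esm$-action. My strategy here is to observe that every transformation in sight is assembled from the associativity and commutativity isomorphisms of the symmetric bimonoidal structure on $\cR_f(X)$ furnished by theorem \ref{symmetricbimonoid}, together with the Cartan-type isomorphisms underlying $\omega^k$. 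Once the diagrams are rewritten in these terms, LaPlaza's coherence theorem \cite{LaPlazaCoh1} forces them to commute, exactly as the coherence results of section \ref{General} were designed to guarantee. With $\epsilon$ confirmed to be a lont, applying \cite[Definition 3.1.4]{Thomasonhocolim} yields $F \gro \epsilon$ and completes the proof.
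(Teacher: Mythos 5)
Your proposal is correct and follows the paper's own argument essentially verbatim: the paper likewise sets $\epsilon(x) = \omega^k$ on objects, obtains $\epsilon(i)$ for arrows with singleton complement by inverting the isomorphism of proposition \ref{lont}, handles a general injection by rerunning that proof with the symbol $1$ replaced by $x \backslash i(y)$, appeals to the coherence results of section \ref{General} (ultimately LaPlaza) to verify the lont axioms, and then applies Thomason's construction to produce $F \gro \epsilon$. There is nothing to add.
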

\begin{proof}[Proof of theorem \ref{infiniteloop}]
  The main result of section \ref{EinftyStructure} is that 
  \begin{equation*}
    \Loops\abs{hN_{\Gamma}\cR_f^n(X)} \ra \{1\} \times \prod_{n \geq 1} A_{\Sigma_n, {\rm all}}(X) 
  \end{equation*}
is an infinite loop  map, and this section shows these maps are compatible with suspension. 
Likewise for the equivalence $\Loops\abs{hN_{\Gamma}\cR_f^n(X)} \ra \Loops\abs{w S_{\bullet}\cR_f^n(X)}$.
The maps obtained by passing to the limit over suspension remain infinite loop maps and
we know $\Loops \colim \abs{w S_{\bullet}\cR_f^n(X)} \simeq \Loops\abs{w S_{\bullet}\cR_f(X)} = A(X)$.
\end{proof}
\section{Projecting to the free part} \label{Splitting}
%Revised 07 July 2017
As stated in theorem \ref{infiniteloop} the  constructions of
\cite{GSAthyops} as modified in section \ref{Suspension}
deliver a total operation
\begin{equation*}
  \omega = \prod \omega^n \colon A(X) \ra \prod_{n \geq 1} A_{\Sigma_n, \{{\rm all}\}}(X),
\end{equation*}
where
$ A_{\Sigma_n, \{{\rm all}\}}(X) = \Loops|hS_{\bullet}\cR_{hf}(X, \Sigma_n, \{{\rm all}\})|$.
We  examine the target of this map, and introduce the Weyl group notation 
 $W_{\Sigma_n}H = N_{\Sigma_n}H/H$, 
where $H$ is a subgroup of the permutation group $\Sigma_n$ and $N_{\Sigma_n}H$ is
the normalizer in $\Sigma_n$ of $H$.
\begin{theorem}  \label{splitting}
Let $X$ be a space on which symmetric groups $\Sigma_n$ act trivially. 
  For each $n$ there is a homotopy equivalence 
  \begin{equation*}
 h_n  \colon  A_{\Sigma_n, \{{\rm all}\}}(X) \lra \prod_{H \in \{{\rm all}\}} A\bigl(X{\times}BW_{\Sigma_n}H)\bigr)
  \end{equation*}
of infinite loop spaces. Here
$A\bigl(X{\times}B(W_{\Sigma_n}H)\bigr)= \Loops|hS_{\bullet}\cR_f(X, W_{\Sigma_n}H,\{e\})|$ is 
the $K$-theory of the category of retractive $W_{\Sigma_n}H$-spaces relative to $X$ with the
action being free outside of~$X$.
\end{theorem}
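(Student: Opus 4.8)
The plan is to establish this as the algebraic-$K$-theory incarnation of the tom Dieck splitting, proved by filtering $\cR_f(X, \Sigma_n, \{{\rm all}\})$ by orbit type and peeling off one conjugacy class of isotropy at a time. The two families of functors that drive the argument are the geometric fixed-point functors $\Phi^H$, which assemble into $h_n$, and induction functors $\Ind^H$, which supply the splittings.

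First I would fix a linear extension $[H_1], \ldots, [H_r]$ of the subconjugacy partial order on conjugacy classes of subgroups of $\Sigma_n$, with $[H_1] = [\{e\}]$, arranged so that each $\cF_j = \{[H_1], \ldots, [H_j]\}$ is closed under subconjugacy; then $H_j$ is a maximal element of $\cF_j$, and $\cF_r = \{{\rm all}\}$. This yields a tower of inclusions of categories with cofibrations and weak equivalences
\begin{equation*}
\cR_f(X, \Sigma_n, \cF_1) \hookrightarrow \cdots \hookrightarrow \cR_f(X, \Sigma_n, \cF_r) = \cR_f(X, \Sigma_n, \{{\rm all}\}),
\end{equation*}
whose bottom term $\cR_f(X, \Sigma_n, \{e\})$ is already identified with the model for $A(X{\times}B\Sigma_n)$ (and in general $\cR_f(X, W_{\Sigma_n}H, \{e\})$ with $A(X{\times}BW_{\Sigma_n}H)$) by the free-orbit case of proposition \ref{AXtimesBSigman}. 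For each $j$ I define the geometric fixed-point functor $\Phi^{H_j}(Y) = Y^{H_j}/Y^{>H_j}$, the $H_j$-fixed simplicial subset modulo the locus fixed by strictly larger isotropy; it carries a natural free $W_{\Sigma_n}H_j$-action over $X$ and is relatively finite. The orbit computation of $(\Sigma_n/K)^{H_j}$ shows, using maximality of $H_j$ in $\cF_j$, that $\Phi^{H_j}$ sends every cell of orbit type other than $(H_j)$ to the zero object $X$ and each $(H_j)$-cell to a free $W_{\Sigma_n}H_j$-cell. I must then check $\Phi^{H_j}$ is exact, which reduces to the corresponding dimensionwise statements for fixed points of simplicial $\Sigma_n$-sets together with the relative-finiteness bookkeeping.

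The heart of the argument is, for each $j$, a split fibration sequence of infinite loop spaces
\begin{equation*}
A_{\Sigma_n, \cF_{j-1}}(X) \lra A_{\Sigma_n, \cF_j}(X) \stackrel{\Phi^{H_j}}{\lra} A(X{\times}BW_{\Sigma_n}H_j).
\end{equation*}
I would obtain this from Waldhausen's fibration theorem applied to $\cR_f(X, \Sigma_n, \cF_j)$ equipped with the coarser class of ``$\Phi^{H_j}$-equivalences,'' whose acyclic subcategory I identify up to equivalence with $\cR_f(X, \Sigma_n, \cF_{j-1})$ by equivariant cell-trading; the cylinder and extension axioms hold because cofibrations are injective maps of simplicial sets, as in lemma \ref{extproperty}. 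To split the sequence I define $\Ind^{H_j}$ by inflating a free $W_{\Sigma_n}H_j$-space along $N_{\Sigma_n}H_j \ra W_{\Sigma_n}H_j$ and inducing up to $\Sigma_n$, producing an object with isotropy exactly $(H_j)$; the computation $\Phi^{H_j}\com \Ind^{H_j} \heq \id$ (and $\Phi^{K}\com \Ind^{H} \heq X$ for $(K)\neq(H)$) exhibits $\Ind^{H_j}$ as a section. Splitting each stage and inducting on $j$ then assembles $\prod_{[H]} A(X{\times}BW_{\Sigma_n}H)$, with $h_n = \prod_{[H]} \Phi^H$; the infinite-loop-map property is inherited since every functor in sight is exact and the equivalences of section \ref{EinftyStructure} are infinite loop maps.

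The step I expect to be the main obstacle is the identification of the localization fiber: verifying that a $\Phi^{H_j}$-acyclic object of $\cR_f(X, \Sigma_n, \cF_j)$ is genuinely equivalent, inside this relatively finite retractive category, to an object supported on the lower family $\cF_{j-1}$. This is the d\'{e}vissage input, and it is where the equivariant simplicial geometry, rather than formal coherence, does the real work; everything else is either the classical orbit-type bookkeeping or a routine check that $\Phi^H$ and $\Ind^H$ respect cofibrations, pushouts, weak equivalences, and the retraction data over $X$.
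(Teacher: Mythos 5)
Your outline shares the paper's skeleton --- a linear ordering of the conjugacy classes refining subconjugacy, strata identified with free Weyl-group categories via $Y \mapsto Y^{H}$ and $Z \mapsto Z \times^{W_{\Sigma_n}H}\Sigma_n$ --- but it diverges at the decisive step, and that is exactly where your gap sits. You propose to run the tower $\cR_f(X,\Sigma_n,\cF_{j-1}) \subset \cR_f(X,\Sigma_n,\cF_j)$ through Waldhausen's fibration theorem with the coarser class of $\Phi^{H_j}$-equivalences, which forces you to identify the subcategory of $\Phi^{H_j}$-acyclic objects with $\cR_f(X,\Sigma_n,\cF_{j-1})$. You correctly flag this ``equivariant cell-trading'' as the main obstacle, but it is a genuine gap rather than deferred bookkeeping: in the \emph{relatively finite} category, a $\Phi^{H_j}$-acyclic object need not be equivalent to one with orbit types in $\cF_{j-1}$ --- cell trading removes $(H_j)$-cells only up to a finiteness/torsion-type obstruction, so at best the inclusion induces a $K$-theory equivalence after enlarging to homotopy-finite objects and invoking cofinality and approximation arguments that your plan does not set up. Since the split fibration sequence is the heart of your argument, the proof as proposed is incomplete at its central point, and it is not clear the fiber identification can be done without a detour through $\cR_{hf}$.

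The paper avoids this issue entirely by filtering each \emph{object} rather than the category. With the enumeration $(H_0) \prec \cdots \prec (H_N)$ it sets $S_{-1} = \id$ and $S_i(Y) = \cF_{\succ (H_i)}(S_{i-1}(Y))$, the union of fixed sets of strictly larger isotropy, producing natural cofibration sequences of \emph{exact} functors $S_j(Y) \rightarrowtail S_{j-1}(Y) \twoheadrightarrow Q_j(Y)$ in which $Q_j(Y)$ has pure orbit type $(H_j)$ (your $\Phi^{H_j}$ is essentially $Q_j$ followed by $H_j$-fixed points). The additivity theorem then yields directly that the identity of $A_{\Sigma_n,\{{\rm all}\}}(X)$ agrees with the sum of the composites $i_j \com \overline{Q}_j$, so the map $\prod_j \overline{Q}_j$ to the product is a homotopy equivalence --- no localization theorem, no d\'{e}vissage, and no need for your $\Ind^{H_j}$-sections; induction enters only afterwards, to identify each stratum $\cR_f(X,\Sigma_n,\{H\}) \heq \cR_f(X,W_{\Sigma_n}H,\{e\})$, just as in your stratum analysis. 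The one technical cost of the paper's route, absent from your plan, is that additivity in the $G_{\bullet}$-model requires pseudo-additivity, arranged by passing to prespectra $\Sigma^{\infty}\cR_f$; this is also what keeps the splitting a statement about infinite loop spaces. If you wish to rescue your route you must actually prove the fiber identification (plausibly via cofinality in the homotopy-finite category), but the additivity argument is strictly easier and is the one the paper uses.
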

\begin{proof}
  The argument is largely formal, based on some well-known facts. 
Let $\cF$  be the set of conjugacy classes $(H_i)$ of
subgroups of $\Sigma_n$.   The set is a finite set partially ordered in the usual way:
$(H_i) \preceq (H_j)$ if some conjugate of $H_i$ is contained in $H_j$.    The partial ordering may
be extended to a linear ordering, or enumeration
$\{(H_0), (H_1), \ldots, (H_N)\}$, so that $(H_i) \prec (H_j)$ implies $i < j$. 
Observe that $(H_0) = \{e\}$, we may take $(H_1)$ as the class of tranpositions, and $(H_N) = \Sigma_n$. 

 For any $\Sigma_n$-space $Z$ we may define
 \begin{equation*}
   \cF_{\succ (H)}Z = \colim_{(K) \succ (H)} Z^{(K)},
 \end{equation*}
essentially the union of the fixed point sets of the conjugates of all 
the subgroups properly containing a conjugate of $H$.
$ \cF_{\succ (H)}Z $ is by  definition a $\Sigma_n$-invariant subspace of $Z$.  
If $(H_i) \prec (H_{i+1})$ in the enumeration then we may compute $ \cF_{\succ (H_{i+1})} \bigl(  \cF_{\succ (H_i}Z \bigr)$,
essentially the fixed points of conjugates  of $H_{i+1}$ inside the fixed points of $H_i$.  
On the complement $ \cF_{\succ (H_i)}Z \backslash\Bigl( \cF_{\succ (H_{i+1}} \bigl(  \cF_{\succ (H_i}Z \bigr)\Bigr)$
the group $\Sigma_n$ acts and the Weyl group $W_{\Sigma_n}H_i = N_{\Sigma_n}H_i / H_i$ acts freely.

Inductively define exact functors 
\begin{equation*}
  S_i, Q_j \colon \cR_f(X,  \Sigma_n , \{{\rm all}\}) \lra \cR_f(X, \Sigma_n , \{{\rm all}\}), \quad \text{$-1 \leq i \leq N$, $0 \leq j \leq N$}
\end{equation*}
by $S_{-1}$ is the identity functor, and for $i \geq 0$ put 
$S_i(Y) = \cF_{\succ (H_i)}(S_{i-1}(Y))$.  Then the functors 
$Q_j$ are defined by the natural cofibration sequences
\begin{equation*}
\xy \UCMT \xymatrix{
  S_j(Y)  \ar@{ >->}[r] & S_{j-1}(Y) \ar@{ ->>}[r] & Q_j(Y), }
\endxy \quad 0 \leq j \leq N.
\end{equation*}
For us, the important case will be $S_0$: Since $H_0 = \{e\}$, 
$S_0(Y)$ is going to be the union of  the fixed point sets of all the non-identity
subgroups of $G$.  Then the quotient $Q_0(Y)$ can be thought of as extracting 
the part of $Y$ on which $G$ acts freely.

Let $i_k \colon \cR_f(X,\Sigma_n, \{H_k\}) \ra \cR_f(X, \Sigma_n, \{{\rm all}\})$ 
be the inclusion. Since $Q_k(Y)$ actually lies in $\cR_f(X,\Sigma_n, \{H_j\}) $,
we may formally write $Q_k = i_k\com \overline{Q}_k$ where 
$\overline{Q}_k \colon \cR_f(X,  \Sigma_n , \{{\rm all}\}) \ra \cR_f(X,\Sigma_n, \{H_k\})$ is a retraction.
We want to make an inductive application of the additivity theorem for the $G_{\bullet}$ construction,
but this requires that the input be pseudo-additive.  
Passing to prespectra $\Sigma^{\infty}\cR_f(X)$, 
\cite{GSVWKthy} there results a splitting
\begin{equation*}
\hocolim  wG_{\bullet}\cR_f(X, \Sigma_n, \{{\rm all}\}) \ra \prod_{H \in \{{\rm all}\}} \hocolim wG_{\bullet}\cR_f(X, \Sigma_n, \{H\})
\end{equation*}
induced by the functors $\overline{Q}_k$ for $0 \leq k \leq N$.
Recalling that $W_{\Sigma_n}H = N_{\Sigma_n}H/H$ is the Weyl group of $H$,  consider the exact functor
\begin{equation*}
  \cR_f(X, \Sigma_n, \{H\}) \lra \cR_f(X, W_{\Sigma_n}H, \{e\}) \quad \text{taking $Y \mapsto Y^H$.}
\end{equation*}
The induction construction $Z \mapsto Z \times^{W_{\Sigma_n}H} \Sigma_n$ provides an exact functor going 
the other way and the composites in either order are equivalent to the identities. 
Putting these equivalences together and specializing the notation establishes a chain of homotopy equivalences
\begin{multline*}
   \hocolim wG_{\bullet}\cR_f(X, \Sigma_n, \{{\rm all}\}) \ra \prod_{H \in \{{\rm all}\}}\hocolim wG_{\bullet}\cR_f(X, \Sigma_n, \{H\})\\
                    \ra   \prod_{H \in \{{\rm all}\}}\hocolim wG_{\bullet}\cR_f(X, W_{\Sigma_n}H, \{e\})
\end{multline*}
\end{proof}
This completes the proof of theorem \ref{mainthm1}; to explain theorem \ref{gencalcintro} is the object of the next
two sections.
We are focusing on the composition
\begin{multline*}
 \theta^n \colon  A(X) \stackrel{\omega^n}{\lra} 
  A_{\Sigma_n, \{{\rm all}\}}(X) \stackrel{h_n}{\lra}
  \prod_{H \in \{{\rm all}\}} \Loops|hS_{\bullet}\cR_f(X, N_{\Sigma_n}H/H,\{e\})| 
\\
  \stackrel{p_e}{\lra}
  \Loops|hS_{\bullet}\cR_hf(X, \Sigma_n, \{e\}).
\end{multline*}
In section \ref{Transfer} we justify the interpretation
$\Loops|hS_{\bullet}\cR_hf(X, \Sigma_n, \{e\}) = A(X{\times}B\Sigma_n)$. Then we want to understand what happens when we follow 
this composition by the transfer
$\phi_n \colon A(X{\times}B\Sigma_n) \ra A(X{\times}E\Sigma_n) \simeq A(X)$.
We start by introducing notation for the composition
\begin{equation*}
  \cR_f(X) \lra \sub_n G_{\bullet} \cR_f(X) 
\\
  \stackrel{\omega^n}{\lra} G^n_{\bullet} \cR_f(X, \Sigma_n, \{{\rm all}\}) \stackrel{Q_0=S_{-1}/S_0}{\lra} G^n_{\bullet} \cR_f(X, \Sigma_n, \{e\}).
\end{equation*}
On $(Y,r,s) \in \cR_f(X)$,  the composition of the first two maps in the chain is $\tilde{\alpha}_n(Y)$
 in the notation of example \ref{importantomegavalue},
so we want to evaluate the functor $Q_0 = S_{-1}/S_0 \com \tilde{\alpha}_n$ on the object $( Y, r,s)$.
By the terminology used in the proof of theorem \ref{splitting}, 
$S_{-1}$ is the identity and $S_0$ is the union of subobjects that are fixed by some non-identity subgroup of $\Sigma_n$. 
The interpretation and transfer issues are taken up in the next section \ref{Transfer}; 
to prepare for the analysis of $\phi_n\com \theta_n$ in  section \ref{Formulas} we introduce some notation.

 The definitions of the Segal operations in 
\cite{Waldhausen82} use certain subfunctors $P^n_j$ of the smash power functor $P^n$ on pointed
sets.  We extend the considerations to define certain subfunctors of  $\esm$ and $\sma$ powers.
For $(Y,r,s) \in \cR(X)$, the set $Y^{\esm n}$ is a quotient of the cartesian product $Y^n$.
In a fixed simplicial dimension,  we view this as
the set of functions $y \colon \fset{n} \ra Y$.  The pushout construction identifies any such
function $y$ with at least one value $y_i$ in $X$ with the composite function $r\com y$. Thus, to
represent points of $Y^{\esm n}$ in a given dimension, we just need to look at functions all of whose values are
in $Y-X$ and functions all of whose values are in $X$. 
For $0 \leq j \leq n$ we define 
$\widetilde{P}^n_jY$ to be the subset of functions $y \colon i \mapsto y_i$ such that
 the cardinality of $y^{-1}(Y{-}X)$ is less than or equal to $j$, if the image of $y$ is
contained in $(Y{-}X)$. 
Said another way,
$\widetilde{P}^n_jY$ is the set of $n$-tuples where at most $j$ distinct elements of $Y-X$ are
involved.  For example,
$\widetilde{P}^n_0Y = X^n$ and $\widetilde{P}^n_1Y$ is the union of $X^n$ with 
 the diagonal of $(Y{-}X)^n$. 
Most important for us,  the subset $\widetilde{P}^n_{n-1}Y$ consists of all $n$-tuples involving no more
than $n{-}1$ distinct elements of $Y$, so that if no member of $(y_1, \ldots , y_n)$ is in $X$,
then there are at least two distinct indices $i$, $j$ with $y_i = y_j$.

When   $X$ is a connected abelian group, then we can push out along the 
iterated multiplication $X^n \ra X$, obtaining functors
$P^n_jY$ relative to $X$. In particular, 
$P^n_{n-1}Y$ is the subset of $P^nY$ consisting of points fixed by some non-trivial subgroup of $\Sigma_n$,
so not all members of an $n$-tuple can be distinct. 
Thus  $P^n_{n-1}Y = S_0\tilde{\alpha}_n(Y)$.  In terms of functions $y \colon \fset{n} \ra Y$, 
$P^n_{n-1}Y$ is the set of functions where the cardinality of $y^{-1}(Y{-}X)$ is at most $n{-}1$.
\begin{definition}  \label{thetadef}
Define
  \begin{equation*}
   \xy \UCMT \xymatrix{
   \widetilde{P}^n_{n-1}Y \ar@{ >->}[r] \ar[d]_{\esm^n  r} & \widetilde{P^n}Y \ar[d]   &
                                   & P^n_{n-1}Y \ar@{ >->}[r] \ar[d]_{r} & P^nY \ar[d]
\\
   X^n  \ar@{ >->}[r]  &   \widetilde{\theta}^nY  &
                      & X  \ar@{ >->}[r]  &   \theta^nY         }
\endxy    
  \end{equation*}
 \end{definition}
Letting $j^n \colon \cR_f(X, \Sigma_n, \{e\}) \ra G^n_{\bullet}\cR_f(X, \Sigma_n, \{e\})$ be the iterated stabilization, 
we combine the preceding observations with the definitions to immediately obtain the following proposition.  
\begin{proposition}  \label{identification}
As functors from $\cR_f(X)$ to $G^n_{\bullet}\cR_f(X, \Sigma_n, \{e\})$, 
 $Q_0 \com \tilde{\alpha}_n = j^n \com \theta^n$. \qed
\end{proposition}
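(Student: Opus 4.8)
The plan is to unwind both sides on an arbitrary object $(Y,r,s)$ of $\cR_f(X)$, check that they agree as functors on $\Gamma([0])^n$, and then invoke naturality. The two facts doing the real work are the identity $P^n_{n-1}Y = S_0\tilde{\alpha}_n(Y)$ recorded just above Definition \ref{thetadef} and the pushout square defining $\theta^n$. First I would evaluate $\tilde{\alpha}_n(Y)$ using example \ref{importantomegavalue}: it is the functor $\Gamma([0])^n \ra \cR_f(X,\Sigma_n,\{\text{all}\})$ taking the value $P^nY = Y\sma Y\sma\cdots\sma Y$ in the positions $0/L$, $0'/L'$, \ldots, $0^{(n)}/L^{(n)}$, with $\Sigma_n$ permuting the factors, and the value $X$ (the zero object) in every other position.

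Next I would apply $Q_0 = S_{-1}/S_0$ position by position. Since $S_{-1}$ is the identity functor, $Q_0$ of any object $Z$ is the quotient $Z/S_0(Z)$, where $S_0(Z)$ is the union of the fixed-point sets of the non-identity subgroups of $\Sigma_n$. In the key positions the value is $P^nY$, and the cited identity gives $S_0(P^nY) = P^n_{n-1}Y$; hence the quotient there is $P^nY/P^n_{n-1}Y$, which is exactly $\theta^nY$ by the pushout of Definition \ref{thetadef}. In every other position the value is $X$, on which $\Sigma_n$ acts trivially, so $S_0(X) = X$ and the quotient is the zero object of $\cR_f(X,\Sigma_n,\{e\})$. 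Therefore $Q_0\tilde{\alpha}_n(Y)$ is the functor carrying $\theta^nY$ on the $L$-line positions and the zero object elsewhere; this is precisely the description of the iterated stabilization $j^n$ applied to $\theta^nY$. Hence $Q_0\tilde{\alpha}_n(Y) = j^n\theta^n(Y)$ object-by-object, and naturality of all the identifications promotes this to an equality of functors.

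The one point needing genuine verification, rather than bookkeeping, is that the quotient $\theta^nY$ really lands in the free-orbit category $\cR_f(X,\Sigma_n,\{e\})$, i.e.\ that $\Sigma_n$ acts freely outside $X$ on $P^nY/P^n_{n-1}Y$. But this is immediate from the construction: $P^n_{n-1}Y$ is by definition the locus of points fixed by some non-identity subgroup, so its complement in $P^nY$ carries a free $\Sigma_n$-action, and collapsing $P^n_{n-1}Y$ onto $X$ leaves a retractive $\Sigma_n$-space with orbit type $\{e\}$ outside $X$. With this in hand, the matching of $Q_0\tilde{\alpha}_n$ with $j^n\theta^n$ is the promised immediate consequence of the definitions.
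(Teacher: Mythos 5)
Your proposal is correct and is exactly the argument the paper intends: the proposition is stated with \qed precisely because the preceding discussion already evaluates $Q_0\com\tilde{\alpha}_n$ on $(Y,r,s)$ via example \ref{importantomegavalue}, establishes $S_0\tilde{\alpha}_n(Y)=P^n_{n-1}Y$, and identifies the collapse of $P^n_{n-1}Y$ to $X$ with $\theta^nY$ from definition \ref{thetadef}, placed on the $L$-line positions as in the description of $j^n$. Your extra check that the quotient has free $\Sigma_n$-action outside $X$, hence lands in $\cR_f(X,\Sigma_n,\{e\})$, is the same point the paper makes in the proof of theorem \ref{splitting} when noting $Q_k(Y)$ lies in $\cR_f(X,\Sigma_n,\{H_k\})$.
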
 
% %
\section{Transfer constructions} \label{Transfer}
%Revised 6 July 2018
 Our immediate goals are to interpret
$\Loops| hS_{\bullet}\cR_f(X^n, \Sigma_n, \{e\}) |$  and 
$\Loops| hS_{\bullet}\cR_f(X, \Sigma_n, \{e\}) |$
in terms of the algebraic $K$-theory of topological spaces.
In this section families of subgroups play no role, so we revert to 
the less ornate notation $\Loops| hS_{\bullet}\cR_f(X,G) |$ for $\Loops| hS_{\bullet}\cR_f(X,G, \{e\}) |$,
the algebraic $K$-theory of $G$-spaces retracting to $X$, finite relative to $X$ and with $G$ acting freely
outside $X$.

There are  two steps to these goals and each step uses arguments based on 
\cite{Waldhausen85}.
We let $G$ be a finite group and let $Z$ be a $G$-space. Let $EG$ be
the canonical contractible free left $G$-space. We prefer the model
$EG_n = G^{n+1}$ with the $G$-action given by multiplication on the left in 
each factor, face maps defined by projecting away from a coordinate, and 
degeneracies defined by repeating a coordinate.  
An isomorphism of the quotient space $* \times^G EG \iso BG$ is induced by
$(g_0, \ldots, g_{i-1}, g_i, \ldots g_n) \mapsto (g_0^{-1}g_1, \ldots , g_{i-1}^{-1}g_i, \ldots ,g_{n-1}^{-1}g_n)$.

First, Lemma 2.1.3 \cite[p.381]{Waldhausen85} applies to yield the following result.  
\begin{lemma}\label{divideout}
  There is an equivalence of categories $\cR(EG {\times}^G Z) \sim \cR(EG{\times}Z, G)$. \qed
\end{lemma}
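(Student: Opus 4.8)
The plan is to realize the claimed equivalence as the standard correspondence between retractive spaces over a base and $G$-equivariant retractive spaces over the total space of a principal $G$-bundle, and then to invoke Lemma 2.1.3 of \cite{Waldhausen85}. The preceding paragraph has already supplied the essential geometric input: since $G$ acts freely on $EG$, the diagonal action on $EG{\times}Z$ is free, so the orbit map $q \colon EG{\times}Z \lra {*}{\times}^G(EG{\times}Z) = EG{\times}^G Z$ is a principal $G$-bundle of simplicial sets.

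First I would write down the two functors realizing the equivalence. In one direction, pullback along $q$ sends a retractive space $(W,r,s)$ over $EG{\times}^G Z$ to the $G$-equivariant retractive space $(q^*W, q^*r, q^*s)$ over $EG{\times}Z$, where $G$ acts on $q^*W = (EG{\times}Z)\times_{EG{\times}^G Z} W$ through the first factor. In the other direction, the $G$-quotient sends a $G$-retractive space $(Y',r',s')$ over $EG{\times}Z$ to $({*}{\times}^G Y', \overline{r'}, \overline{s'})$ over ${*}{\times}^G(EG{\times}Z)$, using the retraction and section induced on orbit spaces. One checks routinely that both functors respect the retraction--section data and send $G$-maps to maps, so they are well defined on the categories $\cR$.

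Next I would verify that the two composites are naturally isomorphic to the respective identity functors. The composite quotient-after-pullback is naturally isomorphic to the identity because $q$ is a principal bundle, so the canonical map ${*}{\times}^G q^*W \lra W$ over the base is an isomorphism compatibly with retraction and section. The composite pullback-after-quotient is naturally isomorphic to the identity because freeness of the diagonal $G$-action makes the canonical comparison $q^*({*}{\times}^G Y') \ra Y'$ an isomorphism of $G$-simplicial sets respecting all the retractive structure.

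The only real obstacle is this last isomorphism --- that a retractive $G$-space with free action over $EG{\times}Z$ is recovered from its orbit space by pulling back along $q$. This is precisely the descent statement for principal bundles in the simplicial category, and it is exactly the content of Lemma 2.1.3 of \cite{Waldhausen85}; its hypotheses hold here because the diagonal $G$-action on $EG{\times}Z$ is free. Hence the two functors form an equivalence of categories $\cR(EG{\times}^G Z) \heq \cR(EG{\times}Z, G)$, which is the assertion.
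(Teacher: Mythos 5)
Your proof is correct and follows essentially the same route as the paper: both observe that freeness of the diagonal $G$-action makes $EG{\times}Z \ra EG{\times}^G Z$ a principal $G$-bundle, realize the equivalence by the pullback and orbit-space functors, and cite Lemma 2.1.3 of Waldhausen as the key input. Your write-up merely makes explicit the verification of the two composite natural isomorphisms that the paper leaves implicit.
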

For reference, pullback along the projection
\begin{equation*}
 EG{\times}Z \ra EG {\times}^G Z
\end{equation*}
defines a functor 
$\cR(EG {\times}^G Z ) \ra \cR(EG {\times} Z, G)$; the orbit map defines a functor
in the opposite direction.  The composites in either order are isomorphic to the respective 
identity functors.   Moreover, these functors preserve weak equivalences  and 
homotopy finite objects.  

Next, we want the following lemma, which permits us to replace the $G$-action
on $Z$ with a free $G$-action on a homotopy equivalent space.
\begin{lemma}\label{makefree}
  The projection $EG \times Z  \ra Z$ induces a homotopy equivalence
  \begin{equation*}
    hS_{\bullet}\cR_{hf}(EG {\times}Z, G) \lra hS_{\bullet}\cR_{hf}(Z, G).
  \end{equation*}
\end{lemma}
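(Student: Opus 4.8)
The plan is to realize the functor induced by the projection $p \colon EG{\times}Z \ra Z$ explicitly and to produce an explicit homotopy inverse, leaning on the contractibility of $EG$ in an essential way. Write $p_* \colon \cR_{hf}(EG{\times}Z, G) \ra \cR_{hf}(Z, G)$ for the pushforward: $p_*(Y,r,s)$ is obtained by gluing $Y$ to $Z$ along the section via $p$, i.e.\ the pushout of $Z \xleftarrow{p} EG{\times}Z \xrightarrow{s} Y$. As this is a pushout along a cofibration, the gluing lemma (as used in the proof of proposition \ref{externalbiexactness}) shows $p_*$ is exact. In the reverse direction I would use the pullback $p^{*} = EG{\times}(-) \colon \cR_{hf}(Z, G) \ra \cR_{hf}(EG{\times}Z, G)$, sending $(Y,r,s)$ to $(EG{\times}Y, \id{\times}r, \id{\times}s)$ with the diagonal $G$-action; this is again exact. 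Here the homotopy-finiteness condition ``$hf$'' (rather than strict finiteness) is exactly what makes $p^{*}$ well defined, since $EG{\times}Y \heq Y$ is homotopy finite relative to $EG{\times}Z$ even though $EG$ itself is not finite; both functors also preserve freeness away from the base.

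The first key point, which I would isolate as a lemma in passing, is that on these categories a morphism is a weak equivalence if and only if its underlying (nonequivariant) realization is a homotopy equivalence. Indeed, recall that the $G$-action is free away from the base $Z$, so for a nontrivial subgroup $H$ the $H$-fixed points of any object $(Y,r,s)$ are exactly $s(Z^{H})$, and every morphism restricts to the identity there. By the equivariant Whitehead theorem a $G$-map of such objects is a $G$-homotopy equivalence precisely when the map of $e$-fixed points, that is, the underlying map, is a homotopy equivalence. This is what lets us exploit the contractibility of $EG$, even though $EG$ is not $G$-contractible and $p$ is itself not a $G$-homotopy equivalence.

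The second step is to check that the counit and unit are natural weak equivalences. For the counit, $p_*p^{*}Y$ is the pushout of $Z \xleftarrow{p} EG{\times}Z \xrightarrow{\id{\times}s} EG{\times}Y$, and the projection $EG{\times}Y \ra Y$ together with $s \colon Z \ra Y$ agree on $EG{\times}Z$, inducing a natural map $\epsilon_{Y} \colon p_*p^{*}Y \ra Y$. Mapping this pushout square to the trivial square whose pushout is $Y$, with vertical maps $\id_{Z}$, the projection $p$, and the projection $EG{\times}Y \ra Y$ — each an underlying homotopy equivalence because $EG$ is contractible — the gluing lemma shows $\epsilon_{Y}$ is an underlying homotopy equivalence, hence a weak equivalence by the previous paragraph. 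For the unit $\eta_{A} \colon A \ra p^{*}p_*A$, the collapse $A \ra p_*A$ is a cobase change of the underlying equivalence $EG{\times}Z \ra Z$ along the cofibration $EG{\times}Z \hookrightarrow A$, hence an underlying equivalence, and multiplying by the contractible factor $EG$ preserves this; so $\eta$ is a natural weak equivalence as well.

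Finally, since $p_*$ and $p^{*}$ are exact functors joined by natural weak equivalences $p_*p^{*} \heq \id$ and $p^{*}p_* \heq \id$, the standard fact (following \cite{Waldhausen85}) that a natural weak equivalence of exact functors induces a homotopy between the maps on $hS_{\bullet}$ shows that $p_*$ and $p^{*}$ realize mutually inverse homotopy equivalences. I expect the main obstacle to be bookkeeping rather than conceptual: one must confirm that $p_*$ and $p^{*}$ really do preserve both the homotopy-finiteness and the free-away-from-the-base conditions, verify the compatibilities making $\epsilon$ and $\eta$ natural, and set up the reductions to the gluing lemma with the correct cofibrations. All of the conceptual content is carried by the identification of weak equivalences with underlying homotopy equivalences together with the contractibility of $EG$.
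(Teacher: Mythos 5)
Your proposal is correct and follows essentially the same route as the paper's proof: the same pair of exact functors (pushout along the projection $EG{\times}Z \ra Z$, and product with $EG$), the same natural transformations comparing each composite with the identity (the paper's second comparison map is exactly your unit, given on an object $(Y',r',s')$ by $y' \mapsto (p_1r'(y'), y')$ followed by the quotient to $EG{\times}p_*Y'$), and the same concluding appeal to Waldhausen's Proposition 1.3.1 that natural weak equivalences of exact functors induce homotopies after applying $hS_{\bullet}$. The only substantive difference is that you spell out, via the fixed-point computation $Y^H = s(Z^H)$ for $H \neq \{e\}$ and the equivariant Whitehead theorem, why weak equivalences in these relatively free categories are detected on underlying spaces --- a point the paper's proof uses implicitly when asserting that its two natural transformations are weak equivalences.
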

\begin{proof}
  The argument here is similar to that given to prove Proposition 2.1.4
\cite[p.382]{Waldhausen85}. 
In detail, 
let $(Y', r',s') \in \cR_{hf}(EG {\times} Z, G)$.  Completing the diagram
\begin{equation*}
  \xy \UCMT \xymatrix{
Y' & \ar@{ >->}[l]_(0.6){s'}  EG {\times} Z \ar[r]^(0.65){p_2} &  Z}
\endxy
\end{equation*}
to a pushout defines an exact functor 
$\cR_{hf}(EG{\times}Z, G) \ra \cR_{hf}(Z, G)$. 
Certainly, homotopy finite objects are carried to homotopy-finite objects,
and, incidentally, finite  objects are carried to finite objects.
Also, weak equivalences are mapped to weak  equivalences. 

Taking the product with $EG$ provides an exact functor 
$\cR_{hf}(Z,G) \ra \cR_{hf}(EG{\times}Z, G)$.  In this case, when $G$ is nontrivial, finite 
objects are carried to homotopy finite objects, since $EG$ is contractible. 

For $(Y,r,s)$ in $\cR_{hf}(Z,G)$, taking the induced map of pushouts in the
following diagram provides a natural transformation from
the composite endofunctor on $\cR_{hf}(Z, G)$ to the identity functor.  This natural transformation is a 
weak equivalence.  
\begin{equation*}
  \xy \UCMT \xymatrix{
EG{\times}Y \ar[d]_{p_2} & \ar@{ >->}[l] EG{\times}Z \ar[r] \ar[d]_{p_2} & Z \ar[d]_{\id}
\\
Y     &     \ar@{ >->}[l]_{s} Z \ar[r]^{\id} & Z }
\endxy
\end{equation*}
For $(Y',r',s')$ in $\cR_{hf}(EG{\times}Z,G)$, taking the induced map of pushouts in the next diagram 
provides a natural transformation from the identity functor on $\cR_{hf}(EG{\times}Z, G)$ to 
the other composite endofunctor. Again, this natural transformation is a weak equivalence. 
\begin{equation*}
  \xy \UCMT \xymatrix{
Y' \ar[d]_{ p_1r' \times \id} 
             & \ar@{ >->}[l]_{s'}  EG {\times} Z \ar[r]^{\id} \ar[d]_{ \Delta \times \id } 
                                             & EG{\times} Z \ar[d]_{\id}
\\
EG{\times}Y'
             &  \ar@{ >->}[l]_(0.6){\id \times s'}  EG {\times} EG {\times} Z \ar[r]^(0.6){p_1\times p_3}
                                               &  EG{\times}Z }
\endxy
\end{equation*}
By Proposition 1.3.1 \cite[p.330]{Waldhausen85}
$ hS_{\bullet}\cR_{hf}(Z {\times} EG, G) \ra hS_{\bullet}\cR_{hf}(Z, G)$ is a homotopy equivalence.
\end{proof}
Substituting for $G$ the symmetric group $\Sigma_n$,
we combine lemmas \ref{divideout} and \ref{makefree} 
to record useful alternative models for $A(B\Sigma_n{\times}X)$ and $A(D_nX)$.
The first part covers a remark made following definition \ref{equivwcofandwe}.
\begin{lemma}\label{interpretations} 
Let $X$ have the trivial $\Sigma_n$-action, so that $B\Sigma_n {\times} X $ is the quotient of 
$E\Sigma_n {\times} X$ by the action of $\Sigma_n$.
  There are homotopy equivalences
  \begin{equation} \label{equivalencechain}
    hS_{\bullet}\cR_{hf}(B\Sigma_n {\times} X ) \simeq hS_{\bullet}\cR_{hf}(E\Sigma_n {\times} X, \Sigma_n) 
                      \simeq  hS_{\bullet}\cR_{hf}(X, \Sigma_n).
  \end{equation}
Thus, the space  $\Loops| hS_{\bullet}\cR_f(X, \Sigma_n) |$ is homotopy equivalent to $A(B\Sigma_n {\times} X )$. 

Similarly, let $X^n$ have the permutation action of $\Sigma_n$,
 and let $D_nX = E\Sigma_n \times^{\Sigma_n} X^n $ be the quotient of $E\Sigma_n {\times} X^n $ by the diagonal action of $\Sigma_n$.
  There are homotopy equivalences
  \begin{equation*}
      hS_{\bullet}\cR_{hf}(D_nX ) \simeq hS_{\bullet}\cR_{hf}(E\Sigma_n {\times} X^n, \Sigma_n) \simeq hS_{\bullet}\cR_{hf}(X^n, \Sigma_n).
  \end{equation*}
Thus, the space  $\Loops| hS_{\bullet}\cR_{hf}(X^n, \Sigma_n) |$ is homotopy equivalent to $A(D_nX)$. \qed
 \end{lemma}

We recall here basic facts about the transfer in the algebraic $K$-theory of spaces
adapted to our context.
We are actually interested in two cases of transfer operations.  For the first case 
the transfer operations are associated with finite subgroups of
the symmetric groups $\Sigma_n$. In the second case the
operations are associated with (injective)  homomorphisms of simplicial abelian groups 
$\widetilde{X} \ra X$, where the fiber is homotopy-finite.   

In terms of the description $A(X) = \Loops|hS_{\bullet}\cR_f(X)|$, 
we have the following direct transfer construction. A fiber bundle projection $p \colon E \ra B$ with finite
fiber, resp., homotopy-finite fiber, induces by pullback a functor
$\cR_f(B) \ra \cR_f(E)$, resp., $\cR_f(B) \ra \cR_{hf}(E)$.  We then obtain
a transfer morphism $p^* \colon A(B) \ra A(E)$. In terms of equivariant models for
algebraic $K$-theory, there are other descriptions of the transfer, as given below.
We need to relate the various descriptions.  

Eventually we need the transfer operations
 $A( B\Sigma_n {\times} X) \ra A( BH {\times} X)$,
where $H$ is a subgroup of $\Sigma_n$.
Our working definition is 
$A( B\Sigma_n {\times} X) = \Loops | hS_{\bullet}\cR_{hf}(X, \Sigma_n)|$
but, in view of the equivalences \eqref{equivalencechain},
% \begin{equation*}
%   hS_{\bullet}\cR_{hf}(X , \Sigma_n) \heq hS_{\bullet}\cR_{hf}( E\Sigma_n {\times} X, \Sigma_n)  \heq hS_{\bullet}\cR_{hf}(B\Sigma_n {\times} X), 
% \end{equation*}
we have to compare three definitions in each context. 

To this end, let $G$ be a discrete group, $H$ be a subgroup of finite index, and let $Z$ be a trivial $G$-space.
Observe that $EG{\times}Z$ is the total space of a principal $G$-bundle with base 
$EG \times^G Z$. To make this transparent, and for use in the study of diagram \eqref{maintransferdiagram}, 
 we replace the notation 
$EG \times^G Z$ by $* \times^G (EG{\times}Z)$.
To explain the connection, $* \times^G (EG{\times}Z)$ is the orbit space of $EG{\times}Z$
under the diagonal left $G$-action, thought of as the balanced product of $EG{\times}Z$
with the trivial right $G$-space $*$.  We can turn the left action of $G$ on $EG$ into a right action
by setting $e \cdot_r g = g^{-1} \cdot_l e $.  Then left $G$-orbits in $EG{\times}Z$ are seen to correspond to equivalence
classes in $EG{\times}Z$ under the equivalence relation generated by $(e\cdot_rg ,z) \sim (e, gz)$. 
The associated quotient space is usually denoted $EG \times^G Z$. 

We consider the following diagram where the vertical arrows represent transfer constructions. 
\begin{equation} \label{maintransferdiagram}
  \xy \UCMT \xymatrix{
 \cR(Z, H) \ar[r]  &      \cR(EG \times Z, H)      &    \ar[l]  \cR( EG \times^H Z)
\\
 \cR(Z, G)  \ar[u]_{p_1^*}  \ar[r] &      \cR(EG \times Z, G) \ar[u]_{p_2^*}  &      \ar[l] \cR(EG \times^G Z) \ar[u]_{p_3^*}
} \endxy
\end{equation}
The forgetful functor 
$p_1^* \colon \cR(Z, G) \ra \cR(Z, H)$ just restricts the action to the subgroup $H$.
This provides the simplest path to  
$p_1^* \colon A(BG{\times}Z) \ra A(BH{\times}Z)$, 
using the basic model $A(BG {\times} Z) = \Loops|hS_{\bullet}(\cR_{hf}(Z, G)|$. 
In the middle, the functor 
$p_2^* \colon \cR(EG {\times} Z, G) \ra \cR(EG {\times} Z,H)$ 
is also a forgetful functor.
At the right, the functor $p_3^* \colon \cR( EG \times^G Z) \ra \cR(EG \times^H Z)$ is given by 
a pullback construction, explained in detail below.

To reach the categories in the middle column from those in the left column we compute products with
$EG$. Along the top, the fact that $EG$ is a non-standard contractible $H$-space is an insignificant 
detail.  Comparing with $p_1^*$ on the  left, the transfer $p_2^*$ in the middle column
 is also obtained by restricting the action of $G$ to $H$.  
Thus, the lefthand square in diagram \eqref{maintransferdiagram}  obviously commutes.

Before we compare $p_3^*$ with $p_2^*$, we discuss $p_3^*$, the rightmost column in diagram \eqref{maintransferdiagram}, in detail.
In order to manipulate pullback squares efficiently we replace the notation 
$EG \times^G Z$ by $* \times^G (EG {\times} Z)$ as discussed before lemma \ref{divideout}.
Suppose $H$ is a subgroup of the group $G$, and let $EG$ be
the standard model for a contractible $G$-space on which $G$ acts freely from the  
right.  The
space  $EG$  plays a similar role relative to the subgroup $H$.  In order to 
compare situations, we take the standard model $X =* \times^G (EG {\times} Z) $ and
a modified model $\widetilde{X} =* \times^H (EG {\times} Z) $. 
In this situation we have the basic pullback square 
\begin{equation} \label{basicpullback}
  \xy \UCMT \xymatrix{
  ({*} \times^H G) \times (EG{\times}Z)  \ar[d]_{p_2} \ar[r]   & ({*} \times^H EG{\times}Z) =\widetilde{X}  \ar[d]
\\
  EG{\times}Z         \ar[r]             &     {*} \times^G (EG{\times}Z) = X }
\endxy
\end{equation}
This displays the comparison map $\widetilde{X} \ra X$ of the chosen models as a fiber bundle,
with fiber $* \times^H G$.  One may identify 
$*  \times^H (EG{\times}Z) \iso  (* {\times}^H G)\times^G (EG{\times}Z)$
and then the righthand vertical arrow is isomorphic to the map
$(* {\times}^H G) \times^G (EG{\times}Z) \ra * \times^G (EG{\times}Z)$ induced by projecting
the coset space $* {\times}^H G$ to a point. 
This replacement also displays the upper horizontal map as the quotient projection
$(* {\times}^H G) \times (EG{\times}Z)  \ra  (* {\times}^H G) \times^G (Z{\times}EG)$.

The direct construction 
$p^* \colon \cR(X) \ra \cR(\widetilde{X})$
maps $(Y,r,s)$ to $(\widetilde{Y}, \tilde{r}, \tilde{s})$ derived from the following pullback square.
\begin{equation}\label{nonequivariant}
  \xy \UCMT \xymatrix{
\widetilde{Y} \ar[r]^(0.25){\tilde{r}} \ar[d]   &   \widetilde{X} =    {*} \times^H (EG{\times}Z) \ar[d]_p
\\
  Y   \ar[r]^(0.25)r   &          X = {*} \times^G (EG{\times}Z) }
\endxy
\end{equation}
Augmenting the righthand column of \eqref{nonequivariant} to the square of \eqref{basicpullback} shows that
$\widetilde{Y} \ra Y$ 
is a fiber bundle with fiber  $* {\times}^H G$.

Now we address commutativity of  the righthand square in diagram \eqref{maintransferdiagram}.  
To reach the categories in the middle column from those in the right column we also compute pullbacks.
Recalling lemma \ref{divideout}, the equivalence of categories
$\cR(EG {\times}^G Z) \simeq \cR(EG \times Z, G)$ \cite[Lemma 2.1.3]{Waldhausen85}
describes the functor moving left to the middle column. 
This functor assigns to a retractive space $(Y,r,s)$ over $EG {\times}^G Z$
 the retractive $G$-space $(Y',r',s')$ over $EG{\times}Z$ defined as the  pullback in the following diagram.
\begin{equation*}
  \xy \UCMT \xymatrix{
 Y'  \ar[r]^{r'} \ar[d]   &    EG \times Z \ar[d]
\\
 Y   \ar[r]^(0.30)r     &   X =   {*} \times^G (EG{\times}Z) }
\endxy
\end{equation*}
Then moving up to $\cR(EG{\times}Z, H)$ amounts to 
restricting the $G$-action in this pullback to $H$. 

On the other hand, to move from the lower right to the upper middle by going up and then to the left, 
compute first the pullback
\eqref{nonequivariant}
and then compute
\begin{equation*}
  \xy \UCMT \xymatrix{
 \widetilde{Y}'  \ar[r]^{\tilde{r}'} \ar[d]   &    EG {\times} Z \ar[d]
\\
 \widetilde{Y}   \ar[r]^(0.25){\tilde{r}}     &   \widetilde{X} =    {*} \times^H (EG{\times}Z).
} 
\endxy
\end{equation*}
The composition of the two functors may be displayed in the stacked diagram
\begin{equation*}
  \xy \UCMT \xymatrix{
\widetilde{Y}' \ar[r]^{\tilde{r}'} \ar[d]    &                EG \times Z \ar[d]
\\
\widetilde{Y} \ar[r]^(0.30){\tilde{r}} \ar[d]   &   \widetilde{X} =   {*} \times^H (EG{\times}Z) \ar[d]_p
\\
  Y   \ar[r]^(0.30)r   &          X =  {*}  \times^G (EG{\times}Z)}
\endxy
\end{equation*}
The end result is that $(\widetilde{Y}', \tilde{r}', \tilde{s}')$ is simply
the $G$-space $(Y', r',s')$ with the action restricted to $H$.  Therefore, the righthand square commutes.

\begin{lemma}[Compare \cite{Waldhausen82}, Lemma 1.3, p.399]\label{transferproperty}
Let $G$ be a finite group, $EG$ a universal $G$-bundle, 
$BG = * \times^{G} EG$ a classifying space, and let $Z$ be a space with a trivial $G$-action.
Then the composition
\begin{equation*}
\xy \UCMT \xymatrix@C=8ex{
A(Z) \ar[r]^(0.4){\text{inclusion}} & A(BG \times Z) \ar[r]^(0.43){\text{transfer}} 
                                                                 & A(EG \times Z) \heq A(Z)}
\endxy 
\end{equation*}
is given by multiplication by the order of $G$, in the sense of the additive $H$-space 
structure. \qed
\end{lemma}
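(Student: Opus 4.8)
The plan is to realize the three maps in the composition as functors between the equivariant categories of retractive spaces developed in this section, and then to trace a single object through the composite. Using Proposition \ref{AXtimesBSigman} together with Lemmas \ref{makefree} and \ref{divideout}, I would model $A(BG{\times}Z)$ by $\Loops|hS_{\bullet}\cR_{hf}(EG{\times}Z,G)|$. Taking $H=\{e\}$ in diagram \eqref{maintransferdiagram}, the transfer $A(BG{\times}Z)\to A(EG{\times}Z)$ is then induced by pullback along the covering $p\colon EG{\times}Z\to EG{\times}^G Z = BG{\times}Z$, which under Lemma \ref{divideout} corresponds to forgetting the $G$-action down to the trivial subgroup; the concluding equivalence $A(EG{\times}Z)\heq A(Z)$ comes from the projection $\pi\colon EG{\times}Z\to Z$ as in Lemma \ref{makefree}. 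The first map, the inclusion $A(Z)\to A(BG{\times}Z)$, is the pushforward $j_{*}$ along $j\colon Z\iso\{*\}{\times}Z\to BG{\times}Z$, the inclusion of the fibre over the basepoint of $BG$.

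First I would compute the effect of $p^{*}\com j_{*}$ on an object $(Y,r,s)\in\cR_{hf}(Z)$. Since $p$ is a covering whose fibre over the basepoint is $G$, pulling the pushout $j_{*}Y=Y\cup_{Z}(BG{\times}Z)$ back along $p$ produces the free $G$-space $W=(G{\times}Y)\cup_{G{\times}Z}(EG{\times}Z)$: one copy of $Y$ attached over each of the $|G|$ points of the fibre $G\subset EG$, glued along the common section, with $G$ permuting the copies freely. Forgetting the $G$-action and then applying $\pi_{*}$ collapses the contractible factor $EG$, so the $|G|$ copies of $Y$ descend to $|G|$ copies over $Z$ joined along $Z$, i.e. to the $|G|$-fold categorical sum $Y\wdX\cdots\wdX Y$. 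Since $\wdX$ is the categorical sum of Theorem \ref{symmetricbimonoid}, which underlies the additive $H$-space structure on $A(Z)$, the resulting endomorphism of $A(Z)$ is multiplication by $|G|$, as claimed.

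The hard part will be justifying the base-change identity $p^{*}\com j_{*}\heq J_{*}\com q^{*}$ for the Cartesian square
\begin{equation*}
  \xy \UCMT \xymatrix{
  G{\times}Z \ar[r]^{J} \ar[d]_{q}   &   EG{\times}Z \ar[d]^{p}
\\
  Z \ar[r]^(0.4){j}   &   BG{\times}Z,
}
\endxy
\end{equation*}
which is what guarantees that $p^{*}j_{*}Y$ has the free form $W$ above, with $q^{*}Y=G{\times}Y$ the $|G|$ disjoint copies pulled back along the projection $q$. Because we work with simplicial sets, the relevant colimits are computed dimensionwise and pullback along the covering $p$ preserves them, so this identification is routine but must be checked carefully; one must also confirm, exactly as in the proofs of Lemmas \ref{makefree} and \ref{divideout}, that each functor preserves weak equivalences and carries (homotopy-)finite objects to (homotopy-)finite objects, the latter using the contractibility of $EG$. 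Granting the base change, the remaining step---pushing $q^{*}Y$ through $J_{*}$ and $\pi_{*}$ to the $|G|$-fold sum---is formal.
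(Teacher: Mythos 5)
Your proof is correct and coincides in essence with the paper's own second proof: the paper likewise realizes the inclusion as a pushforward and the transfer as pullback along the covering $EG{\times}Z \ra BG{\times}Z$, invokes the same base-change fact $p_2^*f_* = p_1^*g_*$ for a Cartesian square with finite fiber (using the self-fiber-product square with corner $G{\times}EG{\times}Z$ rather than your square with corner $G{\times}Z$, an immaterial difference), and concludes that the composite carries $(Y,r,s)$ to $\abs{G}$ copies of $Y$ glued along $Z$, i.e.\ the $\abs{G}$-fold categorical sum. The paper also records an even shorter first proof---model $A(BG{\times}Z)$ directly by $hS_{\bullet}\cR_f(Z,G)$, observe that the inclusion sends $Y$ to the induced relatively free $G$-space ($\abs{G}$ copies of $Y$ glued along $Z$) and that the transfer simply forgets the action---but your route is the paper's second argument and needs no repair.
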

% %
\section{A fundamental cofibration sequence} \label{Formulas}
%Revised 06 July 2017
Waldhausen's main result is this proposition.
\begin{proposition}[Compare \cite{Waldhausen82}, Proposition 2.7, p.407]
  \label{modelrecursion}
 The composition of the operation $\theta^n \colon A(*) \ra A( B\Sigma_n \times *)$
with the transfer map $\phi_n \colon A( B\Sigma_n \times *) \ra A(*)$
is the same, up to weak homotopy, as the polynomial map on $A(*)$ given by the polynomial
\begin{equation*}
  p(x) = x(x-1)\cdots(x{-}n{+}1). \qed
\end{equation*}
\end{proposition}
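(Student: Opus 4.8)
The plan is to read off $\phi_n\theta^n$ from the explicit model of $\theta^n$ as the top quotient in the power filtration, interpret the transfer as forgetting the group action, and then recover the falling factorial from the classical Stirling decomposition of the $n$th power functor. First I would use proposition~\ref{identification}, which identifies $\theta^n$ (up to the stabilization $j^n$) with the functor $Y \mapsto \theta^n Y = P^nY/P^n_{n-1}Y$ of definition~\ref{thetadef}. Over the one-point space this is the retractive $\Sigma_n$-space whose non-basepoint simplices are the ordered $n$-tuples of \emph{distinct} non-basepoint simplices of $Y$, with $\Sigma_n$ permuting coordinates freely. Next I would invoke the transfer analysis of section~\ref{Transfer}: for an object of $\cR_f(*, \Sigma_n, \{e\})$, on which $\Sigma_n$ acts freely away from the basepoint, the transfer $\phi_n \colon A(B\Sigma_n) \to A(E\Sigma_n) \heq A(*)$ is computed by pulling back along $E\Sigma_n \to B\Sigma_n$ and forgetting the action, so that $\phi_n\theta^n(Y)$ is simply the underlying retractive space of the free configuration $P^nY/P^n_{n-1}Y$.

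With this identification the computation becomes combinatorial, organized by the filtration $X^n = P^n_0Y \subset P^n_1Y \subset \cdots \subset P^n_{n-1}Y \subset P^n_nY = P^nY$. Since the transfer, interpreted as forgetting the action, is additive on the cofibration sequences $P^n_{j-1}Y \to P^n_jY \to P^n_jY/P^n_{j-1}Y$, I would write $\phi_n[P^nY] = \sum_{j=0}^{n}\phi_n[P^n_jY/P^n_{j-1}Y]$, where the $j$th summand records the $n$-tuples using exactly $j$ distinct non-basepoint values. Forgetting the action, the full power $P^nY$ is the $n$-fold smash product, so by the multiplicative structure of theorem~\ref{structure} (the Cartan formula $\gamma^k_n$) the class $\phi_n[P^nY]$ is the naive $n$th power map $x \mapsto x^n$. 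Each subquotient with exactly $j$ distinct values is induced, via the block (wreath) structure, from a free $\Sigma_j$-configuration, and an induction on $n$ identifies its transfer with $S(n,j)$ copies of the $j$th falling factorial, reproducing the identity $x^n = \sum_{j=0}^{n} S(n,j)\,x(x-1)\cdots(x-j+1)$ with $S(n,j)$ the Stirling numbers. The top term $j=n$ carries coefficient $S(n,n)=1$ and is precisely $\phi_n\theta^n$, so solving this triangular system pins it down as $p(x) = x(x-1)\cdots(x{-}n{+}1)$.

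The delicate point, and the step I expect to be the main obstacle, is promoting this $\pi_0$-level count to the asserted equality of self-maps of $A(*)$ ``up to weak homotopy,'' that is, making precise the sense in which the free-configuration functor \emph{is} the polynomial map $p$ on the whole $K$-theory space rather than merely on $\pi_0A(*) = \bZ$. Here I would lean on the $E_\infty$-structure of theorem~\ref{structure} together with Segal's group-completion theorem, which present $p$ as an honest map assembled from the ring multiplication and the integer translations, and which reduce the comparison to evaluation on the spherical generators, wedges of spheres $\bigvee^{x} S^q$, where the count of ordered $n$-tuples of distinct non-basepoint cells is visibly $x(x-1)\cdots(x{-}n{+}1)$. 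As a consistency check I would observe that the linear coefficient of $p$ is the signed Stirling number $s(n,1) = (-1)^{n-1}(n{-}1)!$; since only the derivative $p'(0)$ of a polynomial map survives on the higher homotopy of $A(*)$, this recovers the specialization $\tau^n = \id$ of theorem~\ref{generalcalculation}, confirming that proposition~\ref{modelrecursion} is the $\pi_0$-refinement of the general calculation.
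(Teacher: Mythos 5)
Your decomposition is correct in outline, but it is a genuinely different route from the one the paper rests on. Note first that the paper does not actually prove proposition \ref{modelrecursion}: it is recalled from Waldhausen \cite{Waldhausen82} with a \qed, and what the paper develops instead is the generalized form of Waldhausen's argument, namely the two-term recursion of proposition \ref{recursion1} and theorem \ref{generalcalculation}. Over the point that argument reads: the cofibration sequence $\Delta^n_{n-1}\phi_{n-1}\theta^{n-1}Y \rightarrowtail \phi_{n-1}\theta^{n-1}Y \sma \theta^1 Y \twoheadrightarrow \phi_n\theta^n Y$ plus additivity (lemma \ref{additivity}) gives $t_n(x) = x\,t_{n-1}(x) - (n{-}1)\,t_{n-1}(x)$, since over $X=*$ the diagonal subobject is a wedge of $n{-}1$ copies of $\phi_{n-1}\theta^{n-1}Y$; hence $t_n(x)=(x{-}n{+}1)t_{n-1}(x)$ and the falling factorial follows by induction, using only the top two stages of the power filtration. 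You instead use the full filtration $P^n_0Y \subset \cdots \subset P^n_nY$ by number of distinct coordinates, identify the $j$th graded piece as $S(n,j)$ copies of $\theta^jY$ (one per partition of $\fset{n}$ into $j$ blocks), and invert the triangular system $x^n = \sum_j S(n,j)\,x(x{-}1)\cdots(x{-}j{+}1)$. That is essentially Segal's original combinatorial picture \cite{Segalops}; it produces the whole polynomial in one pass, at the cost of identifying every graded piece rather than just the top one. The recursion buys something you should notice: it survives the generalization to $X \neq *$, where the diagonal pushforwards $\delta^{n,k}_{n-1*}$ acquire the twist by $\tau^n$ and the middle smash terms only die on higher homotopy — that is exactly theorem \ref{generalcalculation} — whereas the clean wedge decomposition of your graded pieces uses $X=*$ heavily. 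Your closing observation that $p'(0)=(-1)^{n-1}(n{-}1)!$ matches theorem \ref{generalcalculation} is precisely the right consistency check.

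One concrete caution about your final step. Evaluating on wedges of spheres is a sign trap: for $Y=\bigvee^x S^q$ with $q$ odd, the class of $Y$ is $-x$, while the underlying space of $\theta^nY$ is a wedge of $x(x{-}1)\cdots(x{-}n{+}1)$ copies of $S^{nq}$, of class $(-1)^{nq}\,x(x{-}1)\cdots(x{-}n{+}1)$; comparing with $p(-x)=(-1)^n x(x{+}1)\cdots(x{+}n{-}1)$ shows the naive count matches the falling factorial only for even $q$ (for odd $q$ one sees the rising factorial). So ``visibly $x(x{-}1)\cdots(x{-}n{+}1)$'' is not dimension-independent, and a reduction to spherical generators must carry the suspension bookkeeping of section \ref{Suspension}. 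The cleaner fix, and the one consistent with how the paper argues, is to avoid point evaluation altogether: your cofibration sequences $P^n_{j-1}Y \rightarrowtail P^n_jY \twoheadrightarrow P^n_jY/P^n_{j-1}Y$ are natural in $Y$, so lemma \ref{additivity}, combined with the group-completion step of diagram \eqref{gpcompdiagram1} and the suspension compatibility already established, converts the Stirling decomposition directly into an identity of weak homotopy classes of self-maps of $A(*)$, with no evaluation on generators required.
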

The analogous result for the present situation with the one point space
replaced by a simplicial abelian group $X$ is more complicated to formulate and to work with.
To prepare for the analogue of Waldhausen's result, we develop the following constructions, taking up where we left off
with definition \ref{thetadef} and proposition \ref{identification}. 
We make use of the maps
\begin{equation*}
 {\delta}_{n-1}^{n,k} \colon X^{n-1} \ra X^n \quad  
                   \text{given by} \quad \delta_{n-1}^{n,k}(x_1, \ldots, x_{n-1}) = (x_1, \ldots, x_{n-1}, x_k)
\end{equation*}
and  the respective induced functors $\delta_{n-1*}^{n,k} \colon \cR_f(X^{n-1}) \ra \cR_f(X^n)$.
The pushout construction
\begin{equation*}
  \xy \UCMT \xymatrix{ 
  X^{n-1} \ar@{ >->}[r]^s \ar[d]_{{\delta}_{n-1}^{n,k}} &   Z \ar[d]_{i_{n-1}^{n,k}}
\\
  X^n    \ar@{ >->}[r]      &   {\delta}_{n-1*}^{n,k}Z }
\endxy
\end{equation*}
defines an exact functor ${\delta}_{n-1*}^{n,k} \colon \cR_f(X^{n-1}) \ra \cR_f(X^n)$.  
For a retractive space $(Z, r, s)$ over $X^{n-1}$ with retraction $r\colon Z\ra X^{n-1}$ written
in terms of components as $r=(r_1, \ldots, r_{n-1})$, the composition of the canonical map
$i_{n-1}^{n,k}$ followed by the retraction ${\delta}_{n-1*}^{n,k}r$ is given by the formula
$({\delta}_{n-1*}^{n,k}r) \com i_{n-1}^{n,k} (z) = {\delta}_{n-1}^{n,k} \com r(z) 
                   = \bigl( r_1(z), \ldots, r_k(z), \ldots, r_{n-1}(z),  r_k(z) \bigr).$
Note that in the special case
$Z=\widetilde{P}^{n-1}Y = \bigl(\esm\bigr)^{n-1} Y$, we have, for each $k$, $1 \leq k \leq n{-}1$, 
\begin{equation}
  \label{diagonalformula1}
 \bigl( \delta_{n-1*}^{n,k} (\widetilde{P}^{n-1} r)\bigr) \com  i_{n-1}^{n,k} (y_1, \ldots y_{n-1}) 
                                = \bigl( r(y_1), \ldots, r(y_k), \dots, r(y_{n-1}), r(y_k)  \bigr).
\end{equation}
Next we assemble these functors by gluing along the common space $X^n$, obtaining
\begin{equation*}
  \widetilde{\Delta}^n_{n-1} \colon \cR_f(X^{n-1}) \lra \cR_f(X^n)
\end{equation*}
given on objects by 
$\widetilde{\Delta}_{n-1}^n(Z) = \delta_{n-1 *}^{n,1}Z \cup_{X^n} \ldots \cup_{X^n} \delta_{n-1 *}^{n,n-1}Z$, 
which can be viewed as an iterated pushout or as the colimit of a diagram modeled on the
cone on $n{-}1$ points.  We also need to push this construction forward to $\cR_f(X)$ by $\mu_*$, 
the iterated multiplication, obtaining
\begin{equation*}
  \Delta^n_{n-1} = \mu_* \com \widetilde{\Delta}^n_{n-1} \colon \cR_f(X^{n-1}) \lra \cR_f(X)
\end{equation*}
 given on objects by 
${\Delta}_{n-1}^n(Z) = \mu_*(\delta_{n-1*}^{n,1}Z) \cup_{X} \ldots \cup_{X} \mu_*(\delta_{n-1*}^{n,n-1}Z)$.
If we start with $Z = Y \esm \stackrel{\text{$n{-}1$ factors}}{\cdots} \esm Y = \widetilde{P}^{n-1}Y$, 
then the formula for the retraction on the $k$th summand
$\mu_*(\delta_{n-1*}^{n,k} \widetilde{P}^{n-1}Y)$ is
\begin{equation}
\label{diagonalformula2}
  \bigl( \mu_*\delta_{n-1*}^{n,k} (\widetilde{P}^{n-1} r)\bigr) \com  i_{n-1}^{n,k} (y_1, \ldots y_{n-1}) 
                                = \mu\bigl( r(y_1), \ldots, r(y_k), \dots, r(y_{n-1}), r(y_k)  \bigr),
\end{equation}
where $\mu$ is the iterated multiplication.

We can now succinctly state our general results. Let
\begin{equation*}
  \tilde{\phi}_k \colon \cR_f(X^k, \Sigma_k, \{{\rm all}\} ) \ra \cR_f(X^k)
\quad \text{and} \quad
 \phi_k \colon \cR_f(X, \Sigma_k, \{{\rm all}\} ) \ra \cR_f(X)
\end{equation*}
be the functors that forget the group action.
\begin{proposition}[Compare \cite{Waldhausen82}, Proposition 2.7, page 407] \label{recursion1}
 There is a cofibration sequence of functors $\cR_f(X) \ra \cR_f(X^n)$
 \begin{equation} \label{unreducedcofibseq}
     \xy \UCMT \xymatrix{
\widetilde{\Delta}_{n-1}^n \tilde{\phi}_{n-1}\tilde{\theta}^{n-1} Y  \ar@{ >->}[r]
                    &   \tilde{\phi}_{n-1}\tilde{\theta}^{n-1} Y \esm \tilde{\theta}^1 Y  \ar@{->>}[r] 
                           & \tilde{\phi}_n\tilde{\theta}^n Y }
\endxy
 \end{equation}
In the case $X$ is a connected simplicial abelian group, we have the cofibration sequence
 \begin{equation}   \label{reducedcofibseq}
   \xy \UCMT \xymatrix{
\Delta_{n-1}^n\tilde{\phi}_{n-1}{\tilde{\theta}}^{n-1} Y  \ar@{ >->}[r]
                    &   \phi_{n-1}{\theta}^{n-1} Y \sma \theta^1 Y   \ar@{->>}[r] 
                           & \phi_n{\theta}^n Y }
\endxy
 \end{equation}
of functors $\cR_f(X) \ra \cR_f(X)$. 
\end{proposition}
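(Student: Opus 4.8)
The plan is to prove the unreduced sequence \eqref{unreducedcofibseq} inside $\cR_f(X^n)$, where cofibrations are exactly the injections of simplicial sets, and then to obtain the reduced sequence \eqref{reducedcofibseq} by applying the iterated multiplication functor. Because $\tilde\phi_{n-1}$ and $\tilde\phi_n$ merely forget group actions, the underlying simplicial sets are untouched and all equivariance issues vanish; I may therefore argue dimensionwise, as in the discussion before definition \ref{thetadef}. The basic dictionary is that in each simplicial degree the non-basepoint simplices of $\tilde\theta^m Y$ are precisely the $m$-tuples of pairwise distinct elements of $Y{-}X$, since definition \ref{thetadef} collapses the tuples using at most $m{-}1$ distinct values to the zero section $X^m$. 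Note also that $\tilde\theta^1 Y = Y$, so the middle term is $\tilde\phi_{n-1}\tilde\theta^{n-1}Y \esm Y$.

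First I would pin down the middle term. Associativity of $\esm$ identifies $\widetilde{P}^{n}Y \iso \widetilde{P}^{n-1}Y \esm Y$, and since $-\esm Y$ preserves pushouts of cofibrations (proposition \ref{externalbiexactness}), smashing the defining quotient of $\tilde\theta^{n-1}Y$ with $Y$ yields a natural isomorphism
\[
  \tilde\phi_{n-1}\tilde\theta^{n-1}Y \esm \tilde\theta^1 Y \iso \widetilde{P}^{n}Y \big/ \bigl( \widetilde{P}^{n-1}_{n-2}Y \esm Y\bigr),
\]
whose non-basepoint simplices are the tuples $(y_1,\dots,y_n)$ with $y_1,\dots,y_{n-1}$ distinct in $Y{-}X$ and $y_n \in Y{-}X$ arbitrary. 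Next, formula \eqref{diagonalformula1} shows that applying $\widetilde\Delta^n_{n-1}$ to $\tilde\phi_{n-1}\tilde\theta^{n-1}Y$ produces exactly the tuples $(y_1,\dots,y_{n-1},y_k)$ with $(y_1,\dots,y_{n-1})$ distinct; these are the simplices of the middle term for which $y_n$ repeats one of the first $n{-}1$ entries. Distinctness forces the index $k$ to be unique, so the $n{-}1$ glued copies meet only along $X^n$ and the canonical map into the middle term is injective, hence a cofibration. Collapsing this sub-object to the zero section deletes precisely the tuples with a repeated last coordinate, leaving the $n$-tuples of pairwise distinct elements, so the cofiber is $\tilde\phi_n\tilde\theta^n Y$; naturality of every map in $Y$ assembles these into a cofibration sequence of functors.

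To deduce \eqref{reducedcofibseq} I would push \eqref{unreducedcofibseq} forward along the iterated multiplication $\mu_* \colon \cR_f(X^n) \ra \cR_f(X)$, which is exact \cite[Lemma 2.1.6]{Waldhausen85} and hence sends cofibration sequences to cofibration sequences. Matching the terms is then bookkeeping: $\mu_*\widetilde\Delta^n_{n-1} = \Delta^n_{n-1}$ and $\mu_*\tilde\theta^m = \theta^m$ hold by definition (using exactness of $\mu_*$ on the defining pushouts and forgetting the action), while factoring the multiplication as $X^{n-1}{\times}X \ra X{\times}X \ra X$ and invoking the naturality of the external smash product (proposition \ref{extprodnaturality}) gives $\mu_*\bigl(\tilde\phi_{n-1}\tilde\theta^{n-1}Y \esm Y\bigr) \iso (\phi_{n-1}\theta^{n-1}Y)\sma \theta^1 Y$, since $\sma = \mu_* \com \esm$.

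The delicate part is entirely in the second step: one must keep exact track of which tuples survive each successive collapse and verify that the further identification distinguishing $\tilde\theta^{n-1}Y \esm Y$ from $\tilde\theta^n Y$ is carried precisely by the fattened diagonal, in agreement with the retraction formulas \eqref{diagonalformula1} and \eqref{diagonalformula2}. Handling the distinctness conditions and the basepoint section $X^n$ correctly, and confirming that all identifications are natural in $Y$ so that they promote to natural transformations of functors, is where the care lies; once the unreduced sequence is in hand the reduction is purely formal.
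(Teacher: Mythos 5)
Your proposal is correct and takes essentially the same approach as the paper: the same identification of the middle term $\tilde{\phi}_{n-1}\tilde{\theta}^{n-1}Y \esm Y$ via biexactness of $\esm$, the same simplex-level observation that the tuples with last coordinate repeating one of the $n{-}1$ distinct earlier entries form exactly the fattened diagonal (with the repeating index $k$ unique, which gives injectivity), and the same passage to the reduced sequence by applying the exact functor $\mu_*$ with the factorization $\mu = \mu \com (\mu \times \id)$. The only difference is organizational: the paper assembles the sequence by row-wise pushouts of diagram \eqref{diagram2recursion1} using lemma \ref{iteratedcolimlemma}, whereas you verify the cofibration and identify the cofiber directly dimensionwise using that cofibrations are injections, which is an equivalent bookkeeping of the same combinatorics.
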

\begin{remark}
  The second cofibration sequence is obtained by applying the exact functor induced by 
the iterated multiplication $\mu \colon X^n \ra X$ to the first sequence.  The result in the
middle term of the second sequence is open to interpretation.  The formulation chosen amounts to interpretation
of the factorization $\mu = \mu \com (\mu{\times}\id)$ along with the facts that $\mu_* \com \esm = \sma$ and
$\tilde{\theta}^1Y = \theta^1Y = Y$. 
\end{remark}
\begin{proof}
Following section \ref{Transfer}, we interpret  the transfer maps 
\begin{equation*}
  \phi_n \colon A(D_nX) \ra A(X^n) 
\quad \text{and} \quad
\phi_n \colon A(X{\times}B\Sigma_n) \ra A(X)
\end{equation*}
 as induced by the forgetful functors
 \begin{equation*}
\cR_f(X^n, \Sigma_n , \{{\rm all}\}) \ra \cR_f(X^n, \{e\}) \quad \text{and} \quad
\cR_f(X, \Sigma_n  \{{\rm all}\}) \ra \cR_f(X, \{e\}),
 \end{equation*}
 respectively. This means we have to make  non-equivariant analyses of the functors
$\tilde{\theta}^n$ and $\theta^n$, respectively.

To obtain the surjections, we consider the following diagram
\begin{equation}\label{diagram1recursion1}
  \xy \UCMT \xymatrix@C=11ex{
X \ar[d]
   & \ar[l]_{\mu} X^{n-1}{\times}X \ar[d]
           & \ar[l]_{(( \esm )^{n-1} r ) \esm r} \widetilde{P}^{n-1}_{n-2}Y \esm Y \ar@{ >->}[r] \ar[d]
                             & \widetilde{P}^{n-1}Y \esm Y \ar[d]^{\iso}
\\
X 
   & \ar[l]_{\mu} X^n 
           & \ar[l]_{r^n} \widetilde{P}^{n}_{n-1}Y \ar@{ >->}[r] 
                             & \widetilde{P}^{n}Y }
\endxy
\end{equation}
Clearly, $\widetilde{P}^{n-1}_{n-2} \esm Y$ maps into $\widetilde{P}^{n}_{n-1}$,
because, if there are two indices $i$, $j$ with $1 \leq i, j \leq n{-}1$ and $i \neq j$ and with $y_i = y_j$,
then this still holds for $\bigl((y_1, \ldots, y_{n-1}), y \bigr)$ rebracketed as
$(y_1, \ldots, y_{n-1}, y)$. 
Taking the pushouts along the rows using the columns two, three, and four
produces a surjection 
\begin{equation*}
   \xy \UCMT \xymatrix{
   \phi_{n-1}\tilde{\theta}^{n-1}Y \esm \widetilde{\theta}^1Y \ar@{->>}[r] & \phi_n\widetilde{\theta}^n Y}
\endxy
\end{equation*}
in $\cR_f(X^n)$
 and  pushing out along the rows using columns one, three and four yields
 \begin{equation*}
    \xy \UCMT \xymatrix{
\phi_{n-1}{\theta}^{n-1} Y \sma \theta^1 Y \ar@{=}[r]  & \mu_*(\phi_{n-1}\theta^{n-1}Y \esm \theta^1Y) \ar@{->>}[r] & \phi_n\theta^nY,}
\endxy
 \end{equation*}
the surjection in $\cR_f(X)$.  Now we have to identify the ``kernels.''

Reviewing the remarks at the end of section \ref{Splitting}, 
$\widetilde{P}^{n-1}Y \esm Y=\widetilde{P}^nY$ is the space whose simplices
outside of $X^n$ are $n$-tuples of simplices from $Y{-}X$; 
$\widetilde{P}^{n-1}_{n-2}Y \esm Y$ is
the space whose simplices outside of $X^n$ are $n$-tuples $((y_1, \ldots, y_{n-1}),y)$
with the condition that there are at least two distinct indices $1 \leq i,j\leq n{-}1$
with $y_i=y_j$; and 
$\widetilde{P}^n_{n-1}Y $ is the space whose simplices outside of $X^n$ are
$n$-tuples $((y_1, \ldots, y_{n-1}, y_n)$
with the condition that there are at least two distinct indices $1 \leq i,j\leq n$
with $y_i=y_j$.  Then the simplices of 
$\widetilde{P}^n_{n-1}Y $ not in the image of $\widetilde{P}^{n-1}_{n-2}Y \esm Y$
are those $n$-tuples where the first $n{-}1$ are distinct but $y_n = y_k$ for
some $1 \leq k \leq n{-}1$. 

Using this observation we extend the diagram \ref{diagram1recursion1} by means of the following
constructions.  For $1 \leq k \leq n{-}1$, consider the diagrams
\begin{equation*}
  \xy \UCMT \xymatrix{
   X^n    \ar[d]  &    \ar[l]_{\delta_{n-1}^{n,k}}  X^{n-1} \ar[d]_{\delta_{n-1}^{n,k}} \ar[r] & \widetilde{P}^{n-1}Y \ar[d]^{\delta_{n-1}^{n,k}}
\\
   X^n &  \ar[l]  \widetilde{P}^{n-1}Y{\times}X \cup_{X^n} X^{n-1}{\times}Y \ar[r]   &    \widetilde{P}^{n-1}Y \times Y}
\endxy
\end{equation*}
where $\delta_{n-1}^{n,k} \colon X^{n-1} \ra X^n$ is given by
$\delta_{n-1}^{n,k}(x_1, \ldots , x_{n-1}) = (x_1, \ldots, x_{n-1}, x_k)$
and the other maps labeled $\delta_{n-1}^{n,k}$ are given by similar formulas.
For each $k$, taking the pushout of the first row extends $\widetilde{P}^{n-1}Y$ over $X^{n-1}$ to 
the space $\delta_{n-1*}^{n,k}\widetilde{P}^{n-1}Y$ over $X^n$; taking the pushout of the second row yields $\widetilde{P}^{n-1}Y \esm Y$.
Since the diagram commutes, we obtain a family of maps over $X^n$
\begin{equation*}
\delta_{n -1}^{n,k} \colon {\delta}_{n-1*}^{n,k}\tilde{\phi}_{n-1}\widetilde{P}^{n-1}Y \ra  \tilde{\phi}_{n-1}\widetilde{P}^{n-1} Y \esm Y
\end{equation*}
with $\delta_{n -1}^{n,k} ( y_1, \ldots, y_{n-1}) = (y_1, \ldots,y_k, \ldots, y_{n-1}, y_k)$.  

Now we are ready to augment diagram \ref{diagram1recursion1} after which we can compute the desired
cofibration sequence. 
Having established the notation 
\begin{equation*}
\widetilde{\Delta}_{n-1}^n \tilde{\phi}_{n-1}\widetilde{P}^{n-1}Y =  
  \delta_{n-1 *}^{n,1}\tilde{\phi}_{n-1}\widetilde{P}^{n-1}Y \cup_{X^n} 
            \ldots \cup_{X^n} \delta_{n-1 *}^{n,n-1}\tilde{\phi}_{n-1}\widetilde{P}^{n-1}Y
\end{equation*}
write 
$\Delta_{n-1}^n \colon \Delta_{n-1}^n \phi_{n-1}\widetilde{P}^{n-1}Y  \ra \phi_{n-1}\widetilde{P}^{n-1}Y \esm Y$
for the union of the maps $\delta_{n-1}^{n,k}$ just defined. 
Add this map above the upper right corner of the  diagram \eqref{diagram1recursion1} and fill out the following diagram.
\begin{equation}
  \label{diagram2recursion1}
  \xy \UCMT \xymatrix@C+=8ex{
X \ar[d]
     & \ar[l]_{\mu} X^n \ar[d]_{\iso} 
              & \ar[l] \Delta_{n-1}^n \tilde{\phi}_{n-1} \widetilde{P}^{n-1}_{n-2}Y  \ar@{ >->}[r]^{i'} \ar@{ >->}[d]_{\Delta_{n-1}^n}
                    &  \Delta_{n-1}^n \tilde{\phi}_{n-1}\widetilde{P}^{n-1}Y    \ar@{ >->}[d]_{\Delta_{n-1}^n}
\\
X \ar[d]
   & \ar[l]_{\mu} X^{n-1}{\times}X \ar[d]
           & \ar[l]_(0.55){r^{n-1}{\esm}r} \tilde{\phi}_{n-1}\widetilde{P}^{n-1}_{n-2}Y{\esm}Y \ar@{ >->}[r]^{i} \ar[d]
                             & \tilde{\phi}_{n-1}\widetilde{P}^{n-1}Y{\esm}Y \ar[d]^{\iso}
\\
X 
   & \ar[l]_{\mu} X^n 
           & \ar[l]_{r^n} \widetilde{P}^{n}_{n-1}Y \ar@{ >->}[r]^{i''} 
                             & \widetilde{P}^{n}Y }
\endxy
\end{equation}
To explain the entry at the top of the third column,
we identify the conditions on 
\begin{equation*}
(z_1, \ldots, z_n)\in  (\tilde{\phi}_{n-1}\widetilde{P}_{n-2}^{n-1}Y) \esm Y 
\quad \text{and} \quad 
(y_1, \ldots y_{n-1}) \in \Delta_{n-1}^n\tilde{\phi}_{n-1}\widetilde{P}^{n-1}Y
\end{equation*}
such that
$i(z_1, \ldots, z_n) = \Delta_{n-1}^n(y_1, \ldots, y_{n-1})$. 
We see that $z_j = y_j$ for  $ 1 \leq j \leq n{-}1$ and that there is $k$ between $1$ and $n{-}1$ such
that $z_n = y_k$.  Moreover, since no more than $n{-}2$ of the first $n{-}1$ simplices $z_j$ are distinct,
no more than $n{-}2$ of the simplices $y_j$ are distinct.  Hence, we obtain the description of 
the term at the top of the third column.  Additionally we obtain the fact that the induced map
\begin{equation*}
\xy \UCMT \xymatrix{
  \bigl(\phi_{n-1} \widetilde{P}^{n-1}_{n-2}Y{\esm}Y \bigr) 
              \cup_{\bigl(\Delta_{n-1}^n \phi_{n-1}\widetilde{P}^{n-1}_{n-2}Y \bigr)} 
                       \bigl( \Delta_{n-1}^n\phi_{n-1}\widetilde{P}^{n-1}Y \bigr) 
                           \ar@{ >->}[r] &
                                           \phi_{n-1}\widetilde{P}^{n-1}Y{\esm}Y }       
\endxy
\end{equation*}
is a cofibration, so lemma \ref{iteratedcolimlemma} applies to diagram \eqref{diagram2recursion1}. 

One takes the row-wise pushout of the three 
columns on the right and obtains the following cofibration sequence in $\cR_f(X^n)$.
\begin{equation*}
\xy \UCMT \xymatrix@C+=7ex{
 \widetilde{\Delta}_{n-1}^n \tilde{\phi}_{n-1}\widetilde{\theta}^{n-1} Y  \ar@{ >->}[r]^{\widetilde{\Delta}_{n-1}^n}
                    &   \tilde{\phi}_{n-1}\widetilde{\theta}^{n-1} Y \esm \theta^1 Y  \ar@{->>}[r] 
                           & \tilde{\phi}_n\widetilde{\theta}^n Y, }
\endxy
\end{equation*}
which is \eqref{unreducedcofibseq} from the statement. 

One also compose the arrows pointing to the left in each row and take the row-wise pushout of the resulting  diagram,
which consists of columns one, three, and four of the diagram 
\ref{diagram2recursion1}, 
obtaining
\begin{equation*}
  \xy \UCMT \xymatrix@C=10ex{
   \Delta_{n-1}^n\tilde{\phi}_{n-1}\tilde{\theta}^{n-1} Y  \ar@{ >->}[r]^{\mu_*\widetilde{\Delta}_{n-1}^n}
                    &   \phi_{n-1}{\theta}^{n-1} Y \sma \theta^1 Y  \ar@{->>}[r] 
                           & \phi_n{\theta}^n Y, }
\endxy
\end{equation*}
which is the second cofibration sequence \eqref{reducedcofibseq}
 in the statement. \end{proof}

We want to apply the cofibration sequence \eqref{reducedcofibseq} to evaluate the composite $\phi_n\theta^n$ on 
a homotopy class in $\pi_jA(X)$, where the basepoint is taken in the zero component.  Two features of algebraic $K$-theory
make this possible.  The first feature is essentially a consequence of the additivity theorem and says that cofibration
sequences imply additive relations. 
\begin{lemma} \label{additivity}
Let  $Z$ be a space. The two composite maps
\begin{equation*}
\xy  \UCMT \xymatrix{
|hS_2 \cR_f(Z)| \ar@<0.5ex>[r]^{t} \ar@<-0.5ex>[r]_{s \wed q}  
                                        &  |h \cR_f(Z) |  \ar[r] 
                                                                &  \Loops|h S_{\bullet}\cR(Z)|}
\endxy
\end{equation*}
are homotopic, where the right hand arrow is the canonical map
\begin{equation*}
|h \cR_f(Z) | \ra \Loops |hS_{\bullet} \cR_f(Z)|.  \qed
\end{equation*}
\end{lemma}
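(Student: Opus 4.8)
The plan is to recognize this as a direct instance of Waldhausen's additivity theorem. First I would unwind the three functors involved. An object of $hS_2\cR_f(Z)$ is a cofibration sequence $A_1 \rightarrowtail A_2 \twoheadrightarrow A_2/A_1$ (with chosen subobject and quotient), and there is a tautological cofibration sequence of exact functors $s \rightarrowtail t \twoheadrightarrow q \colon S_2\cR_f(Z) \to \cR_f(Z)$ whose values on such an object are the subobject $A_1$, the total object $A_2$, and the quotient $A_2/A_1$, respectively. Thus the two composites in the statement are precisely the maps induced by $t$ and by the categorical sum $s \wed q$, each followed by the canonical map $|h\cR_f(Z)| \to \Loops|hS_{\bullet}\cR_f(Z)|$.

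Next I would record the structural fact that $\Loops|hS_{\bullet}\cR_f(Z)|$ is a grouplike $H$-space (indeed an infinite loop space), whose addition, pulled back along the canonical map $|h\cR_f(Z)| \to \Loops|hS_{\bullet}\cR_f(Z)|$, is the one induced by the categorical sum $\wed$ on $\cR_f(Z)$; this is built into the $S_{\bullet}$-delooping of the $K$-theory space. In particular, the map induced by $s \wed q$ followed by the canonical map agrees up to homotopy with the $H$-space sum of the maps induced separately by $s$ and by $q$.

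Finally I would invoke the additivity theorem \cite[Theorem 1.4.2]{Waldhausen85}, which applies verbatim because $\cR_f(Z)$ is a category with cofibrations and weak equivalences of exactly the kind treated there (its structure having been recorded in Section \ref{General}). Applied to the tautological sequence $s \rightarrowtail t \twoheadrightarrow q$, additivity yields that after passage to $\Loops|hS_{\bullet}\cR_f(Z)|$ the map induced by the total object $t$ is homotopic to the $H$-space sum of the maps induced by $s$ and $q$. Combining this with the identification of the $H$-space sum with $\wed$ from the previous step gives that $t$ and $s \wed q$ induce homotopic maps into $\Loops|hS_{\bullet}\cR_f(Z)|$, which is the assertion.

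The main obstacle is bookkeeping rather than any genuinely new argument: one must match the tautological $S_2$-sequence with the input of the additivity theorem and, more delicately, confirm that the $H$-space addition furnished by the delooping is implemented by the categorical sum $\wed$, so that the additive relation $t \simeq s + q$ may be legitimately rewritten as $t \simeq s \wed q$. No conditions particular to the present setting need to be re-verified, since $\cR_f(Z)$ is an honest Waldhausen category to which the cited theorem applies directly.
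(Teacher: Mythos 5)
Your proof is correct and is exactly the route the paper intends: the lemma is stated with a \qed precisely because it is the standard consequence of Waldhausen's additivity theorem, applied to the tautological cofibration sequence of exact functors $s \rightarrowtail t \twoheadrightarrow q$ on $S_2\cR_f(Z)$, together with the identification of the $H$-space addition on $\Loops|hS_{\bullet}\cR_f(Z)|$ with the one induced by the categorical sum $\wdX$ (here $\wed$ over $Z$). Your bookkeeping steps --- matching the universal $S_2$-sequence to the additivity input and confirming $\cR_f(Z)$ is a Waldhausen category, as recorded in section \ref{General} --- are exactly what the paper takes for granted.
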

The second feature is the triviality of products in higher homotopy groups, explained as follows.
Since $X$ is a simplicial abelian group, 
the homotopy functor $Y \mapsto [Y, A(X)]$ 
has a ring structure induced from the bi-exact pairing
\begin{equation*}
  \cR(X)  \times \cR(X) \stackrel{\esm}{\lra} \cR(X{\times}X) \stackrel{\mu_*}{\lra} \cR(X).
\end{equation*}
Now suppose $Y=\Sigma Y'$ is a suspension.  Under this ring structure the product of two elements 
$[f_1]$ and $[f_2]$ in $[Y, A(X)]$ is zero, 
because $[f_1]$ may be represented by a map taking the upper cone $C_+Y'$ in $\Sigma Y'$ 
to the point in $A(X)$ represented by the zero element in $\cR_f(X)$,
while $[f_2]$ is represented by a map taking the lower cone $C_-Y'$ in $\Sigma Y'$ to the zero element.
In a similar manner, there are pairings
\begin{equation*}
  \cR(X^{n-1})  \times \cR(X) \stackrel{\esm}{\lra} \cR(X^{n-1}{\times}X) = \cR(X^n)
\end{equation*}
and these are also zero on higher homotopy groups.
Combining these observations means we have a chance to compute 
by induction the action of $\phi_n\theta^{n}$ on higher homotopy groups,
because at each stage of the induction the middle term of the relevant cofibration 
contributes nothing to the final answer. 

To start the induction,  we  compute $(\phi_2\theta^2)_*[f]$ for 
$f \colon S^j \ra A(X)$.  Applying the additivity theorem to the 
cofibration sequence \eqref{reducedcofibseq}, we can write
\begin{equation*}
  (\phi_2\theta^2)_*[f] = (\theta^1_*[f] \sma \theta^1_*[f]) - ({\Delta}^2_1\theta^1)_*[f]
\end{equation*}
For the first term on the right side of the equation, we have  observed that this product
is zero.  So we first obtain
\begin{equation} \label{firststep}
  (\phi_2\theta^2)_*[f] = - ({\Delta}^2_1\theta^1)_*[f]
  \end{equation}
 We analyse this expression as follows. First, $\phi_1$ and $\theta^1$ are identity functors. 
For $n=2$, there is one diagonal map $\delta_{1}^{2,1} \colon Z \ra \delta_1^{2,1} Z$ 
so $\widetilde{\Delta}_1^2\phi_1{\theta}^1Y = \delta_{1*}^{2,1}\phi_1{\theta}^1(Y)= \delta_{1*}^{2,1}Y$.
Then $\Delta_1^2\phi_1\theta^1 = \mu_* \com \widetilde{\Delta}_1^2\phi_1{\theta}^1 = \mu_* \com \delta_{1*}^{2,1}$,
and the point is to see what is happening with the retraction $r \colon Y \ra X$. 
Applying formula \eqref{diagonalformula2},  the composition
\begin{equation*}
  \mu \com (\widetilde{\Delta}^2_1r) \com i_1^{2,1}(y) = \mu( r(y), r(y) ) = \bigl( r(y) \bigr)^2 = (\tau_2 \com r)(y),   
\end{equation*}
where $\tau^2 \colon X \ra X$ is the squaring homomorphism.
That is, the action of $\Delta^2_1 = \mu_*\widetilde{\Delta^2_1}$ on homotopy is
the same as the action on homotopy induced by the squaring homomorphism $\tau^2$. 
Consequently,
\begin{equation*}
  (\phi_2\theta^2)_*[f] = - \tau^2_*[f].
\end{equation*}
The general result is
\begin{theorem}
  \label{generalcalculation}
Let $\tau^n \colon X \ra X$ be the homomorphism that raises elements to the
$n$th power, thinking of the operation in $X$ as multiplication.  Then
\begin{equation*}
  \phi_n \theta^n_* = (-1)^{n-1} \cdot (n{-}1)! \cdot \tau^n_*
  \colon \pi_jA(X) \ra \pi_jA(X)
\end{equation*}
for $j >0$. 
\end{theorem}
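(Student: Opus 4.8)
The plan is to prove the formula by induction on $n$, with inductive hypothesis $(\phi_{n-1}\theta^{n-1})_* = (-1)^{n-2}(n{-}2)!\,\tau^{n-1}_*$ on $\pi_jA(X)$, $j>0$; the base case $n=2$ is exactly the computation ending in \eqref{firststep}. For the inductive step I would apply the additivity theorem (lemma \ref{additivity}) to the cofibration sequence \eqref{reducedcofibseq} of proposition \ref{recursion1}. Additivity turns that sequence into the relation
\[
(\phi_n\theta^n)_*[f] = (\phi_{n-1}\theta^{n-1}Y \sma \theta^1 Y)_*[f] - (\Delta_{n-1}^n\tilde{\phi}_{n-1}\tilde{\theta}^{n-1})_*[f]
\]
for $f\colon S^j \to A(X)$. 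Because $\theta^1 = \mathrm{id}$, the first summand is the $\sma$-product of the two classes $(\phi_{n-1}\theta^{n-1})_*[f]$ and $[f]$, both pulled back from the suspension $S^j$ with $j>0$; by the triviality of products on higher homotopy groups established after lemma \ref{additivity}, this term vanishes. Hence $(\phi_n\theta^n)_* = -(\Delta_{n-1}^n\tilde{\phi}_{n-1}\tilde{\theta}^{n-1})_*$.

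Write $W = \tilde{\phi}_{n-1}\tilde{\theta}^{n-1}Y \in \cR_f(X^{n-1})$. By construction $\Delta_{n-1}^n W = \mu_*\delta_{n-1*}^{n,1}W \wdX \cdots \wdX \mu_*\delta_{n-1*}^{n,n-1}W$ is a categorical sum along $X$ of $n{-}1$ summands, so on $K$-theory it contributes $\sum_{k=1}^{n-1}(\mu_*\delta_{n-1*}^{n,k}\tilde{\phi}_{n-1}\tilde{\theta}^{n-1})_*[f]$. The summands differ only by a permutation of coordinates, and permutations of factors induce self-maps homotopic to the identity (the remark in section \ref{EinftyStructure}), so all $n{-}1$ summands agree on $\pi_j$; it suffices to evaluate one. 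For this I would invoke the retraction formula \eqref{diagonalformula2}: the $k$-th summand has the same cells outside $X$ as $\phi_{n-1}\theta^{n-1}Y = \mu_* W$, but its retraction carries the extra factor $r(y_k)$. On a spherical representative of $[f]$ every coordinate realizes the same generating class, so this single extra factor upgrades the $(n{-}1)$-fold power to the $n$-fold power, replacing $\tau^{n-1}_*$ by $\tau^n_*$ while leaving the combinatorial coefficient unchanged; by the inductive hypothesis each summand therefore equals $(-1)^{n-2}(n{-}2)!\,\tau^n_*[f]$. Summing the $n{-}1$ equal summands and inserting the sign yields $(\phi_n\theta^n)_* = -(n{-}1)(-1)^{n-2}(n{-}2)!\,\tau^n_* = (-1)^{n-1}(n{-}1)!\,\tau^n_*$, completing the induction.

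The step I expect to be the main obstacle is the identification of a single diagonal summand, namely making rigorous the assertion that multiplying the retraction by one extra coordinate converts $\tau^{n-1}_*$ into $\tau^n_*$. I would handle it by fixing explicit spherical objects $(Y,r,s)$ with attaching maps constant at the identity of $X$ and tracking the retraction through $\mu_*\delta_{n-1*}^{n,k}$ using \eqref{diagonalformula2}, noting that the underlying functor on cells coincides with the one computing $\phi_{n-1}\theta^{n-1}$, so that only the base translation recorded by the retraction changes. The delicate point, absent in Waldhausen's one-point case (where $\tau^n = \mathrm{id}$ and one recovers only the coefficient $(-1)^{n-1}(n{-}1)!$), is that $\tau^n$ is a genuinely new endomorphism of $X$, so the argument must respect the simplicial abelian group structure; the vanishing of higher products on $\pi_j$ for $j>0$ is what guarantees that no cross-terms between distinct coordinates survive and that the clean power map remains.
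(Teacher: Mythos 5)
Your overall strategy---additivity applied to the cofibration sequence of proposition \ref{recursion1}, vanishing of the smash pairing on higher homotopy, and a count of diagonal summands---is the paper's strategy, and your base case and combinatorial bookkeeping $(n{-}1)\cdot(n{-}2)! = (n{-}1)!$ are consistent with the paper's answer. But the step you yourself flag as the main obstacle is a genuine gap, and it cannot be repaired in the form you propose. Your inductive hypothesis is the \emph{reduced} statement $(\phi_{n-1}\theta^{n-1})_* = (\mu^{(n-1)}_*\tilde{\phi}_{n-1}\tilde{\theta}^{n-1})_* = (-1)^{n-2}(n{-}2)!\,\tau^{n-1}_*$, i.e., information about the composite \emph{after} the $(n{-}1)$-fold multiplication $\mu^{(n-1)}_* \colon \cR_f(X^{n-1}) \ra \cR_f(X)$ has been applied. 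The summand you must evaluate, however, is $\mu^{(n)}_*\com\delta^{n,k}_{n-1*}\com\tilde{\phi}_{n-1}\tilde{\theta}^{n-1}$, where $\delta^{n,k}_{n-1*}$ acts on $\cR_f(X^{n-1})$, \emph{upstream} of the multiplication. The homomorphism $\mu^{(n)}\com\delta^{n,k}_{n-1}\colon (x_1,\ldots,x_{n-1}) \mapsto x_1\cdots x_{n-1}\cdot x_k$ does not factor through $\mu^{(n-1)}$ (knowing the product $x_1\cdots x_{n-1}$ does not determine $x_1\cdots x_{n-1}x_k$), so there is no exact functor $g$ with $\mu^{(n)}_*\delta^{n,k}_{n-1*} = g_*\com\mu^{(n-1)}_*$, and since $(\mu^{(n-1)})_*$ is not injective on homotopy, knowing $(\mu^{(n-1)}_*\tilde{\phi}_{n-1}\tilde{\theta}^{n-1})_*$ determines nothing about $(\mu^{(n)}_*\delta^{n,k}_{n-1*}\tilde{\phi}_{n-1}\tilde{\theta}^{n-1})_*$. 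Your proposed fix---tracking explicit spherical objects and ``upgrading'' one retraction factor---does not close this: a class in $\pi_jA(X)$, $j>0$, is a map $S^j \ra A(X)$, not a single object, so what is needed is an identification of the map $(\tilde{\phi}_{n-1}\tilde{\theta}^{n-1})_*\colon \pi_jA(X) \ra \pi_jA(X^{n-1})$ itself, and the phrase ``every coordinate realizes the same generating class'' has no functor-level content that would license replacing $\tau^{n-1}_*$ by $\tau^n_*$ inside the reduced hypothesis.

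The paper's proof avoids exactly this trap by running the recursion at the \emph{unreduced} level: after one application of \eqref{reducedcofibseq} it repeatedly applies the unreduced sequence \eqref{unreducedcofibseq} (whose middle term involves the pairing $\cR(X^{p-1})\times\cR(X) \stackrel{\esm}{\ra} \cR(X^p)$, also trivial on $\pi_j$ for $j>0$), obtaining the $X^{p}$-level identity
\begin{equation*}
  (\phi_n\theta^n)_* = (-1)^{n-1}\,\bigl(\Delta^n_{n-1}\widetilde{\Delta}^{n-1}_{n-2}\cdots\widetilde{\Delta}^2_1\bigr)_*
\end{equation*}
with $\mu_*$ applied only once, at the very end. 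The composite $\widetilde{\Delta}^n_{n-1}\cdots\widetilde{\Delta}^2_1$ decomposes as a sum along the base of the $(n{-}1)!$ composites $\delta^{n,k_{n-1}}_{n-1*}\com\cdots\com\delta^{2,1}_{1*}$, and each of these sends $(Y,r,s)$ to $(Y\cup_X X^n, r^n, s)$ where, by iterating \eqref{diagonalformula1}, all $n$ components of $r^n$ equal $r$; applying $\mu_*$ then literally produces the retraction $\tau^n\com r$, giving $\tau^n_*$ per summand with no inductive transfer of information through $\mu_*$ required. To salvage your induction, strengthen the hypothesis to the unreduced statement $(\tilde{\phi}_p\tilde{\theta}^p)_* = (-1)^{p-1}(\widetilde{\Delta}^p_{p-1}\cdots\widetilde{\Delta}^2_1)_* \colon \pi_jA(X) \ra \pi_jA(X^p)$; with that, your decomposition into $n{-}1$ summands and the final count go through, and you recover the paper's argument. (A minor secondary point: your appeal to the remark in section \ref{EinftyStructure} about permutations being homotopic to the identity concerns the stabilized equivariant $G_{\bullet}^{\infty}$-models, not the non-equivariant functors here; the symmetry of the summands should instead be argued directly from the $\Sigma_{n-1}$-symmetry of $\tilde{\phi}_{n-1}\tilde{\theta}^{n-1}$ and commutativity of $X$---though with the unreduced induction it becomes unnecessary, since every one of the $(n{-}1)!$ composites is evaluated individually.)
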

\begin{proof}
First we observe that on higher homotopy groups
\begin{equation*}
    (\phi_n \theta^n)_* = (-1)^{n-1} \cdot (\Delta_{n-1}^n\widetilde{\Delta}_{n-2}^{n-1} \cdots \widetilde{\Delta}_{1}^2)_*
\end{equation*}
An application of the cofibration sequence \eqref{reducedcofibseq} and the vanishing product principle gives 
$ (\phi_n \theta^n)_* = (-1) \cdot (\Delta_{n-1}^n\tilde{\phi}_{n-1}{\tilde{\theta}}^{n-1})_*$.
Then one continues with applications of the cofibration sequence \eqref{unreducedcofibseq} and the vanishing pairing principle.
\begin{multline*}
   (\phi_n \theta^n)_* = 
              (-1)^{2} \cdot (\Delta_{n-1}^n\widetilde{\Delta}_{n-2}^{n-1}\tilde{\phi}_{n-2}{\tilde{\theta}}^{n-2})_* = \cdots 
\\
         =   (-1)^{n-1} \cdot (\Delta_{n-1}^n\widetilde{\Delta}_{n-2}^{n-1} \cdots \widetilde{\Delta}_{1}^2)_*
         =   (-1)^{n-1} \cdot (\mu_* \widetilde{\Delta}_{n-1}^n\widetilde{\Delta}_{n-2}^{n-1} \cdots \widetilde{\Delta}_{1}^2)_*
 ,
\end{multline*}
recalling that $\tilde{\phi}_1$ and $\tilde{\theta}^1$ are identity functors.

Since the functors $\widetilde{\Delta}_{p-1}^p$ are built by unions from functors $\tilde{\delta}^{p,k}_{p-1 *}$ we have to analyse
composites
\begin{equation*}
  \delta^{n, k_{n-1}}_{n-1 *} \com \delta^{n-1, k_{n-2}}_{n-2 *} \com \cdots \delta^{2,1}_{1*} \colon \cR_f(X) \ra \cR_f(X^n)
\end{equation*}
for all choices of indices $1 \leq k_{n-1} \leq n{-}1$, $1 \leq k_{n-2} \leq n{-}2$, \ldots, $1 \leq k_2 \leq 2$.
On $(Y,r,s)$ the value of the chain is $(Y \cup_X X^n, r^n, s)$, 
where the retraction $r^n \colon  Y \ra X^n$ is evaluated by repeated application of formula \eqref{diagonalformula1}.
When we apply $\mu_*$ to this object, the value on $(Y,r,s)$ is seen to be $(Y, \tau^n \com r, s)$.
Finally, we identify the numerical coefficient $(n{-}1)!$ by counting the number of terms in the composites
$\widetilde{\Delta}_{n-1}^n\widetilde{\Delta}_{n-2}^{n-1} \cdots \widetilde{\Delta}_{1}^2$ according to the description above.
\end{proof}
\bibliographystyle{plain}
\bibliography{kahnpriddylist}

\begin{thebibliography}{10}

\bibitem{Betley86}
Stanis{\l}aw Betley.
\newblock On the homotopy groups of {$A(X)$}.
\newblock {\em Proc. Amer. Math. Soc.}, 98(3):495--498, 1986.

\bibitem{GraysonOps}
Daniel~R. Grayson.
\newblock Exterior power operations on higher {$K$}-theory.
\newblock {\em $K$-Theory}, 3(3):247--260, 1989.

\bibitem{GKMNilOps}
Joachim Grunewald, John~R. Klein, and Tibor Macko.
\newblock Operations on the {A}-theoretic nil-terms.
\newblock {\em J. Topol.}, 1(2):317--341, 2008.

\bibitem{GSVWKthy}
T.~Gunnarsson, R.~Schw{\"a}nzl, R.~M. Vogt, and F.~Waldhausen.
\newblock An un-delooped version of algebraic {$K$}-theory.
\newblock {\em J. Pure Appl. Algebra}, 79(3):255--270, 1992.

\bibitem{GSAthyops}
Thomas Gunnarsson and Roland Schw{\"a}nzl.
\newblock Operations in {$A$}-theory.
\newblock {\em J. Pure Appl. Algebra}, 174(3):263--301, 2002.

\bibitem{KWFT}
John~R. Klein and E.~Bruce Williams.
\newblock The ``fundamental theorem'' for the algebraic {$K$}-theory of spaces.
  {III}. {T}he nil-term.
\newblock {\em Proc. Amer. Math. Soc.}, 136(9):3025--3033, 2008.

\bibitem{LaPlazaCoh1}
Miguel~L. Laplaza.
\newblock Coherence for distributivity.
\newblock In {\em Coherence in categories}, pages 29--65. Lecture Notes in
  Math., Vol. 281. Springer, Berlin, 1972.

\bibitem{MaySigurdssonPHT}
J.~P. May and J.~Sigurdsson.
\newblock {\em Parametrized homotopy theory}, volume 132 of {\em Mathematical
  Surveys and Monographs}.
\newblock American Mathematical Society, Providence, RI, 2006.

\bibitem{SegalGamma}
Graeme Segal.
\newblock Categories and cohomology theories.
\newblock {\em Topology}, 13:293--312, 1974.

\bibitem{Segalops}
Graeme Segal.
\newblock Operations in stable homotopy theory.
\newblock In {\em New developments in topology ({P}roc. {S}ympos. {A}lgebraic
  {T}opology, {O}xford, 1972)}, pages 105--110. London Math Soc. Lecture Note
  Ser., No. 11. Cambridge Univ. Press, London, 1974.

\bibitem{Thomasonhocolim}
R.~W. Thomason.
\newblock Homotopy colimits in the category of small categories.
\newblock {\em Math. Proc. Cambridge Philos. Soc.}, 85(1):91--109, 1979.

\bibitem{Waldhausen82}
Friedhelm Waldhausen.
\newblock Operations in the algebraic {$K$}-theory of spaces.
\newblock In {\em Algebraic {$K$}-theory, {P}art {II} ({O}berwolfach, 1980)},
  volume 967 of {\em Lecture Notes in Math.}, pages 390--409. Springer, Berlin,
  1982.

\bibitem{Waldhausen85}
Friedhelm Waldhausen.
\newblock Algebraic {$K$}-theory of spaces.
\newblock In {\em Algebraic and geometric topology ({N}ew {B}runswick,
  {N}.{J}., 1983)}, volume 1126 of {\em Lecture Notes in Math.}, pages
  318--419. Springer, Berlin, 1985.

\bibitem{Waldhausen87}
Friedhelm Waldhausen.
\newblock Algebraic {$K$}-theory of spaces, concordance, and stable homotopy
  theory.
\newblock In {\em Algebraic topology and algebraic {$K$}-theory ({P}rinceton,
  {N}.{J}., 1983)}, volume 113 of {\em Ann. of Math. Stud.}, pages 392--417.
  Princeton Univ. Press, Princeton, NJ, 1987.

\bibitem{PLmfds}
Friedhelm Waldhausen, Bj{\o}rn Jahren, and John Rognes.
\newblock {\em Spaces of {PL} manifolds and categories of simple maps}, volume
  186 of {\em Annals of Mathematics Studies}.
\newblock Princeton University Press, Princeton, NJ, 2013.

\end{thebibliography}
\end{document}